\DeclareMathAlphabet{\mathpzc}{OT1}{pzc}{m}{it}
\DeclareSymbolFont{fouriersymbols}{FMS}{futm}{m}{n}
\DeclareSymbolFont{fourierlargesymbols}{FMX}{futm}{m}{n}
\DeclareMathDelimiter{\VERT}{\mathord}{fouriersymbols}{152}{fourierlargesymbols}{147}
\newcommand{\T}{\mathscr{T}}
\newcommand{\Sides}{\mathscr{S}}
\newcommand{\TheTitle}{A posteriori error estimates for semilinear optimal control problems}
\newcommand{\ShortTitle}{Error estimates for a semilinear optimal control problem}
\newcommand{\TheAuthors}{A. Allendes, F. Fuica, E. Ot\'arola, D. Quero}
\headers{\ShortTitle}{\TheAuthors}
\title{{\TheTitle}\thanks{AA is partially supported by CONICYT through FONDECYT project 1170579. EO is partially supported by CONICYT through FONDECYT Project 11180193.}}
\author{Alejandro Allendes\thanks{Departamento de Matem\'atica, Universidad T\'ecnica Federico Santa Mar\'ia, Valpara\'iso, Chile.
(\email{alejandro.allendes@usm.cl}).}
\and
Francisco Fuica\thanks{Departamento de Matem\'atica, Universidad T\'ecnica Federico Santa Mar\'ia, Valpara\'iso, Chile.
(\email{francisco.fuica@sansano.usm.cl}).}
\and
Enrique Ot\'arola\thanks{Departamento de Matem\'atica, Universidad T\'ecnica Federico Santa Mar\'ia, Valpara\'iso, Chile.
(\email{enrique.otarola@usm.cl}, \url{http://eotarola.mat.utfsm.cl/}).}
\and
Daniel Quero\thanks{Departamento de Matem\'atica, Universidad T\'ecnica Federico Santa Mar\'ia, Valpara\'iso, Chile.
(\email{daniel.quero@alumnos.usm.cl}).}}
\date{Draft version of \today.}
\begin{document}

\maketitle

\begin{abstract}
We devise and analyze a reliable and efficient a posteriori error estimator for a semilinear control--constrained optimal control problem in two and three dimensional Lipschitz, but not necessarily convex, polytopal domains. We consider a fully discrete scheme that discretizes the state and adjoint equations with piecewise linear functions and the control variable with piecewise constant functions. The devised error estimator can be decomposed as the sum of three contributions which are associated to the discretization of the state and adjoint equations and the control variable. We extend our results to a scheme that approximates the control variable with piecewise linear functions and also to a scheme that approximates a nondifferentiable optimal control problem. We illustrate the theory with two and three--dimensional numerical examples.
\end{abstract}

\begin{keywords}
optimal control problems, semilinear equations, finite element approximations, a posteriori error estimates.
\end{keywords}

\begin{AMS}
35J61,         
49J20,   	   
49M25,		   
65N15,         
65N30.         
\end{AMS}

\section{Introduction}\label{sec:Intro}

In this work we will be interested in the design and analysis of a posteriori error estimates for finite element approximations of a semilinear control--constrained optimal control problem: the state equation corresponds to a Dirichlet problem for a monotone, semilinear, and elliptic partial differential equation (PDE). To describe our control problem, for $d\in\{2,3\}$, we let $\Omega\subset\mathbb{R}^{d}$ be an open and bounded polytopal domain with Lipschitz boundary $\partial\Omega$. Notice that we do not assume that $\Omega$ is convex. Given a regularization parameter $\nu>0$ and a desired state $y_\Omega\in L^2(\Omega)$, we define the cost functional
\begin{equation}\label{def:cost_func}
J(y, u):=\frac{1}{2} \| y - y_\Omega \|^{2}_{L^{2}(\Omega)} + \frac{\nu}{2}\|u\|_{L^2(\Omega)}^2.
\end{equation} 
With these ingredients at hand, we define the \emph{semilinear elliptic optimal control problem} as: Find $\min J(y,u)$ subject to the \emph{monotone, semilinear, and elliptic PDE}
\begin{equation}\label{def:state_eq}
-\Delta y + a(\cdot,y)  =  u  \text{ in }  \Omega, \qquad
y  =  0  \text{ on }  \partial\Omega,
\end{equation}
and the \emph{control constraints}
\begin{equation}\label{def:box_constraints}
u \in \mathbb{U}_{ad},\quad \mathbb{U}_{ad}:=\{v \in L^2(\Omega):  \texttt{a} \leq v(x) \leq \texttt{b} \text{ a.e. } x \in \Omega \};
\end{equation}
the control bounds $\texttt{a},\texttt{b} \in \mathbb{R}$ are such that $\texttt{a} < \texttt{b}$. Assumptions on the function $a$ will be deferred until section \ref{sec:assumption}.

The analysis of error estimates for finite element approximations of semilinear optimal control problems has previously been considered in a number of works. The article \cite{MR1937089} appears to
be the first to provide error estimates for the distributed optimal control problem \eqref{def:cost_func}--\eqref{def:box_constraints}; notice that control constraints are considered. The authors of this work propose a fully discrete scheme on quasi--uniform meshes that discretizes the control variable with piecewise constant functions; piecewise linear functions are used for the discretization of the state and adjoint variables. In two and three dimensions and under the assumptions that $\Omega$ is convex, $\partial \Omega$ is of class $C^{1,1}$, and that the mesh--size is sufficiently small, the authors derive a priori error estimates for the approximation of the optimal control variable in the $L^2(\Omega)$-norm \cite[Theorem 5.1]{MR1937089} and the $L^{\infty}(\Omega)$-norm \cite[Theorem 5.2]{MR1937089}; the ones derived in the $L^2(\Omega)$-norm being optimal in terms of approximation. The analysis performed in \cite{MR1937089} was later extended in \cite{MR2350349} to a scheme that approximates the control variable with piecewise linear functions. The main result of this work reads as follows: $ h_{\T}^{-1} \| \bar u - \bar u_{\T} \|_{L^2(\Omega)} \rightarrow 0$ as $h_{\T} \downarrow 0$ \cite[Theorem 4.1]{MR2350349}, where $\bar u_{\T}$ denotes the corresponding finite element approximation of the optimal control variable $\bar u$. Under a suitable assumption, this result was later improved to 
\[
\| \bar u - \bar u_{\T} \|_{L^2(\Omega)}  \lesssim h_{\T}^{3/2};
\] 
see \cite[section 10]{MR3586845}. We conclude by providing a non-exhaustive list of extensions available in the literature: boundary optimal control \cite{MR2150243}, sparse optimal control \cite{MR3023751}, Dirichlet boundary optimal control \cite{MR2272157}, and state constrained optimal control \cite{MR2009948}.

While it is fair to say that the study of a priori error estimates for finite element solution techniques of semilinear optimal control problems is matured and well understood, the analysis of a posteriori error estimates is far from complete. An a posteriori error estimator is a computable quantity that depends on the discrete solution and data and is of primary importance in computational practice because of its ability to provide computable information about errors and drive the so-called adaptive finite element methods (AFEMs). The a posteriori error analysis for linear second--order elliptic boundary value problems and the construction of AFEMs and their convergence and optimal complexity have attained a mature understanding \cite{MR1885308,MR3076038,MR3059294}. To the best of our knowledge, the first work that provided an advance regarding a posteriori error estimates for linear and distributed optimal control problems is \cite{MR1887737}: the devised residual--type a posteriori error estimator is proven to yield an upper bound for the error \cite[Theorem 3.1]{MR1887737}. These results were later improved
in \cite{MR2434065} where the authors explore a slight modification of the estimator of \cite{MR1887737} and prove upper and lower error bounds which include oscillation terms \cite[Theorems 5.1 and 6.1]{MR2434065}. Recently, these ideas were unified in \cite{MR3212590}. In contrast to these advances the a posteriori error analysis for nonlinear optimal control problems is not as developed. To the best of our knowledge, the first work that provides an advance on this matter is \cite{MR2024491}. In this work the authors derive a posteriori error estimates for such a class of problems on Lipschitz domains and for nonlinear terms $a$ which are such that
\[
\partial a / \partial y(\cdot,y)\in W^{1,\infty}(-R,R), R > 0, 
\quad
a(\cdot,y)\in L^2(\Omega), y \in H^1(\Omega), 
\quad 
\partial a / \partial y \geq 0.
\] 
Under the assumption that estimate \eqref{eq:inequality_control_tilde} holds, the authors devise an error estimator that yields an upper bound for the corresponding error on the $H^1(\Omega)\times H^1(\Omega)\times L^2(\Omega)$--norm \cite[Theorem 3.1]{MR2024491}. We notice that no efficiency analysis is provided in \cite{MR2024491}. We conclude this paragraph by mentioning the approach introduced in \cite{MR1780911} for estimating the error in terms of the cost functional for linear/semilinear optimal control problems. This approach was later extended to problems with control constraints in \cite{MR2421327,MR2373479} and state constraints in \cite{MR2556843}.

In this work, we propose an a posteriori error estimator for the optimal control problem \eqref{def:cost_func}--\eqref{def:box_constraints} that can be decomposed as the sum of three contributions: one related to the discretization of the state equation, one associated to the discretization of the adjoint equation, and another one that accounts for the discretization of the control variable. This error estimator is different to the one provided in \cite{MR2024491}. On two and three dimensional Lipschitz polytopes, we obtain global reliability and efficiency properties. On the basis of the devised error estimator, we also design a simple adaptive strategy that exhibits, for the examples that we present, optimal experimental rates of convergence for all the optimal variables. We also provide numerical evidence that support the claim that our estimator outperforms the one in \cite{MR2024491}; see section \ref{sec:numer_results}. A few extensions of our theory are briefly discussed: piecewise linear approximation of the optimal control and sparse PDE-constrained optimization.

The outline of this paper is as follows. In section \ref{sec:nota_and_assum} we set notation and assumptions employed in the rest of the work. In section \ref{sec:semi_prob} we review preliminary results about solutions to \eqref{def:state_eq}. Basic results for the optimal control problem \eqref{def:cost_func}--\eqref{def:box_constraints} as well as first and second order optimality conditions are reviewed in section \ref{sec:pt_ocp}. The core of our work are sections \ref{sec:reliability} and \ref{sec:eff_estimator}, where we design an a posteriori error estimator for a suitable finite element discretization and show, in sections \ref{sec:reliability} and  \ref{sec:eff_estimator}, its reliability and efficiency, respectively. In section \ref{sec:extensions} we present a few extensions of the theory developed in previous sections. Finally, numerical examples presented in section \ref{sec:numer_results} illustrate the theory and reveal a competitive performance of the devised error estimator.


\section{Notation and assumptions}\label{sec:nota_and_assum}

Let us set notation and describe the setting we shall operate with.
\subsection{Notation}\label{sec:notation}

Throughout this work $d \in \{2,3\}$ and $\Omega\subset\mathbb{R}^d$ is an open and bounded polytopal domain with Lipschitz boundary $\partial\Omega$. Notice that we do not assume that $\Omega$ is convex. If $\mathscr{X}$ and $\mathscr{Y}$ are Banach function spaces, $\mathscr{X}\hookrightarrow \mathscr{Y}$ means that $\mathscr{X}$ is continuously embedded in $\mathscr{Y}$. We denote by $\mathscr{X}'$ and $\| \cdot \|_{\mathscr{X}}$ the dual and norm, respectively, of $\mathscr{X}$. The relation $\mathfrak{a} \lesssim \mathfrak{b}$ indicates that $\mathfrak{a} \leq C \mathfrak{b}$, with a positive constant that depends neither on $\mathfrak{a}$, $\mathfrak{b}$ nor the discretization parameter. The value of $C$ might change at each occurrence.

\subsection{Assumptions}\label{sec:assumption}

We assume that the nonlinear function $a$ involved in the monotone, semilinear, and elliptic PDE \eqref{def:state_eq} is such that:

\begin{enumerate}[label=(A.\arabic*)]
\item \label{A1} $a:\Omega\times \mathbb{R}\rightarrow  \mathbb{R}$ is a Carath\'eodory function of class $C^2$ with respect to the second variable and $a(\cdot,0)\in L^{2}(\Omega)$.

\item \label{A2} $\frac{\partial a}{\partial y}(x,y)\geq 0$ for a.e. $x\in\Omega$ and for all $y \in \mathbb{R}$.

\item \label{A3} For all $M>0$, there exists a positive constant $C_{M}$ such that 
\begin{equation*}
\sum_{i=1}^{2}\left|\frac{\partial^{i} a}{\partial y^{i} }(x,y)\right|\leq C_{M},
\end{equation*}
for a.e. $x\in \Omega$ and $|y|\leq M$.
\end{enumerate}

The following properties follow immediately from the previous assumptions. First, $a$ is monotone increasing in $y$ for a.e. $x \in \Omega$. In particular, for $v,w \in L^2(\Omega)$, we have
\begin{equation}\label{eq:monotone_operator}
(a(\cdot,v)-a(\cdot,w),v-w)_{L^2(\Omega)}\geq 0.
\end{equation}
Second, $a$ and $\frac{\partial a}{\partial y }$ are locally Lipschitz with respect to $y$, i.e., there exist positive constants $C_M$ and $L_M$ such that 
\begin{equation}\label{eq:Lipschitz_cont}
|a(x,v)-a(x,w)| \leq C_{M} |v-w|,
\qquad
\left|\frac{\partial a}{\partial y }(x,v)-\frac{\partial a}{\partial y }(x,w)\right|\leq L_{M} |v-w|,
\end{equation}
for a.e $x \in \Omega$ and $v,w \in \mathbb{R}$ such that $|v|,|w|\leq M$.


\section{Semilinear problem}\label{sec:semi_prob}
In this section, we review some of the main results related to the existence and uniqueness of solutions for problem \eqref{def:state_eq}. We also review a posteriori error estimates for a particular finite element setting.
\subsection{Weak formulation}\label{sec:weak_semilinear}
Given $f\in L^q(\Omega)$ with $q>d/2$, we consider the following weak problem: Find $y\in H_0^1(\Omega)$ such that
\begin{equation}\label{eq:weak_pde}
(\nabla y, \nabla v)_{L^2(\Omega)} + (a(\cdot,y),v)_{L^2(\Omega)} = (f,v)_{L^2(\Omega)}\quad \forall \: v \in H_0^1(\Omega).
\end{equation}

Invoking the main theorem on monotone operators \cite[Theorem 26.A]{MR1033498}, \cite[Theorem 2.18]{MR3014456} and an argument due to Stampacchia \cite{MR192177}, \cite[Theorem B.2]{MR1786735}, the following result can be derived; see \cite[Section 2]{MR3586845} and \cite[Theorem 4.8]{Troltzsch}.

\begin{theorem}[well--posedness]\label{thm:well_posedness_semilinear}
Let $f \in L^{q}(\Omega)$ with $q>d/2$. Let $a = a(x,y): \Omega \times \mathbb{R} \rightarrow \mathbb{R}$ be a Carath\'eodory function that is monotone increasing in $y$. If $a(\cdot,0) \in L^{q}(\Omega)$, with $q>d/2$, then, problem \eqref{eq:weak_pde} has a unique solution $y\in H_0^1(\Omega)\cap L^\infty(\Omega)$. In addition, we have the estimate
\begin{equation*}
\| \nabla y\|_{L^{2}(\Omega)} +\| y \|_{L^{\infty}(\Omega)}\lesssim \|f-a(\cdot,0)\|_{L^{q}(\Omega)},
\end{equation*}
with a hidden constant that is independent of $a$ and $f$.
\end{theorem}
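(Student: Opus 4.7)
The plan is to follow the two-stage strategy suggested by the references in the statement: the theory of monotone operators first produces a unique $H_0^1(\Omega)$ solution, and then a Stampacchia truncation argument promotes it to $L^\infty(\Omega)$ with the quantitative bound.

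\emph{Stage 1 (monotone operators).} I introduce the operator $A : H_0^1(\Omega) \to H^{-1}(\Omega)$ by $\langle Ay,v\rangle := (\nabla y,\nabla v)_{L^2(\Omega)} + (a(\cdot,y),v)_{L^2(\Omega)}$, so that \eqref{eq:weak_pde} is equivalent to $Ay=f$. To invoke \cite[Theorem 26.A]{MR1033498} I must verify boundedness, hemicontinuity, strict monotonicity, and coercivity of $A$. Monotonicity is precisely \eqref{eq:monotone_operator} and is made strict by the leading Laplacian. Coercivity comes from testing with $y$ itself, using the pointwise inequality $(a(\cdot,y)-a(\cdot,0))\,y \geq 0$ (a consequence of $a$ being non-decreasing in the second argument) together with Cauchy--Schwarz and Poincar\'e:
\[
\langle A y,y\rangle \geq \|\nabla y\|_{L^2(\Omega)}^2 - \|a(\cdot,0)\|_{L^2(\Omega)}\|y\|_{L^2(\Omega)}.
\]
Browder--Minty then yields a unique $y\in H_0^1(\Omega)$, and the test with $y$ itself gives a preliminary $H^1$ bound in terms of $\|f\|_{L^2(\Omega)}$ and $\|a(\cdot,0)\|_{L^2(\Omega)}$.

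\emph{Stage 2 (Stampacchia).} For $k\geq 0$ I test \eqref{eq:weak_pde} with $v=(y-k)^+\in H_0^1(\Omega)$. On $\{y>k\}$ we have $y>k\geq 0$, so monotonicity of $a$ gives $a(\cdot,y)\geq a(\cdot,0)$ there; dropping this nonnegative term produces
\[
\|\nabla (y-k)^+\|_{L^2(\Omega)}^2 \leq \int_\Omega \bigl(f-a(\cdot,0)\bigr)\,(y-k)^+\,dx.
\]
This is exactly the input to Stampacchia's classical iteration on super-level sets \cite{MR192177}: H\"older, Sobolev embedding, and the hypothesis $q>d/2$ close a nonlinear recursion on the measures $|\{y>k\}|$ and deliver $\|y^+\|_{L^\infty(\Omega)}\lesssim \|f-a(\cdot,0)\|_{L^q(\Omega)}$. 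Applying the same scheme to $-y$, which satisfies an analogous problem with the still-monotone, still-Carath\'eodory nonlinearity $\tilde a(x,s):=-a(x,-s)$ and right-hand side $-f$, controls $y^-$ with the same bound. Combining with the Stage~1 coercivity estimate yields the stated inequality.

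The main obstacle is that, without any a priori growth bound on $a$, the Nemytskii operator $y\mapsto a(\cdot,y)$ need not map $H_0^1(\Omega)$ into $H^{-1}(\Omega)$, so the application of Browder--Minty in Stage~1 is not literal. The clean workaround, followed in \cite{MR3586845,Troltzsch}, is to first solve a truncated problem with $a_M(x,s):=a(x,T_M s)$ (where $T_M$ is the usual cutoff at height $M$), for which monotone operator theory applies directly on $H_0^1(\Omega)$; to observe that the Stampacchia bound of Stage~2 is uniform in $M$; and to pass to the limit in $M$, using the a.e.\ continuity of $a$ in the second argument together with dominated convergence, to recover a solution of the untruncated equation which is automatically bounded. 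Uniqueness then follows from strict monotonicity by subtracting two solutions and testing with their difference.
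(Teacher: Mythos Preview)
Your proposal is correct and follows exactly the route the paper indicates: the paper does not give a detailed proof but simply cites the main theorem on monotone operators and Stampacchia's truncation argument, referring to \cite[Section 2]{MR3586845} and \cite[Theorem 4.8]{Troltzsch} for details, and your two-stage argument (Browder--Minty plus truncation, then Stampacchia iteration on level sets) is precisely that strategy. One cosmetic point: in Stage~1 you pair $a(\cdot,0)$ and $f$ with $y$ in $L^2$, but since the hypothesis only guarantees $L^q$ with $q>d/2$ you should use the $L^q$--$L^{q'}$ duality together with the Sobolev embedding $H_0^1(\Omega)\hookrightarrow L^{q'}(\Omega)$; this is exactly what is needed to land on the final estimate in the $L^q$ norm of $f-a(\cdot,0)$.
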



\subsection{Finite element discretization}\label{sec:discretization}
\label{subsec:fem}
 We denote by $\mathscr{T} = \{T\}$ a conforming partition of $\bar{\Omega}$ into simplices $T$ with size $h_T := \textrm{diam}(T)$. We denote by $\mathbb{T}$ the collection of conforming and shape regular meshes that are refinements of an initial mesh $\mathscr{T}_0$. We denote by $\Sides$ the set of internal $(d-1)$-dimensional interelement boundaries $S $ of $\T$. If $T \in \T$, we define $\Sides_T$ as the subset of $\Sides$ that contains the sides of $T$. For $S \in \Sides$, we set $\mathcal{N}_S = \{ T^+, T^-\}$, where $T^+, T^- \in \T$ are such that $S=T^+ \cap T^-$. In addition, we define the \emph{star} or \emph{patch} associated to the element $T \in \T$ as
\begin{equation}
\label{eq:patch}
\mathcal{N}_T= \{T' \in \T: \Sides_T \cap \Sides_{T'} \neq \emptyset \}.
\end{equation}

Given a mesh $\mathscr{T} \in \mathbb{T}$, we define the finite element space of continuous piecewise polynomials of degree one as
\begin{equation}\label{def:piecewise_linear_set}
\mathbb{V}(\T):=\{v_\T\in C(\bar{\Omega}): v_\T|_T\in \mathbb{P}_{1}(T) \ \forall \ T\in \T\}\cap H_0^1(\Omega).
\end{equation}

Given a discrete function $v_{\T} \in \mathbb{V}(\T)$, we define, for any internal side $S \in \Sides$, the jump or interelement residual  $\llbracket \nabla v_\mathscr{T}\cdot \boldsymbol{\nu} \rrbracket$ by
\[
\llbracket \nabla v_\mathscr{T}\cdot \boldsymbol{\nu} \rrbracket:= \boldsymbol{\nu}^{+} \cdot \nabla v_{\mathscr{T}}|^{}_{T^{+}} + \boldsymbol{\nu}^{-} \cdot \nabla v_{\mathscr{T}}|^{}_{T^{-}},
\]
where $\boldsymbol{\nu}^{+}, \boldsymbol{\nu}^{-}$ denote the unit normals to $S$ pointing towards $T^{+}$, $T^{-} \in \T$, respectively, which are such that $T^{+} \neq T^{-}$ and $\partial T^{+} \cap \partial T^{-} = S$.

We define the Galerkin approximation to problem \eqref{eq:weak_pde} by
\begin{equation}\label{eq:discrete_semi}
y_\T\in\mathbb{V}(\T):
\quad 
(\nabla y_\T,\nabla v_\T)_{L^2(\Omega)}+(a(\cdot,y_\T),v_\T)_{L^2(\Omega)}=(f,v_\T)_{L^2(\Omega)} 
\end{equation}
for all $v_\T\in \mathbb{V}(\T)$. Standard results yield the existence and uniqueness of a discrete solution $y_{\T}$.

\subsection{A posteriori error analysis for the semilinear equation}

Let $f \in L^2(\Omega)$ and let $a = a(x,y): \Omega \times \mathbb{R} \rightarrow \mathbb{R}$ be as in the statement of Theorem \ref{thm:well_posedness_semilinear} with $a(\cdot,0) \in L^2(\Omega)$. Let us assume, in addition, that $a$ is locally Lipschitz with respect to $y$. With the notation introduced in section \ref{subsec:fem} at hand, we define the following a posteriori local error indicators and error estimator
\begin{equation*}
\mathcal{E}_{T}^2:=h_T^2\|f-a(\cdot,y_\T)\|_{L^2(T)}^2+h_T\|\llbracket\nabla y_\T\cdot\boldsymbol{\nu}\rrbracket\|_{L^2(\partial T\setminus\partial\Omega)}^2,
\quad 
\mathcal{E}_{\T}^2:=\sum_{T\in\T}\mathcal{E}_{T}^2,
\end{equation*}
respectively. Notice that since $a$ is locally Lipschitz with respect to $y$ and $a(\cdot,0) \in L^2(\Omega)$, the residual term $h_T^2\|f-a(\cdot,y_\T)\|_{L^2(T)}^2$ is well--defined.

We present the following reliability result and, for the sake of readability, a proof.

\begin{theorem}[global reliability of $\mathcal{E}_{\T}$]\label{thm:global_reli_weak}
Let $f \in L^2(\Omega)$ and let $a = a(x,y): \Omega \times \mathbb{R} \rightarrow \mathbb{R}$ be as in the statement of Theorem \ref{thm:well_posedness_semilinear} with $a(\cdot,0) \in L^2(\Omega)$. Let us assume, in addition, that $a$ is locally Lipschitz with respect to $y$. Let $y\in H_0^1(\Omega)\cap L^\infty(\Omega)$ be the unique solution to problem \eqref{eq:weak_pde} and $y_\T\in\mathbb{V}(\T)$ its finite element approximation obtained as the solution to \eqref{eq:discrete_semi}. Then
\[
\| \nabla(y-y_\T) \|_{L^2(\Omega)} \lesssim \mathcal{E}_{\T}.
\]
The hidden constant is independent of $y$, $y_{\T}$, the size of the elements in the mesh $\T$, and $\#\T$.
\end{theorem}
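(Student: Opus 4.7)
The plan is to carry out a standard residual-type reliability argument, with one crucial use of the monotonicity assumption \ref{A2} to absorb the nonlinear contribution. Set $e := y - y_\T \in H_0^1(\Omega)$ and consider the residual functional
\[
R(v) := (f,v)_{L^2(\Omega)} - (\nabla y_\T, \nabla v)_{L^2(\Omega)} - (a(\cdot,y_\T),v)_{L^2(\Omega)}, \quad v \in H_0^1(\Omega).
\]
Subtracting \eqref{eq:discrete_semi} from \eqref{eq:weak_pde} yields, for every $v \in H_0^1(\Omega)$,
\[
(\nabla e, \nabla v)_{L^2(\Omega)} + (a(\cdot,y) - a(\cdot,y_\T), v)_{L^2(\Omega)} = R(v),
\]
and, by construction, $R(v_\T) = 0$ for all $v_\T \in \mathbb{V}(\T)$ (Galerkin orthogonality).

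Next I would test the error equation with $v = e$. The monotonicity property \eqref{eq:monotone_operator} (which follows from \ref{A2}) implies that the nonlinear term $(a(\cdot,y) - a(\cdot,y_\T), e)_{L^2(\Omega)}$ is nonnegative, so we immediately obtain
\[
\|\nabla e\|_{L^2(\Omega)}^2 \leq R(e) = R(e - I_\T e),
\]
where $I_\T : H_0^1(\Omega) \to \mathbb{V}(\T)$ denotes a Scott--Zhang (or Cl\'ement) quasi-interpolant. This is the step that completely sidesteps the nonlinearity in the reliability bound; no use of the Lipschitz property \eqref{eq:Lipschitz_cont} is needed here.

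The remainder is routine: I would integrate by parts elementwise to rewrite
\[
R(e - I_\T e) = \sum_{T \in \T}(f - a(\cdot,y_\T), e - I_\T e)_{L^2(T)} - \sum_{S \in \Sides}(\llbracket \nabla y_\T \cdot \boldsymbol{\nu}\rrbracket, e - I_\T e)_{L^2(S)},
\]
then apply Cauchy--Schwarz on each element and each side, together with the standard Scott--Zhang local approximation estimates
\[
\|e - I_\T e\|_{L^2(T)} \lesssim h_T \|\nabla e\|_{L^2(\mathcal{N}_T)}, \qquad \|e - I_\T e\|_{L^2(S)} \lesssim h_S^{1/2} \|\nabla e\|_{L^2(\mathcal{N}_S)}.
\]
Summing over $T$ and $S$, using the shape-regularity-based finite overlap of the patches $\mathcal{N}_T$ and $\mathcal{N}_S$, yields
\[
R(e - I_\T e) \lesssim \mathcal{E}_\T \, \|\nabla e\|_{L^2(\Omega)}.
\]
Combining with the monotonicity bound and dividing by $\|\nabla e\|_{L^2(\Omega)}$ gives the claimed estimate.

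The only genuine wrinkle is checking that the element residual $h_T^2\|f - a(\cdot,y_\T)\|_{L^2(T)}^2$ is meaningful. Since $y_\T \in \mathbb{V}(\T) \subset L^\infty(\Omega)$ is discrete and hence bounded, local Lipschitz continuity \eqref{eq:Lipschitz_cont} together with $a(\cdot,0) \in L^2(\Omega)$ ensures $a(\cdot,y_\T) \in L^2(\Omega)$, as already noted in the excerpt; so the indicator is well-defined. There is no real obstacle beyond that: the decisive and slightly non-generic ingredient is the monotonicity, which is what makes the argument as clean as in the linear case.
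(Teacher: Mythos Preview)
Your proof is correct and essentially identical to the paper's own argument: both derive the error equation, invoke Galerkin orthogonality to insert a Cl\'ement/Scott--Zhang interpolant, integrate by parts elementwise, apply the standard local interpolation estimates with finite overlap, and then test with $v=e$ and use the monotonicity \eqref{eq:monotone_operator} to discard the nonlinear term. The only cosmetic differences are the order of presentation (you test with $v=e$ before localizing, the paper after) and the sign in front of the jump term, neither of which affects the argument.
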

\begin{proof}
Let $v \in H_0^1(\Omega)$. Since $y$ solves \eqref{eq:weak_pde}, we invoke Galerkin orthogonality and an elementwise integration by parts formula to arrive at
\begin{multline*}
(\nabla (y-y_\T),\nabla v)_{L^2(\Omega)}+
(a(\cdot,y)-a(\cdot,y_\T),v)_{L^2(\Omega)}
\\
= \sum_{T\in\T}
\int_{T}(f-a(x,y_\T) )(v-I_\T v ) \mathrm{d} x
+\sum_{S\in\Sides}\int_{S} \llbracket\nabla y_\T\cdot\boldsymbol{\nu}\rrbracket ( v-I_\T v) \mathrm{d}x,
\end{multline*}
where $I_\T:L^1(\Omega)\rightarrow \mathbb{V}(\T)$ denotes the Cl\'ement interpolation operator \cite{MR2373954,MR0520174}. Standard approximation properties for $I_\T$ and the finite overlapping property of stars allow us to conclude that
\begin{multline*}
(\nabla (y-y_\T),\nabla v)_{L^2(\Omega)}
+
(a(\cdot,y)-a(\cdot,y_\T),v)_{L^2(\Omega)}
\lesssim
\\
\left(\sum_{T\in\T}h_T^2\|f-a(\cdot,y_\T)\|_{L^2(T)}^2+h_T\|\llbracket\nabla y_\T\cdot\boldsymbol{\nu}\rrbracket\|_{L^2(\partial T\setminus \partial\Omega)}^2\right)^{\tfrac{1}{2}}\|\nabla v\|_{L^2(\Omega)}.
\end{multline*}
Set $v=y-y_\T\in H_0^1(\Omega)$ and invoke property \eqref{eq:monotone_operator} to conclude.
\end{proof}


\section{A semilinear optimal control problem}\label{sec:pt_ocp}
In this section, we precisely describe a weak version of the optimal control problem \eqref{def:cost_func}--\eqref{def:box_constraints}, which reads as follows:
\begin{equation}
\min \{ J(y,u): (y,u) \in H_0^1(\Omega) \times \mathbb{U}_{ad}\}
\label{def:weak_ocp}
\end{equation}
subject to the \emph{monotone, semilinear, and elliptic state equation}
\begin{equation}\label{eq:weak_st_eq}
(\nabla y,\nabla v)_{L^2(\Omega)}+(a(\cdot,y),v)_{L^2(\Omega)}=(u,v)_{L^2(\Omega)} \quad \forall\ v\in H_0^1(\Omega).
\end{equation}

The existence of an optimal state-control pair is as follows; see \cite[Theorem 6.16]{MR1756264}, \cite[Theorem 4.15]{Troltzsch}, and \cite[Theorem 6]{MR3586845}.

\begin{theorem}[existence of the solution]\label{thm:existence_control}
Suppose that assumptions \textnormal{\ref{A1}}--\textnormal{\ref{A3}} hold. Then, the optimal control problem \eqref{def:weak_ocp}--\eqref{eq:weak_st_eq} admits at least one solution $(\bar{y},\bar{u}) \in H_{0}^{1}(\Omega) \cap L^{\infty}(\Omega) \times \mathbb{U}_{ad}$.
\end{theorem}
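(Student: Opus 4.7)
The plan is to apply the direct method of the calculus of variations. The admissible set $\mathbb{U}_{ad}$ is nonempty (any constant in $[\texttt{a},\texttt{b}]$ belongs to it), convex, closed, and bounded in $L^2(\Omega)$; moreover $J \geq 0$, so $j := \inf\{J(y,u): (y,u) \in H_0^1(\Omega)\times \mathbb{U}_{ad}\text{ solves }\eqref{eq:weak_st_eq}\}$ is finite. Let $\{u_n\} \subset \mathbb{U}_{ad}$ be a minimizing sequence. Because $\mathbb{U}_{ad}$ is uniformly bounded in $L^\infty(\Omega)$ (and hence in every $L^q(\Omega)$), Theorem \ref{thm:well_posedness_semilinear} applied with $q = 2 > d/2$ (using \ref{A1}, \ref{A2}, which guarantee $a$ is Carath\'eodory and monotone increasing in $y$, and $a(\cdot,0)\in L^2(\Omega)$) yields a unique state $y_n \in H_0^1(\Omega)\cap L^\infty(\Omega)$ with the uniform bound
\[
\| \nabla y_n \|_{L^2(\Omega)} + \| y_n \|_{L^\infty(\Omega)} \lesssim \| u_n - a(\cdot,0) \|_{L^2(\Omega)} \lesssim 1.
\]

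Next, I would extract weakly/strongly convergent subsequences (not relabeled). Since $\mathbb{U}_{ad}$ is bounded, convex, and closed in $L^2(\Omega)$, it is weakly closed, so $u_n \rightharpoonup \bar u$ in $L^2(\Omega)$ with $\bar u \in \mathbb{U}_{ad}$. The uniform $H_0^1(\Omega)\cap L^\infty(\Omega)$ bound above, combined with the Rellich--Kondrachov theorem, yields $\bar y \in H_0^1(\Omega)\cap L^\infty(\Omega)$ such that $y_n \rightharpoonup \bar y$ in $H_0^1(\Omega)$, $y_n \to \bar y$ in $L^2(\Omega)$, and (passing to a further subsequence) pointwise a.e.\ in $\Omega$, with $\|\bar y\|_{L^\infty(\Omega)}$ uniformly bounded.

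The main obstacle is passing to the limit in the nonlinear term $(a(\cdot,y_n),v)_{L^2(\Omega)}$ of \eqref{eq:weak_st_eq}. Using the uniform $L^\infty$ bound for $\{y_n\}$, assumption \ref{A3}, and the pointwise a.e.\ convergence $y_n \to \bar y$, the local Lipschitz estimate \eqref{eq:Lipschitz_cont} gives $|a(\cdot,y_n) - a(\cdot,\bar y)| \leq C_M |y_n - \bar y|$ a.e.\ with $M$ independent of $n$, so Lebesgue's dominated convergence theorem (or directly the strong $L^2$ convergence of $y_n$) yields $a(\cdot,y_n) \to a(\cdot,\bar y)$ in $L^2(\Omega)$. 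The linear terms in \eqref{eq:weak_st_eq} pass to the limit by weak convergence of $\nabla y_n$ and $u_n$. Therefore $(\bar y,\bar u)$ satisfies the state equation.

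Finally, I would verify that $J$ attains $j$ at $(\bar y,\bar u)$. The tracking term $\tfrac{1}{2}\|y_n - y_\Omega\|_{L^2(\Omega)}^2 \to \tfrac{1}{2}\|\bar y - y_\Omega\|_{L^2(\Omega)}^2$ by strong $L^2$ convergence, while the regularization term $\tfrac{\nu}{2}\|\cdot\|_{L^2(\Omega)}^2$ is convex and continuous on $L^2(\Omega)$, hence weakly lower semicontinuous, so $\tfrac{\nu}{2}\|\bar u\|_{L^2(\Omega)}^2 \leq \liminf_{n\to\infty}\tfrac{\nu}{2}\|u_n\|_{L^2(\Omega)}^2$. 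Combining these,
\[
J(\bar y,\bar u) \leq \liminf_{n\to\infty} J(y_n,u_n) = j,
\]
which identifies $(\bar y,\bar u) \in H_0^1(\Omega)\cap L^\infty(\Omega)\times \mathbb{U}_{ad}$ as a global minimizer and concludes the proof.
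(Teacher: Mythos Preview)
Your argument is correct and follows the standard direct method of the calculus of variations. The paper does not supply its own proof of this theorem but instead refers to \cite[Theorem 6.16]{MR1756264}, \cite[Theorem 4.15]{Troltzsch}, and \cite[Theorem 6]{MR3586845}, and your proof is essentially the argument found in those references (minimizing sequence, uniform state bounds via Theorem~\ref{thm:well_posedness_semilinear}, compactness to pass to the limit in the semilinear term, and weak lower semicontinuity of $J$).
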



\subsection{First order necessary optimality conditions}\label{sec:1st_order}

To formulate first order optimality conditions for problem \eqref{def:weak_ocp}--\eqref{eq:weak_st_eq}, we introduce the so-called control-to-state map $\mathcal{S}: L^{q}(\Omega) \rightarrow H_{0}^{1}(\Omega)\cap L^\infty(\Omega)$ ($q>d/2$), which, given a control $u \in L^{q}(\Omega) \subset\mathbb{U}_{ad}$, associates to it the unique state $y$ that solves \eqref{eq:weak_st_eq}. With this operator at hand, we introduce the reduced cost functional
\begin{equation*}
j(u) := J(\mathcal{S}u,u) = \dfrac{1}{2}\|\mathcal{S}u - y_{\Omega}\|_{L^{2}(\Omega)}^{2} + \dfrac{\nu}{2}\|u\|_{L^{2}(\Omega)}^{2}.
\end{equation*}

Suppose that  assumptions \textnormal{\ref{A1}}--\textnormal{\ref{A3}} hold, then the control-to-state map $\mathcal{S}$ is Fr\'echet differentiable from $L^{q}(\Omega)$ into $H_0^1(\Omega) \cap L^{\infty}(\Omega)$ ($q>d/2$) \cite[Theorem 4.17]{Troltzsch}. As a consequence, if $\bar{u}$ denotes a local optimal control for problem \eqref{def:weak_ocp}--\eqref{eq:weak_st_eq}, we thus have the variational inequality \cite[Lemma 4.18]{Troltzsch}
\begin{equation}
\label{eq:variational_inequality}
j'(\bar u) (u - \bar u) \geq 0 \quad \forall \: u \in \mathbb{U}_{ad}. 
\end{equation}
Here, $j'(\bar{u})$ denotes the Gate\^aux derivative of the functional $j$ in $\bar{u}$. To explore \eqref{eq:variational_inequality} we introduce the adjoint variable $p \in H_{0}^{1}(\Omega) \cap L^{\infty}(\Omega)$ as the unique solution to the \emph{adjoint equation}
\begin{equation}\label{eq:adj_eq}
(\nabla w,\nabla p)_{L^2(\Omega)} + \left(\tfrac{\partial a}{\partial y}(\cdot,y)p,w\right)_{L^2(\Omega)} = (y - y_{\Omega},w)_{L^2(\Omega)}\quad \forall\ w \in H_0^1(\Omega),
\end{equation}
where $y = \mathcal{S}u$ solves \eqref{eq:weak_st_eq}. Problem \eqref{eq:adj_eq} is well--posed.

With these ingredients at hand, we present the desired necessary optimality condition for our PDE--constrained optimization problem; see \cite[Theorem 4.20]{Troltzsch} and \cite[Theorem 3.2]{MR1937089}.

\begin{theorem}[first order necessary optimality conditions]
\label{optimality_cond}
Suppose that assumptions \textnormal{\ref{A1}--\ref{A3}} hold. Then, every local optimal control $\bar{u} \in \mathbb{U}_{ad}$ for problem \eqref{def:weak_ocp}--\eqref{eq:weak_st_eq} satisfies, together with the adjoint state $\bar p \in H_0^1(\Omega) \cap L^{\infty}(\Omega)$, the variational inequality
\begin{equation}\label{eq:var_ineq}
(\bar{p}+\nu \bar{u},u-\bar{u})_{L^2(\Omega)}\geq 0 \quad \forall\ u\in \mathbb{U}_{ad}.
\end{equation}
Here, $\bar p$ denotes the solution to \eqref{eq:adj_eq} with $y$ replaced by $\bar{y} = \mathcal{S} \bar u$.
\end{theorem}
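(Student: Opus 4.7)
The plan is to derive the concrete form of the Gateaux derivative $j'(\bar u)$ appearing in the abstract variational inequality \eqref{eq:variational_inequality} by passing through the control-to-state map and then eliminating its derivative in favor of the adjoint state $\bar p$. The key identity is $j'(\bar u)v = (\bar p + \nu \bar u, v)_{L^2(\Omega)}$ for every admissible direction $v$, from which \eqref{eq:var_ineq} follows immediately by choosing $v = u - \bar u$ with $u \in \mathbb{U}_{ad}$.

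The concrete steps I would carry out are as follows. First, by the Fr\'echet differentiability of $\mathcal{S}:L^q(\Omega)\to H_0^1(\Omega)\cap L^\infty(\Omega)$ (invoked in the paragraph preceding \eqref{eq:variational_inequality}), the chain rule yields
\begin{equation*}
j'(\bar u) v = (\mathcal{S}\bar u - y_\Omega, \mathcal{S}'(\bar u)v)_{L^2(\Omega)} + \nu(\bar u, v)_{L^2(\Omega)}, \qquad v \in L^2(\Omega).
\end{equation*}
Second, I would identify $z := \mathcal{S}'(\bar u)v \in H_0^1(\Omega)$ as the unique solution of the linearized state equation obtained by formally differentiating \eqref{eq:weak_st_eq}:
\begin{equation*}
(\nabla z, \nabla w)_{L^2(\Omega)} + \bigl(\tfrac{\partial a}{\partial y}(\cdot,\bar y) z, w\bigr)_{L^2(\Omega)} = (v, w)_{L^2(\Omega)} \quad \forall\, w \in H_0^1(\Omega).
\end{equation*}
Assumptions \ref{A2}--\ref{A3}, combined with $\bar y \in L^\infty(\Omega)$, guarantee that this linear problem is well posed and characterizes $\mathcal{S}'(\bar u)v$.

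Third, I would use the adjoint state $\bar p$, which by \eqref{eq:adj_eq} (with $y = \bar y$) satisfies
\begin{equation*}
(\nabla w, \nabla \bar p)_{L^2(\Omega)} + \bigl(\tfrac{\partial a}{\partial y}(\cdot,\bar y)\bar p, w\bigr)_{L^2(\Omega)} = (\bar y - y_\Omega, w)_{L^2(\Omega)} \quad \forall\, w \in H_0^1(\Omega).
\end{equation*}
Testing the linearized equation with $w = \bar p$ and the adjoint equation with $w = z$ and subtracting yields the duality identity $(\bar y - y_\Omega, z)_{L^2(\Omega)} = (v, \bar p)_{L^2(\Omega)}$. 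Substituting into the expression for $j'(\bar u)v$ gives $j'(\bar u)v = (\bar p + \nu \bar u, v)_{L^2(\Omega)}$, and \eqref{eq:variational_inequality} with $v = u - \bar u$ produces \eqref{eq:var_ineq}.

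The arguments are essentially bookkeeping once the ingredients are in place; there is no real obstacle. The only delicate point worth double-checking is that the chain rule applies with the stated target spaces: one needs $\mathcal{S}'(\bar u):L^q(\Omega)\to H_0^1(\Omega)\cap L^\infty(\Omega)$ together with the embedding $L^2(\Omega) \hookrightarrow L^q(\Omega)'$ in the reverse direction to interpret the pairings in $L^2(\Omega)$. This is handled by noting that $\bar u \in L^\infty(\Omega) \subset L^q(\Omega)$ for every $q \geq 1$ and that admissible directions $u - \bar u$ belong to $L^\infty(\Omega)$, so the duality computations are consistent. Finally, the inequality extends to all $u \in \mathbb{U}_{ad}$ by density/approximation since $\mathbb{U}_{ad} \subset L^\infty(\Omega)$.
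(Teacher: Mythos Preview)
Your argument is correct and is precisely the standard adjoint-based derivation of the variational inequality; the paper itself does not supply a proof of this theorem but instead refers to \cite[Theorem 4.20]{Troltzsch} and \cite[Theorem 3.2]{MR1937089}, where essentially the same computation is carried out. One minor remark: your closing sentence about extending the inequality to all $u\in\mathbb{U}_{ad}$ ``by density/approximation'' is unnecessary, since $\mathbb{U}_{ad}\subset L^\infty(\Omega)$ already (the bounds $\texttt{a},\texttt{b}$ are finite real numbers), so every admissible $u$ is automatically in $L^\infty(\Omega)$ and the pairing is well defined from the outset.
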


We now introduce the projection operator $\Pi_{[\texttt{a},\texttt{b}]} : L^1(\Omega) \rightarrow  \mathbb{U}_{ad}$ as 
\begin{equation}\label{def:projector_pi}
\Pi_{[\texttt{a},\texttt{b}]}(v) := \min\{ \texttt{b}, \max\{ v, \texttt{a}\} \} \textrm{ a.e in } \Omega.
\end{equation} 
With this projector at hand, we present the following result: The local optimal control $\bar{u}$ satisfies \eqref{eq:var_ineq} if and only if 
\begin{equation}\label{eq:projection_control}
\bar{u}(x):=\Pi_{[\texttt{a},\texttt{b}]}(-\nu^{-1}\bar{p}(x)) \textrm{ a.e. } x \in \Omega.
\end{equation}
In particular, this formula implies that $\bar u \in H^1(\Omega) \cap L^{\infty}(\Omega)$; see \cite[Theorem A.1]{MR1786735}.

\subsection{Second order sufficient optimality condition}\label{sec:2nd_order}
We follow \cite{MR3586845,MR2902693} and present necessary and sufficient second order optimality conditions.

Let $\bar{u} \in \mathbb{U}_{ad}$ satisfy the first order optimality conditions \eqref{eq:weak_st_eq}, \eqref{eq:adj_eq}, and \eqref{eq:var_ineq}. Define $\bar{\mathfrak{p}} :=  \bar p + \nu \bar u$. In view of \eqref{eq:var_ineq}, it follows that
\begin{equation*}
\bar{\mathfrak{p}}(x) 
\begin{cases}
= 0  & \text{ a.e. } x \in \Omega \text{ if } \texttt{a}< \bar{u} < \texttt{b}, \\
\geq  0  & \text{ a.e. } x \in \Omega \text{ if }\bar{u}=\texttt{a}, \\
\leq  0  & \text{ a.e. } x \in \Omega \text{ if } \bar{u}=\texttt{b}.
\end{cases}
\end{equation*}

Define the \emph{cone of critical directions}
\begin{equation*}
C_{\bar{u}}:=\{v\in L^2(\Omega) \text{ satisfying } \eqref{eq:cone_def} \text{ and } v(x) = 0 \text{ if } \bar{\mathfrak{p}}(x) \neq 0\},
\end{equation*}
with
\begin{equation}
\label{eq:cone_def}
v(x)
\begin{cases}
\geq 0 & \text{ a.e. } x\in\Omega \text{ if } \bar{u}(x)=\texttt{a},\\
\leq 0 & \text{ a.e. } x\in\Omega \text{ if } \bar{u}(x)=\texttt{b}.
\end{cases}
\end{equation}

We are now in conditions to present second order necessary and sufficient optimality conditions; see \cite[Theorem 23]{MR3586845}.

\begin{theorem}[second order necessary and sufficient optimality conditions]\label{thm:second_order_cond}
Suppose that assumptions \textnormal{\ref{A1}--\ref{A3}} hold. If $\bar{u} \in \mathbb{U}_{ad}$ is local minimum
for problem \eqref{def:weak_ocp}--\eqref{eq:weak_st_eq}, then
\begin{equation*}
j''(\bar{u})v^2 \geq 0 \quad \forall\ v \in C_{\bar{u}}.
\end{equation*}
Conversely, if $(\bar{y},\bar{p},\bar{u}) \in H_0^1(\Omega)\times H_0^1(\Omega)\times \mathbb{U}_{ad}$ satisfies the first order optimality conditions \eqref{eq:weak_st_eq}, \eqref{eq:adj_eq}, and \eqref{eq:var_ineq}, and  
\begin{equation*}
j''(\bar{u})v^2 > 0 \quad \forall\ v \in C_{\bar{u}}\setminus \{0\},
\end{equation*}
then, there exist $\mu >0$ and $\varepsilon>0$ such that
\begin{equation*}
j(u)\geq j(\bar{u})+ \frac{\mu}{2}\|u-\bar{u}\|_{L^2(\Omega)}^2 \quad \forall\ u\in \mathbb{U}_{ad}\cap \bar{B}_{\varepsilon}(\bar{u}),
\end{equation*}
where $\bar{B}_{\varepsilon}(\bar{u})$ denotes the closed ball in $L^2(\Omega)$ with center at $\bar{u}$ and radius $\varepsilon$.
\end{theorem}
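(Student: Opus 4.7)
The plan is to prove the two implications separately: the necessary condition by a classical perturbation argument, and the sufficient condition by contradiction using weak compactness in $L^2(\Omega)$. Both arguments rely on a second order Taylor expansion of $j$ and on the explicit representation
\begin{equation*}
j''(u)v^2 = \|z_v\|_{L^2(\Omega)}^2 + \nu\|v\|_{L^2(\Omega)}^2 - \int_\Omega \tfrac{\partial^2 a}{\partial y^2}(\cdot,y_u)\,p_u\,z_v^2 \,\mathrm{d}x,
\end{equation*}
where $y_u = \mathcal{S}u$, $p_u$ is the associated adjoint, and $z_v \in H_0^1(\Omega)$ solves the linearized state equation with source $v$. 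Under assumptions \ref{A1}--\ref{A3} and Theorem \ref{thm:well_posedness_semilinear}, the map $v \mapsto z_v$ is linear and compact from $L^2(\Omega)$ into $L^2(\Omega)$, which underlies the weak-to-strong arguments below.

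For the necessary part, I would fix $v \in C_{\bar u}$ and first construct bounded approximants $v_k \in L^\infty(\Omega)$ that respect the sign conditions \eqref{eq:cone_def}, vanish on $\{\bar{\mathfrak{p}} \neq 0\}$, and satisfy $\bar u + t_k v_k \in \mathbb{U}_{ad}$ for some $t_k \downarrow 0$; this is a standard truncation combined with the characterization \eqref{eq:projection_control} of $\bar u$. A second order Taylor expansion along the admissible ray, the identity $j'(\bar u) v_k = (\bar{\mathfrak{p}},v_k)_{L^2(\Omega)}=0$ (since $v_k$ vanishes where $\bar{\mathfrak{p}}\neq 0$), and local optimality of $\bar u$ yield $\tfrac12\, t_k^2\, j''(\bar u) v_k^2 + o(t_k^2) \geq 0$; passing to the limit delivers $j''(\bar u) v^2 \geq 0$.

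For the sufficient part, I would argue by contradiction. If quadratic growth fails there exists a sequence $u_k \in \mathbb{U}_{ad}$ with $\rho_k := \|u_k-\bar u\|_{L^2(\Omega)} \downarrow 0$ and $j(u_k) < j(\bar u) + \rho_k^2/(2k)$. Setting $v_k := (u_k-\bar u)/\rho_k$ and extracting a weakly convergent subsequence $v_k \rightharpoonup v$ in $L^2(\Omega)$, I would verify that $v \in C_{\bar u}$: the sign conditions \eqref{eq:cone_def} pass to the weak limit, dividing the first order inequality $(\bar{\mathfrak{p}},u_k-\bar u)_{L^2(\Omega)}\geq 0$ by $\rho_k$ yields $(\bar{\mathfrak{p}},v)_{L^2(\Omega)}\geq 0$, and an opposite inequality coming from the contradiction assumption then forces $(\bar{\mathfrak{p}},v)_{L^2(\Omega)}=0$, which combined with the sign structure of $\bar{\mathfrak{p}}$ and $v$ gives $v=0$ on $\{\bar{\mathfrak{p}}\neq 0\}$. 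Next I would apply a second order Taylor expansion with remainder, use the first order inequality, and divide by $\rho_k^2$ to reach, in the limit, $j''(\bar u) v^2 \leq 0$. If $v\neq 0$ this directly contradicts the sufficiency hypothesis. If $v=0$, then $z_{v_k}\to 0$ strongly, the curvature term involving $\partial^2 a/\partial y^2$ disappears, and the identity for $j''$ reduces to $\nu \liminf_k \|v_k\|_{L^2(\Omega)}^2 \leq 0$, contradicting $\nu>0$ and $\|v_k\|_{L^2(\Omega)}=1$.

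The hard part will be the limit in $j''(u_k^\theta) v_k^2$ evaluated at intermediate controls $u_k^\theta$ between $\bar u$ and $u_k$: one needs simultaneously a weak-to-strong upgrade of $z_{v_k}$ (from compactness of the linearized solution map) and continuity of the coefficient $\tfrac{\partial^2 a}{\partial y^2}(\cdot,y_{u_k^\theta})\,p_{u_k^\theta}$ as $u_k^\theta \to \bar u$ in $L^2(\Omega)$. The uniform $L^\infty$ estimates of Theorem \ref{thm:well_posedness_semilinear} bound the base states, assumption \ref{A3} then provides a uniform $L^\infty$ bound on the curvature coefficient, and the local Lipschitz bounds \eqref{eq:Lipschitz_cont} together with stability of the state and adjoint equations under $L^2$ perturbations of the control yield the remaining convergence; these are the routine but nontrivial checks to which the argument reduces.
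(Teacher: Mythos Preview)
The paper does not provide its own proof of this theorem; it is stated with a citation to \cite[Theorem 23]{MR3586845} and no argument is given. Your proposal is correct and follows precisely the standard Casas--Tr\"oltzsch approach that underlies the cited reference: truncation plus Taylor expansion for the necessary condition, and a contradiction argument via weak $L^2$ compactness, the explicit representation of $j''$, and the compactness of the linearized control-to-state map for the sufficient condition (including the dichotomy between $v\neq 0$ and $v=0$, the latter handled through the strictly positive $\nu$-term). There is nothing to compare against in the paper itself, and your outline matches the proof one would find in the literature it cites.
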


Define
\begin{equation}\label{def:critical_cone_tau}
C_{\bar{u}}^\tau:=\{v\in L^2(\Omega) \textnormal{ satisfying \eqref{eq:cone_def} and } v(x)=0 \textnormal{ if } |\bar{\mathfrak{p}}(x)|>\tau \}.
\end{equation}

The next result will be of importance for deriving a posteriori error estimates for the numerical discretizations of \eqref{def:weak_ocp}--\eqref{eq:weak_st_eq} that we will propose; see \cite[Theorem 25]{MR3586845}.

\begin{theorem}[equivalent optimality condition]\label{thm:equivalent_second_order}
Suppose that assumptions \textnormal{\ref{A1}--\ref{A3}} hold. If $\bar{u}\in \mathbb{U}_{ad}$ satisfies \eqref{eq:var_ineq} then, the following statements are equivalent:
\begin{equation}\label{eq:second_order_2_2}
j''(\bar{u})v^2 > 0 \quad \forall\ v \in C_{\bar{u}}\setminus \{0\},
\end{equation}
and
\begin{equation}\label{eq:second_order_equivalent}
\exists \mu, \tau >0: \quad j''(\bar{u})v^2 \geq \mu \|v\|_{L^2(\Omega)}^2 \quad \forall\ v \in C_{\bar{u}}^\tau.
\end{equation}
\end{theorem}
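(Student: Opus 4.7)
The plan is to prove the two implications separately. The direction \eqref{eq:second_order_equivalent} $\Rightarrow$ \eqref{eq:second_order_2_2} is immediate: for any $v \in C_{\bar{u}}\setminus\{0\}$, the definition of $C_{\bar{u}}$ forces $v$ to vanish on $\{|\bar{\mathfrak{p}}|>\tau\}$ for every $\tau>0$, so $v \in C_{\bar{u}}^\tau$ and \eqref{eq:second_order_equivalent} immediately yields $j''(\bar{u})v^2 \geq \mu\|v\|_{L^2(\Omega)}^2 > 0$.

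For the converse implication, I would argue by contradiction, following the standard template for equivalence results of this kind in semilinear optimal control. Suppose \eqref{eq:second_order_2_2} holds but \eqref{eq:second_order_equivalent} fails. Taking $\mu=\tau=1/n$ successively, I obtain a sequence $v_n \in C_{\bar{u}}^{1/n}$ with $j''(\bar{u})v_n^2 < \tfrac{1}{n}\|v_n\|_{L^2(\Omega)}^2$ and, after normalizing, $\|v_n\|_{L^2(\Omega)}=1$. Weak compactness of the unit ball in $L^2(\Omega)$ then produces a subsequence (not relabeled) with $v_n \rightharpoonup v$ in $L^2(\Omega)$. I would next verify that $v \in C_{\bar{u}}$: the pointwise sign conditions \eqref{eq:cone_def} define a closed convex subset of $L^2(\Omega)$ and are therefore preserved under weak convergence; to obtain $v = 0$ on $\{\bar{\mathfrak{p}}\neq 0\}$, fix $\varepsilon>0$, observe that $v_n \equiv 0$ on $\{|\bar{\mathfrak{p}}|>\varepsilon\}$ as soon as $1/n<\varepsilon$, pass to the weak limit, and let $\varepsilon\downarrow 0$.

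Finally, I would pass to the limit in $j''(\bar{u})v_n^2<1/n$ by decomposing $j''(\bar{u})w^2 = \nu\|w\|_{L^2(\Omega)}^2 + \mathcal{Q}(w)$, where $\mathcal{Q}$ collects the contributions depending on the linearized state $z_w$ associated with $w$. Since the control-to-linearized-state map factors through $H_0^1(\Omega)$ and is therefore compact from $L^2(\Omega)$ into $L^2(\Omega)$, the functional $\mathcal{Q}$ is weakly sequentially continuous and hence $\mathcal{Q}(v_n)\to \mathcal{Q}(v)$. If $v\neq 0$, weak lower semicontinuity of $\|\cdot\|_{L^2(\Omega)}$ yields $j''(\bar{u})v^2 \leq \liminf_{n\to\infty} j''(\bar{u})v_n^2 \leq 0$, contradicting \eqref{eq:second_order_2_2} since $v \in C_{\bar{u}}\setminus\{0\}$. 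If instead $v=0$, then $\mathcal{Q}(v_n)\to 0$ and $\nu = \nu\|v_n\|_{L^2(\Omega)}^2 = j''(\bar{u})v_n^2 - \mathcal{Q}(v_n) \to 0$, again a contradiction. The main obstacle is this last $v=0$ case: it forces one to exploit the precise additive structure of $j''$ — a coercive Tikhonov term plus a compact remainder — to rule out weak convergence to zero of the normalized sequence. The identification of the weak limit as an element of $C_{\bar{u}}$ via the exhaustion by the sets $\{|\bar{\mathfrak{p}}|>\varepsilon\}$ is the other delicate point, but it is technically lighter than the compactness argument.
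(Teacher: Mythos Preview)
The paper does not actually prove this theorem: it is stated with a reference to \cite[Theorem~25]{MR3586845} and used thereafter without argument. Your sketch reproduces the standard contradiction argument from that reference (normalized sequence in $C_{\bar u}^{1/n}$, weak $L^2$ compactness, identification of the weak limit in $C_{\bar u}$, and the decomposition $j''(\bar u)w^2 = \nu\|w\|_{L^2(\Omega)}^2 + \mathcal{Q}(w)$ with $\mathcal{Q}$ weakly sequentially continuous by compactness of the linearized control-to-state map). So there is nothing to compare against in the paper itself; your route is the expected one.

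One small expository slip in the final case $v=0$: you write $\nu = j''(\bar u)v_n^2 - \mathcal{Q}(v_n) \to 0$, but the left-hand side is the constant $\nu$ for every $n$, so it does not converge to anything other than $\nu$. The clean way to phrase it is to take the limsup in the inequality $\nu + \mathcal{Q}(v_n) = j''(\bar u)v_n^2 < 1/n$; since $\mathcal{Q}(v_n)\to \mathcal{Q}(0)=0$, this yields $\nu \leq 0$, contradicting $\nu>0$. The idea is right, only the wording needs tightening.
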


We close this section with the following estimate: Let $u, h, v \in L^\infty(\Omega)$ and $\mathsf{M} >0$ be such that $\max\{\| u\|_{L^\infty(\Omega)},\|h\|_{L^\infty(\Omega)}\} \leq \mathsf{M}$. Then, there exists $C_{\mathsf{M}} > 0$ such that \cite[Lemma 4.26]{Troltzsch}
\begin{equation}\label{eq:estimate_second_der}
|j''(u + h)v^2-j''(u)v^2|\leq C_{\mathsf{M}} \|h\|_{L^\infty(\Omega)}\|v\|_{L^2(\Omega)}^2.
\end{equation}


\subsection{Finite element discretization}\label{sec:fem_ocp}
We present a finite element discretization of our optimal control problem. The approximation of the optimal control $\bar u$ is done by piecewise constant functions: $\bar u_{\T} \in \mathbb{U}_{ad}(\mathscr{T})$, where
\begin{equation*}
\mathbb{U}_{ad}(\mathscr{T}):=\mathbb{U}(\mathscr{T})\cap \mathbb{U}_{ad},
\quad
\mathbb{U}(\mathscr{T}):=\{ u_\T\in L^\infty(\Omega): u_\T|^{}_T\in \mathbb{P}_0(T) \ \forall \ T\in \T\}.
\end{equation*}
The optimal state and adjoint state are discretized using the finite element space $\mathbb{V}(\T)$ defined in \eqref{def:piecewise_linear_set}. In this setting, the discrete counterpart of \eqref{def:weak_ocp}--\eqref{eq:weak_st_eq}
reads as follows: Find $\min J(y_{\T},u_{\T})$ subject to the discrete state equation
\begin{equation}
\label{eq:discrete_state_equation}
y^{}_\mathscr{T} \in \mathbb{V}(\mathscr{T}): \quad
(\nabla y^{}_\mathscr{T},\nabla v^{}_\mathscr{T})_{L^2(\Omega)}+(a(\cdot,y_\T),v_\T)_{L^2(\Omega)}  =  (u^{}_\mathscr{T},v^{}_\mathscr{T})^{}_{L^2(\Omega)}
\end{equation}
for all $v^{}_\mathscr{T} \in \mathbb{V}(\mathscr{T})$ and the discrete constraints $u^{}_{\mathscr{T}} \in \mathbb{U}_{ad}(\T)$. This problem admits at least a solution \cite[section 7]{MR3586845}. In addition, if $\bar u^{}_\mathscr{T}$ denotes a local solution, then
\begin{equation*}
\label{eq:discrete_opt_system}
 (\bar{p}^{}_\mathscr{T}+\nu\bar{u}^{}_\mathscr{T},u^{}_\mathscr{T}-\bar{u}^{}_\mathscr{T})^{}_{L^2(\Omega)}  \geq  0 \quad \forall\ u^{}_\mathscr{T} \in \mathbb{U}_{ad}(\T),
\end{equation*}
where $\bar{p}^{}_\mathscr{T} \in \mathbb{V}(\T)$ is such that
\begin{equation}
\label{eq:discrete_adjoint_equation}
(\nabla w^{}_\mathscr{T},\nabla \bar{p}^{}_\mathscr{T})_{L^2(\Omega)}+\left(\tfrac{\partial a}{\partial y}(\cdot,\bar{y}_\T)\bar{p}_\T,w_\T\right)_{L^2(\Omega)}    =   (\bar{y}^{}_\mathscr{T}-y^{}_{\Omega},w^{}_\mathscr{T})^{}_{L^2(\Omega)} 
\end{equation}
for all $w^{}_\mathscr{T} \in \mathbb{V}(\T)$.

Define, on the basis of the projection operator \eqref{def:projector_pi}, the auxiliary variable
\begin{equation}\label{def:control_tilde}
\tilde{u}:=\Pi_{[\texttt{a},\texttt{b}]}(-\nu^{-1}\bar{p}_\T).
\end{equation}
Notice that $\tilde{u}\in\mathbb{U}_{ad}$ satisfies the following variational inequality \cite[Lemma 2.26]{Troltzsch}
\begin{equation}\label{eq:var_ineq_u_tilde}
(\bar{p}_\T+\nu\tilde{u},u-\tilde{u})_{L^2(\Omega)}\geq 0 \quad \forall\ u\in \mathbb{U}_{ad}.
\end{equation}

The following result is instrumental for our a posteriori error analysis.

\begin{theorem}[auxiliary estimate]\label{thm:error_bound_control_tilde}
Suppose that assumptions \textnormal{\ref{A1}--\ref{A3}} hold. Let $\bar u \in \mathbb{U}_{ad}$ be a local solution to \eqref{def:weak_ocp}--\eqref{eq:weak_st_eq} satisfying the sufficient second order optimality condition \eqref{eq:second_order_2_2}, or equivalently \eqref{eq:second_order_equivalent}. Let $\mathsf{M}$ be a positive constant such that 
$\max \{ \| \bar{u}+\theta_{\T}(\tilde{u}-\bar{u})\|_{L^{\infty}(\Omega)}, \|\tilde{u} - \bar u \|_{L^{\infty}(\Omega)}\} \leq \mathsf{M}$ with $\theta_{\T} \in (0,1)$.  Let $\bar u_{\T}$ be a local minimum of the discrete optimal control problem and $\T$ be a mesh such that 
\begin{equation}\label{eq:assumption_mesh}
\|\bar{p}-\bar{p}_\T\|_{L^\infty(\Omega)} \leq \min\{\nu\mu(2C_{\mathsf{M}})^{-1},\tau/2\}.
\end{equation}
Then
$\tilde{u}-\bar{u}\in C_{\bar{u}}^\tau$ and
\begin{equation}\label{eq:inequality_control_tilde}
\frac{\mu}{2}\|\bar{u}-\tilde{u}\|_{L^2(\Omega)}^2
\leq
(j'(\tilde{u})-j'(\bar{u}))(\tilde{u}-\bar{u}).
\end{equation}
The constant $C_{\mathsf{M}}$ is given by \eqref{eq:estimate_second_der} while the auxiliary variable $\tilde{u}$ is defined in \eqref{def:control_tilde}.
\end{theorem}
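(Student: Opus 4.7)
The plan is to split the theorem into its two assertions and handle them in order. First I would establish the inclusion $\tilde u - \bar u \in C_{\bar u}^\tau$, then use it together with the sufficient second order condition in its equivalent coercive form \eqref{eq:second_order_equivalent} and a mean value expansion of $j'$ to obtain the quadratic bound \eqref{eq:inequality_control_tilde}. The projector identities \eqref{eq:projection_control} and \eqref{def:control_tilde} and the nonexpansiveness of $\Pi_{[\texttt{a},\texttt{b}]}$ will be the main technical tools.

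For the cone inclusion, I would first check the sign restrictions in \eqref{eq:cone_def}. Since $\tilde u, \bar u \in [\texttt{a},\texttt{b}]$, whenever $\bar u(x) = \texttt{a}$ we trivially have $\tilde u(x) - \bar u(x) \ge 0$, and analogously $\tilde u(x) - \bar u(x) \le 0$ when $\bar u(x) = \texttt{b}$. For the support condition defining $C_{\bar u}^\tau$ in \eqref{def:critical_cone_tau}, suppose $\bar{\mathfrak p}(x) = \bar p(x) + \nu \bar u(x) > \tau$. Combined with the sign structure of $\bar{\mathfrak p}$ recorded just after \eqref{eq:var_ineq}, this forces $\bar u(x) = \texttt{a}$ and hence $-\nu^{-1}\bar p(x) \le \texttt{a}$, i.e., $\bar p(x) \ge -\nu\texttt{a}$ and in fact $\bar p(x) > \tau - \nu\texttt{a}$. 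Using the hypothesis \eqref{eq:assumption_mesh}, which gives $\|\bar p - \bar p_\T\|_{L^\infty(\Omega)} \le \tau/2$, I conclude $\bar p_\T(x) > \tau/2 - \nu\texttt{a} > -\nu\texttt{a}$, whence $-\nu^{-1}\bar p_\T(x) < \texttt{a}$ and therefore $\tilde u(x) = \texttt{a} = \bar u(x)$. The symmetric argument handles $\bar{\mathfrak p}(x) < -\tau$, so $(\tilde u - \bar u)(x) = 0$ whenever $|\bar{\mathfrak p}(x)| > \tau$.

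For the second claim I would apply the mean value theorem to $j'$ along the segment $[\bar u,\tilde u]$ to write
\begin{equation*}
(j'(\tilde u) - j'(\bar u))(\tilde u - \bar u) = j''(\bar u + \theta_\T(\tilde u - \bar u))(\tilde u - \bar u)^2,
\end{equation*}
for some $\theta_\T \in (0,1)$. I then insert and subtract $j''(\bar u)(\tilde u - \bar u)^2$ and control the difference by \eqref{eq:estimate_second_der}, which is admissible because of the hypothesis on $\mathsf{M}$. This yields
\begin{equation*}
(j'(\tilde u) - j'(\bar u))(\tilde u - \bar u) \ge j''(\bar u)(\tilde u - \bar u)^2 - C_{\mathsf{M}} \|\theta_\T(\tilde u - \bar u)\|_{L^\infty(\Omega)} \|\tilde u - \bar u\|_{L^2(\Omega)}^2.
\end{equation*}
Since $\tilde u - \bar u \in C_{\bar u}^\tau$ by the first step, \eqref{eq:second_order_equivalent} bounds $j''(\bar u)(\tilde u - \bar u)^2$ from below by $\mu\|\tilde u - \bar u\|_{L^2(\Omega)}^2$. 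For the perturbation term I use the nonexpansiveness of $\Pi_{[\texttt{a},\texttt{b}]}$ together with \eqref{eq:projection_control} and \eqref{def:control_tilde} to get $\|\tilde u - \bar u\|_{L^\infty(\Omega)} \le \nu^{-1}\|\bar p - \bar p_\T\|_{L^\infty(\Omega)}$, and then \eqref{eq:assumption_mesh} gives $C_{\mathsf{M}}\|\tilde u - \bar u\|_{L^\infty(\Omega)} \le \mu/2$. Assembling these estimates yields \eqref{eq:inequality_control_tilde}.

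The only subtle point is the first step: the construction of the bound \eqref{eq:assumption_mesh} is engineered precisely so that the $L^\infty$ closeness of $\bar p$ and $\bar p_\T$ is simultaneously strong enough (i) to transport the strict sign information $|\bar{\mathfrak p}| > \tau$ from the continuous adjoint to the discrete one without crossing the active constraint, and (ii) to make the Lipschitz-in-$L^\infty$ perturbation of $j''$ fit inside half of the coercivity constant $\mu$. Everything else is a direct application of the second order sufficient condition and standard projector estimates.
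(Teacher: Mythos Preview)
Your proposal is correct and follows essentially the same approach as the paper's proof: both establish the cone inclusion $\tilde u-\bar u\in C_{\bar u}^\tau$ via the projection formulas and the $L^\infty$ closeness \eqref{eq:assumption_mesh}, and then combine the coercive second order condition \eqref{eq:second_order_equivalent} with a mean value expansion of $j'$ and the perturbation estimate \eqref{eq:estimate_second_der}. The only cosmetic differences are that the paper reverses the order of the two steps, and in the cone-inclusion argument it first bounds $\|(\bar p+\nu\bar u)-(\bar p_\T+\nu\tilde u)\|_{L^\infty(\Omega)}\le 2\|\bar p-\bar p_\T\|_{L^\infty(\Omega)}$ and then transfers the strict sign of $\bar{\mathfrak p}$ to $\bar p_\T+\nu\tilde u$, whereas you transfer the sign directly at the level of $\bar p_\T$; both routes are equivalent.
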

\begin{proof}
We proceed in two steps:

\emph{Step 1}. Let us assume, for the moment, that $\tilde{u}-\bar{u}\in C_{\bar{u}}^\tau$, with $C_{\bar{u}}^\tau$ defined in \eqref{def:critical_cone_tau}. Since $\bar u$ satisfies the sufficient second order optimality condition \eqref{eq:second_order_equivalent}, we are thus allow to set $v=\tilde{u}-\bar{u}$ there. This yields
\begin{equation}\label{eq:diff_1}
\mu\|\tilde{u}-\bar{u}\|_{L^2(\Omega)}^2 \leq j''(\bar{u})(\tilde{u}-\bar{u})^2.
\end{equation}
On the other hand, in view of the mean value theorem, we obtain, for some $\theta_{\T} \in (0,1)$, 
\begin{equation*}
\label{eq:mean_value_identity}
(j'(\tilde{u})-j'(\bar{u}))(\tilde{u}-\bar{u})=j''(\zeta)(\tilde{u}-\bar{u})^2,
\end{equation*}
with $\zeta=\bar{u}+\theta_{\T}(\tilde{u}-\bar{u})$. Thus, in view of \eqref{eq:diff_1}, we arrive at
\begin{align}\label{eq:ineq_u_tilde_bar}
\mu\|\tilde{u}-\bar{u}\|_{L^2(\Omega)}^2 
& \leq (j'(\tilde{u})-j'(\bar{u}))(\tilde{u}-\bar{u}) + (j''(\bar{u})-j''(\zeta))(\tilde{u}-\bar{u})^2.
\end{align}
Since $\mathsf{M} > 0$ is such that 
$\max \{ \| \bar{u}+\theta_{\T}(\tilde{u}-\bar{u})\|_{L^{\infty}(\Omega)}, \|\tilde{u} - \bar u \|_{L^{\infty}(\Omega)}\} \leq \mathsf{M}$  and $j$ is of class $C^2$ in $L^2(\Omega)$, we can thus apply \eqref{eq:estimate_second_der} to derive
\begin{equation*}
(j''(\bar{u})-j''(\zeta))(\tilde{u}-\bar{u})^2 
\leq C_{\mathsf{M}} \|\tilde{u}-\bar{u}\|_{L^\infty(\Omega)}\|\tilde{u}-\bar{u}\|_{L^2(\Omega)}^2,
\end{equation*}
where we have also used that $\theta_{\T} \in (0,1)$. Invoke \eqref{eq:projection_control} and \eqref{def:control_tilde}, the Lipschitz property of the projection operator $\Pi_{[\texttt{a},\texttt{b}]}$, defined in \eqref{def:projector_pi}, and assumption \eqref{eq:assumption_mesh}, to arrive at
\begin{equation*}
(j''(\bar{u})-j''(\zeta))(\tilde{u}-\bar{u})^2 
\leq C_{\mathsf{M}}{\nu}^{-1} \|\bar{p} - \bar{p}_\T \|_{L^\infty(\Omega)}\|\tilde{u}-\bar{u}\|_{L^2(\Omega)}^2
\leq \frac{\mu}{2}\|\tilde{u}-\bar{u}\|_{L^2(\Omega)}^2.
\end{equation*}
Replacing this inequality into \eqref{eq:ineq_u_tilde_bar} allows us to conclude the desired inequality \eqref{eq:inequality_control_tilde}.

\emph{Step 2}. We now prove that $\tilde{u}-\bar{u}\in C_{\bar{u}}^\tau$. Since $\tilde{u}\in \mathbb{U}_{ad}$, we can immediately conclude that $\tilde{u}-\bar{u}\geq 0$ if $\bar{u}=\texttt{a}$ and that $\tilde{u}-\bar{u}\leq 0$ if $\bar{u}=\texttt{b}$. These arguments reveal that $v = \tilde{u}-\bar{u}$ satisfies \eqref{eq:cone_def}. It thus suffices to verify the remaining condition in \eqref{def:critical_cone_tau}. To accomplish this task, we first use the triangle inequality and invoke the Lipschitz property of $\Pi_{[\texttt{a},\texttt{b}]}$, in conjunction with \eqref{eq:assumption_mesh}, to obtain
\begin{equation}\label{eq:difference_d}
\|\bar{p}+\nu \bar{u}-(\bar{p}_\T+\nu\tilde{u})\|_{L^\infty(\Omega)}\leq 2\|\bar{p}-\bar{p}_\T\|_{L^\infty(\Omega)}<\tau.
\end{equation}

Now, let $\xi\in \Omega$ be such that $\bar{\mathfrak{p}}(\xi) = (\bar{p}+\nu \bar{u})(\xi)>\tau$. Since $\tau > 0$, this implies that
$
\bar{u}(\xi)> - \nu^{-1}\bar{p}(\xi).
$
Therefore, from the projection formula \eqref{eq:projection_control}, we conclude that $\bar{u}(\xi)=\texttt{a}$. On the other hand, since $\xi\in \Omega$ is such that $(\bar{p}+\nu \bar{u})(\xi)>\tau$, from \eqref{eq:difference_d} we can conclude that
\[
(\bar{p}_\T+\nu\tilde{u})(\xi)=\bar{p}_\T(\xi)+\nu\tilde{u}(\xi)>0,\]
and thus that
$
\tilde{u}(\xi)> -\nu^{-1}\bar{p}_\T(\xi).
$
This, on the basis of the definition of the auxiliary variable $\tilde{u}$, given in \eqref{def:control_tilde}, yields that $\tilde{u}(\xi)=\texttt{a}$. Consequently, $\bar{u}(\xi)=\tilde{u}(\xi)=\texttt{a}$, and thus $(\tilde{u}-\bar{u})(\xi)=0$. Similar arguments allow us to conclude that, if $\bar{\mathfrak{p}}(\xi) =(\bar{p}+\nu \bar{u})(\xi)<-\tau$, then $(\tilde{u}-\bar{u})(\xi)=0$. This concludes the proof.
\end{proof}

\section{A posteriori error analysis: Reliability estimates}
\label{sec:reliability}

In this section, we devise and analyze an a posteriori error estimator for the discretization \eqref{eq:discrete_state_equation}--\eqref{eq:discrete_adjoint_equation} of the optimal control problem \eqref{def:weak_ocp}--\eqref{eq:weak_st_eq}.

To simplify the exposition of the material, we define, for $(v,w,z) \in H_0^1(\Omega)\times H_0^1(\Omega)\times L^2(\Omega)$, the norm 
\begin{equation}\label{def:total_error}
\VERT{(v,w,z)\VERT}_{\Omega} := \| \nabla v \|_{L^{2}(\Omega)} + \| \nabla w \|_{L^{2}(\Omega)} + \| z \|_{L^{2}(\Omega)}. 
\end{equation}

The goal of this section is to obtain an upper bound for the error in the norm $\VERT{ \cdot \VERT}_{\Omega}$. This will be obtained on the basis of estimates on the error between the solution to the discretization \eqref{eq:discrete_state_equation}--\eqref{eq:discrete_adjoint_equation} and auxiliary variables that we define in what follows. Let $\hat y \in H_{0}^{1}(\Omega)$ be the solution to
\begin{equation}\label{eq:hat_functions}
(\nabla\hat{y},\nabla v)_{L^2(\Omega)}+(a(\cdot,\hat{y}),v)_{L^2(\Omega)} =  (\bar{u}^{}_\mathscr{T},v)^{}_{L^2(\Omega)} \quad \forall\ v \in H_0^1(\Omega).
\end{equation}
Define
\begin{equation}\label{eq:error_estimator_state_eq}
\mathcal{E}_{st,T}^2:=h_T^2\|\bar{u}_\T-a(\cdot,\bar{y}_\T)\|_{L^2(T)}^2+h_T\|\llbracket\nabla \bar{y}_\T\cdot\boldsymbol{\nu}\rrbracket\|_{L^2(\partial T\setminus\partial\Omega)}^2,
\ \
\mathcal{E}_{st}^2:=\sum_{T\in\T}\mathcal{E}_{st,T}^2.
\end{equation}
An application of Theorem \ref{thm:global_reli_weak} immediately yields the a posteriori error bound
\begin{equation}\label{eq:estimate_state_hat_discrete}
\|\nabla (\hat{y}-\bar{y}_\T)\|_{L^2(\Omega)}
\lesssim \mathcal{E}_{st}.
\end{equation}

Let $\hat p \in H_{0}^{1}(\Omega)$ be the solution to
\begin{equation}\label{eq:hat_functions_p}
(\nabla w,\nabla \hat{p})_{L^2(\Omega)}+\left(\tfrac{\partial a}{\partial y}(\cdot,\bar{y}_\T)\hat{p},w\right)_{L^2(\Omega)} =  (\bar{y}_\T-y^{}_{\Omega},w)^{}_{L^2(\Omega)} \quad \forall\ w\in H_0^1(\Omega).
\end{equation}
Define, for $T\in\T$, the local error indicators
\begin{equation}\label{eq:indicators_adjoint_eq}
\mathcal{E}_{ad,T}^2:=h_T^2\|\bar{y}_\T-y_\Omega-\tfrac{\partial a}{\partial y}(\cdot,\bar{y}_\T)\bar{p}_\T\|_{L^2(T)}^2+h_T\|\llbracket\nabla \bar{p}_\T\cdot\boldsymbol{\nu}\rrbracket\|_{L^2(\partial T\setminus\partial\Omega)}^2,
\end{equation}
and the a posteriori error estimator
\begin{equation}\label{eq:error_estimator_adjoint_eq}
\mathcal{E}_{ad}:=\left(\sum_{T\in\T}\mathcal{E}_{ad,T}^2\right)^{\frac{1}{2}}.
\end{equation}

The following result yields an upper bound for the error $\|\nabla(\hat{p}-\bar{p}_\T)\|_{L^2(\Omega)}$ in terms of the computable quantity $\mathcal{E}_{ad}$.

\begin{lemma}[estimate for $\hat{p}-\bar{p}_\T$]\label{lemma:estimate_aux_variables}
Suppose that assumptions \textnormal{\ref{A1}}--\textnormal{\ref{A3}} hold. Let $\bar u \in \mathbb{U}_{ad}$ be a local solution to \eqref{def:weak_ocp}--\eqref{eq:weak_st_eq}. Let $\bar u_{\T}$ be a local minimum of the discretization \eqref{eq:discrete_state_equation}--\eqref{eq:discrete_adjoint_equation} with $\bar{y}_{\T}$ and $\bar{p}_\T$ being the associated state and adjoint state, respectively. Then, the auxiliary variable $\hat{p}$, defined in \eqref{eq:hat_functions_p}, satisfies 
\begin{equation}\label{eq:adjoint_hat_estimate}
\|\nabla(\hat{p}-\bar{p}_\T)\|_{L^2(\Omega)}\lesssim \mathcal{E}_{ad}.
\end{equation}
The hidden constant is independent of the solution to \eqref{def:weak_ocp}--\eqref{eq:weak_st_eq}, its finite element approximation, the size of the elements in the mesh $\T$, and $\#\T$.
\end{lemma}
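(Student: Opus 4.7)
The estimate is essentially a reliability bound for a linear elliptic problem with a nonnegative reaction coefficient, and the argument closely parallels the proof of Theorem \ref{thm:global_reli_weak}. The plan is to test the error equation against an arbitrary $w\in H_0^1(\Omega)$, exploit Galerkin orthogonality between \eqref{eq:hat_functions_p} and \eqref{eq:discrete_adjoint_equation} against the Cl\'ement interpolant $I_\T w \in \mathbb{V}(\T)$, integrate by parts elementwise, and finally set $w=\hat p-\bar p_\T$ to absorb the gradient term on the left while discarding the reaction contribution via assumption \textnormal{\ref{A2}}.

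More concretely, first I would subtract \eqref{eq:discrete_adjoint_equation} (tested against $I_\T w$) from \eqref{eq:hat_functions_p} (tested against $w$) to obtain
\begin{equation*}
(\nabla w,\nabla(\hat p-\bar p_\T))_{L^2(\Omega)} + \bigl(\tfrac{\partial a}{\partial y}(\cdot,\bar y_\T)(\hat p-\bar p_\T),w\bigr)_{L^2(\Omega)} = \mathcal{R}(w-I_\T w),
\end{equation*}
where $\mathcal{R}(w-I_\T w)$ collects the element residual $\bar y_\T-y_\Omega-\tfrac{\partial a}{\partial y}(\cdot,\bar y_\T)\bar p_\T$ integrated against $w-I_\T w$ on each $T$ together with the jump residual $\llbracket\nabla\bar p_\T\cdot\boldsymbol{\nu}\rrbracket$ integrated on each interior side. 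The standard local approximation properties of $I_\T$, combined with the finite overlap of the patches $\mathcal{N}_T$ from \eqref{eq:patch}, yield
\begin{equation*}
|\mathcal{R}(w-I_\T w)| \lesssim \Bigl(\sum_{T\in\T}\mathcal{E}_{ad,T}^2\Bigr)^{1/2}\|\nabla w\|_{L^2(\Omega)} = \mathcal{E}_{ad}\,\|\nabla w\|_{L^2(\Omega)}.
\end{equation*}

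Finally, choosing $w=\hat p-\bar p_\T\in H_0^1(\Omega)$, assumption \textnormal{\ref{A2}} implies $\tfrac{\partial a}{\partial y}(\cdot,\bar y_\T)\geq 0$ a.e., so the reaction term on the left is nonnegative and can be dropped, leading to $\|\nabla(\hat p-\bar p_\T)\|_{L^2(\Omega)}^2 \lesssim \mathcal{E}_{ad}\|\nabla(\hat p-\bar p_\T)\|_{L^2(\Omega)}$, which is \eqref{eq:adjoint_hat_estimate}. I do not foresee a genuine obstacle; the only mild points are ensuring that the residual term in \eqref{eq:indicators_adjoint_eq} is meaningful (which follows since $\bar y_\T,\bar p_\T\in\mathbb{V}(\T)\subset L^\infty(\Omega)$ and thus \textnormal{\ref{A3}} gives $\tfrac{\partial a}{\partial y}(\cdot,\bar y_\T)\in L^\infty(\Omega)$) and keeping track of the dependence of constants so that the hidden constant remains independent of the mesh and of $\#\T$.
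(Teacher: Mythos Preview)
Your proposal is correct and follows essentially the same route as the paper: Galerkin orthogonality between \eqref{eq:hat_functions_p} and \eqref{eq:discrete_adjoint_equation} with the Cl\'ement interpolant, elementwise integration by parts to expose the residual and jump terms, the standard approximation bounds for $I_\T$, and finally the choice $w=\hat p-\bar p_\T$ together with assumption \textnormal{\ref{A2}} to discard the reaction term. Your additional remark on the well-definedness of the residual via \textnormal{\ref{A3}} is a fine sanity check but not needed beyond what the paper already assumes.
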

\begin{proof}
We proceed as in the proof of Theorem \ref{thm:global_reli_weak}. Let $w \in H_0^1(\Omega)$. Since $\hat p$ solves \eqref{eq:hat_functions_p}, we invoke Galerkin orthogonality and an elementwise integration by parts formula to conclude that
\begin{multline*}
(\nabla w,\nabla (\hat{p}-\bar{p}_\T))_{L^2(\Omega)}+\left(\tfrac{\partial a}{\partial y}(\cdot,\bar{y}_\T)(\hat{p}-\bar{p}_\T),w\right)_{L^2(\Omega)}
 \\
 =
 \sum_{T\in\T}\left(\bar{y}_\T-y^{}_{\Omega}-\tfrac{\partial a}{\partial y}(\cdot,\bar{y}_\T)\bar{p}_\T,w-I_\T w\right)^{}_{L^2(T)}+\sum_{S\in\Sides}(\llbracket \nabla \bar{p}_\T\cdot\boldsymbol{\nu}\rrbracket,w-I_\T w)_{L^2(S)}.
\end{multline*}
Standard approximation properties for $I_\T$ and the finite overlapping property of stars allow us to conclude that
\begin{multline*}
(\nabla w,\nabla (\hat{p}-\bar{p}_\T))_{L^2(\Omega)}+\left(\tfrac{\partial a}{\partial y}(\cdot,\bar{y}_\T)(\hat{p}-\bar{p}_\T),w\right)_{L^2(\Omega)}\lesssim\\
\left(\sum_{T\in\T}h_T^2\|\bar{y}_\T-y_\Omega-\tfrac{\partial a}{\partial y}(\cdot,\bar{y}_\T)\bar{p}_\T\|_{L^2(T)}^2+h_T\|\llbracket\nabla \bar{p}_\T\cdot\boldsymbol{\nu}\rrbracket\|_{L^2(\partial T\setminus\partial\Omega)}^2\right)^{\frac{1}{2}}\!\|\nabla w\|_{L^2(\Omega)}.
\end{multline*}
Set $w=\hat{p}-\bar{p}_\T$ and invoke assumption \ref{A2} to conclude. 
\end{proof}

We define a global error estimator associated to the discretization of the optimal control variable as follows:
\begin{equation}\label{eq:error_estimator_control_var}
\mathcal{E}_{ct,T}^2:=\|\tilde{u}-\bar{u}_\T\|_{L^2(T)}^2,\quad 
\mathcal{E}_{ct}:=\left(\sum_{T\in\T}\mathcal{E}_{ct,T}^2\right)^\frac{1}{2}.
\end{equation}
We recall that the auxiliary variable $\tilde{u}$ is defined as in \eqref{def:control_tilde}.

The following two auxiliary variables, related to $\tilde{u}\in \mathbb{U}_{ad}\subset L^2(\Omega)$, will be of particular importance for our analysis. The variable $\tilde y \in H_0^1(\Omega)$, which solves
\begin{equation*}
(\nabla\tilde{y},\nabla v)_{L^2(\Omega)}+(a(\cdot,\tilde{y}),v)_{L^2(\Omega)} = (\tilde{u},v)^{}_{L^2(\Omega)} \quad \forall\ v \in H_0^1(\Omega),
\end{equation*}
and $\tilde p \in H_0^1(\Omega)$, which is defined as the solution to
\begin{equation*}
(\nabla w,\nabla \tilde{p})_{L^2(\Omega)}+\left(\tfrac{\partial a}{\partial y}(\cdot,\tilde{y})\tilde{p},w\right)_{L^2(\Omega)} =  (\tilde{y}-y^{}_{\Omega},w)^{}_{L^2(\Omega)} \quad 
\forall \ w\in H_0^1(\Omega).
\end{equation*}

After all these definitions and preparations, we define an a posteriori error estimator for the optimal control problem \eqref{def:weak_ocp}--\eqref{eq:weak_st_eq}, which can be decomposed as the sum of three contributions:
\begin{equation}\label{def:error_estimator_ocp}
\mathcal{E}_{ocp}^2:=\mathcal{E}_{st}^2 + \mathcal{E}_{ad}^2 + \mathcal{E}_{ct}^2.
\end{equation}
The estimators $\mathcal{E}_{st}$, $\mathcal{E}_{ad}$, and $\mathcal{E}_{ct}$, are defined as in \eqref{eq:error_estimator_state_eq}, \eqref{eq:error_estimator_adjoint_eq}, and \eqref{eq:error_estimator_control_var}, respectively.

We are now ready to state and prove the main result of this section.

\begin{theorem}[global reliability]\label{thm:control_bound}
Suppose that assumptions \textnormal{\ref{A1}}--\textnormal{\ref{A3}} hold. Let $\bar u \in \mathbb{U}_{ad}$ be a local solution to \eqref{def:weak_ocp}--\eqref{eq:weak_st_eq} satisfying the sufficient second order condition \eqref{eq:second_order_2_2}, or equivalently \eqref{eq:second_order_equivalent}. Let $\bar u_{\T}$ be a local minimum of the associated discrete optimal control problem with $\bar{y}_{\T}$ and $\bar{p}_\T$ being the corresponding state and adjoint state, respectively. Let $\T$ be a mesh such that \eqref{eq:assumption_mesh} holds, then
\begin{equation}\label{eq:global_rel}
\VERT{(\bar{y} - \bar{y}_{\T},\bar{p} - \bar{p}_{\T},\bar{u} - \bar{u}_{\T})\VERT}_{\Omega}
\lesssim 
\mathcal{E}_{ocp}.
\end{equation}
The hidden constant is independent of the continuous and discrete optimal variables, the size of the elements in the mesh $\T$, and $\#\T$.
\end{theorem}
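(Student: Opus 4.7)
The plan is to bound the three components of $\VERT{(\bar y - \bar y_\T, \bar p - \bar p_\T, \bar u - \bar u_\T)\VERT}_\Omega$ separately, using the auxiliary variables $\hat y, \hat p, \tilde u, \tilde y, \tilde p$ introduced above as stepping stones. The key insight is that the state and adjoint estimates reduce to the control estimate via stability, and the control estimate is obtained from the second-order sufficient condition via Theorem \ref{thm:error_bound_control_tilde}.

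\textbf{Step 1: Reduce state and adjoint errors to the control error.} By the triangle inequality, $\|\nabla(\bar y - \bar y_\T)\|_{L^2(\Omega)} \leq \|\nabla(\bar y - \hat y)\|_{L^2(\Omega)} + \|\nabla(\hat y - \bar y_\T)\|_{L^2(\Omega)}$. The second term is $\lesssim \mathcal{E}_{st}$ by \eqref{eq:estimate_state_hat_discrete}. For the first term, I subtract the equations for $\bar y$ and $\hat y$, test with $\bar y - \hat y$, and use \eqref{eq:monotone_operator} plus Cauchy--Schwarz to obtain $\|\nabla(\bar y - \hat y)\|_{L^2(\Omega)} \lesssim \|\bar u - \bar u_\T\|_{L^2(\Omega)}$. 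Similarly, $\|\nabla(\bar p - \bar p_\T)\|_{L^2(\Omega)} \lesssim \|\nabla(\bar p - \hat p)\|_{L^2(\Omega)} + \mathcal{E}_{ad}$ via Lemma \ref{lemma:estimate_aux_variables}; subtracting the adjoint equations for $\bar p$ and $\hat p$ and using assumption \ref{A2}, the local Lipschitz property \eqref{eq:Lipschitz_cont} of $\partial a/\partial y$, and the $L^\infty$-bound on $\hat p$ yields $\|\nabla(\bar p - \hat p)\|_{L^2(\Omega)} \lesssim \|\bar y - \bar y_\T\|_{L^2(\Omega)}$, which in turn is controlled by the state error already discussed.

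\textbf{Step 2: Control error via the auxiliary variable $\tilde u$.} Write $\|\bar u - \bar u_\T\|_{L^2(\Omega)} \leq \|\bar u - \tilde u\|_{L^2(\Omega)} + \|\tilde u - \bar u_\T\|_{L^2(\Omega)} = \|\bar u - \tilde u\|_{L^2(\Omega)} + \mathcal{E}_{ct}$. Theorem \ref{thm:error_bound_control_tilde}, whose applicability is guaranteed by the hypothesis \eqref{eq:assumption_mesh}, yields
$$\tfrac{\mu}{2}\|\bar u - \tilde u\|_{L^2(\Omega)}^2 \leq (j'(\tilde u) - j'(\bar u))(\tilde u - \bar u).$$
Recalling that $j'(u)v = (\mathfrak{p}_u + \nu u, v)_{L^2(\Omega)}$ with $\mathfrak{p}_u$ the adjoint associated to $u$, the right-hand side equals $(\tilde p + \nu \tilde u - \bar p - \nu \bar u, \tilde u - \bar u)_{L^2(\Omega)}$. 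I introduce $\bar p_\T$ by adding and subtracting it:
$$(j'(\tilde u) - j'(\bar u))(\tilde u - \bar u) = (\tilde p - \bar p_\T, \tilde u - \bar u)_{L^2(\Omega)} + (\bar p_\T + \nu \tilde u, \tilde u - \bar u)_{L^2(\Omega)} - (\bar p + \nu \bar u, \tilde u - \bar u)_{L^2(\Omega)}.$$
The last two terms are non-positive: the middle term by \eqref{eq:var_ineq_u_tilde} with test $u = \bar u \in \mathbb{U}_{ad}$, and the rightmost by \eqref{eq:var_ineq} with test $u = \tilde u \in \mathbb{U}_{ad}$. Therefore $(j'(\tilde u) - j'(\bar u))(\tilde u - \bar u) \leq \|\tilde p - \bar p_\T\|_{L^2(\Omega)}\|\tilde u - \bar u\|_{L^2(\Omega)}$.

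\textbf{Step 3: Estimate $\|\tilde p - \bar p_\T\|_{L^2(\Omega)}$ in terms of the estimator.} I split $\|\tilde p - \bar p_\T\|_{L^2(\Omega)} \leq \|\tilde p - \hat p\|_{L^2(\Omega)} + \|\hat p - \bar p_\T\|_{L^2(\Omega)}$; the latter is $\lesssim \mathcal{E}_{ad}$ by Lemma \ref{lemma:estimate_aux_variables} and Poincar\'e. For the former, I subtract the defining equations for $\tilde p$ and $\hat p$, test with $\tilde p - \hat p$, and use \ref{A2} plus the local Lipschitz property of $\partial a/\partial y$ in \eqref{eq:Lipschitz_cont} (exploiting the uniform $L^\infty$-bound on $\hat p$) to get $\|\nabla(\tilde p - \hat p)\|_{L^2(\Omega)} \lesssim \|\tilde y - \bar y_\T\|_{L^2(\Omega)}$. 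A further triangle inequality $\|\tilde y - \bar y_\T\|_{L^2(\Omega)} \leq \|\tilde y - \hat y\|_{L^2(\Omega)} + \|\hat y - \bar y_\T\|_{L^2(\Omega)}$, combined with the stability estimate $\|\nabla(\tilde y - \hat y)\|_{L^2(\Omega)} \lesssim \|\tilde u - \bar u_\T\|_{L^2(\Omega)} = \mathcal{E}_{ct}$ (from the monotonicity \eqref{eq:monotone_operator}) and \eqref{eq:estimate_state_hat_discrete}, gives $\|\tilde p - \bar p_\T\|_{L^2(\Omega)} \lesssim \mathcal{E}_{st} + \mathcal{E}_{ad} + \mathcal{E}_{ct} = \mathcal{E}_{ocp}$ (up to the norm convention).

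\textbf{Step 4: Combine.} Plugging Step 3 into Step 2 yields $\tfrac{\mu}{2}\|\bar u - \tilde u\|_{L^2(\Omega)}^2 \lesssim \mathcal{E}_{ocp}\|\bar u - \tilde u\|_{L^2(\Omega)}$, so $\|\bar u - \tilde u\|_{L^2(\Omega)} \lesssim \mathcal{E}_{ocp}$; together with $\|\tilde u - \bar u_\T\|_{L^2(\Omega)} = \mathcal{E}_{ct}$ this bounds the control error, and then Step 1 closes the state and adjoint estimates. The main obstacle is Step 2: the careful algebraic manipulation inserting $\bar p_\T$ and then invoking the two variational inequalities in precisely the right direction so that the cross-terms drop out, leaving only the tractable $(\tilde p - \bar p_\T, \tilde u - \bar u)_{L^2(\Omega)}$ piece. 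The subsequent estimate of $\|\tilde p - \bar p_\T\|_{L^2(\Omega)}$ in Step 3 is a second mildly delicate point because one must avoid circular dependence on $\|\bar u - \bar u_\T\|_{L^2(\Omega)}$ by routing the perturbation through $\bar u_\T$ (via $\tilde u$ and $\hat y$) rather than through $\bar u$.
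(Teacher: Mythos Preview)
Your proposal is correct and follows essentially the same route as the paper's proof: the same auxiliary variables $\hat y,\hat p,\tilde u,\tilde y,\tilde p$, the same use of Theorem~\ref{thm:error_bound_control_tilde} together with the two variational inequalities \eqref{eq:var_ineq} and \eqref{eq:var_ineq_u_tilde} to reduce $\|\bar u-\tilde u\|_{L^2(\Omega)}$ to $\|\tilde p-\bar p_\T\|_{L^2(\Omega)}$, and the same chain of stability estimates routed through $\hat p$ and $\hat y$ to close without circularity. The only cosmetic differences are the ordering of the steps (you reduce state/adjoint to control first, the paper does control first) and that you invoke a uniform $L^\infty$-bound on $\hat p$ where the paper instead uses the embedding $H^1(\Omega)\hookrightarrow L^4(\Omega)$; both choices are valid here.
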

\begin{proof}
We proceed in four steps.

\emph{Step 1}. The goal of this step is to control the term $\|\bar{u}-\bar{u}_\T\|_{L^2(\Omega)}$. We begin with a simple application of the triangle inequality and write
\begin{equation}\label{eq:triangle_ineq_control}
\|\bar{u}-\bar{u}_\T\|_{L^2(\Omega)}
\leq 
\|\bar{u}-\tilde{u}\|_{L^2(\Omega)} +\mathcal{E}_{ct},
\end{equation}
where $\tilde{u}:=\Pi_{[\texttt{a},\texttt{b}]}\left(-\nu^{-1}\bar{p}_\T\right)$ and $\mathcal{E}_{ct}$ is defined as in \eqref{eq:error_estimator_control_var}. Let us now bound the first term on the right hand side of \eqref{eq:triangle_ineq_control}. To accomplish this task, we set $u = \tilde u$ in \eqref{eq:var_ineq} and $u = \bar u$ in \eqref{eq:var_ineq_u_tilde} to obtain
\[
- j'(\bar u)(\tilde u - \bar u) = - (\bar p + \nu \bar u, \tilde u - \bar u)_{L^2(\Omega)} \leq 0, \qquad - (\bar p_{\T} + \nu \tilde u, \tilde u - \bar u)_{L^2(\Omega)} \geq 0.
\]
In light of these estimates, we invoke \eqref{eq:inequality_control_tilde} to obtain
\begin{align*}
\tfrac{\mu}{2} \|\bar{u}-\tilde{u}\|_{L^2(\Omega)}^2 &  \leq   j'(\tilde{u})(\tilde{u}-\bar{u})-j'(\bar{u})(\tilde{u}-\bar{u}) \leq  j'(\tilde{u})(\tilde{u}-\bar{u})
\\
& = (\tilde p + \nu \tilde u, \tilde{u}-\bar{u} )_{L^2(\Omega)} \leq (\tilde{p}-\bar{p}_\T,\tilde{u}-\bar{u})_{L^2(\Omega)}.
\end{align*}
Adding and subtracting the auxiliary variable $\hat{p}$, defined as the solution to \eqref{eq:hat_functions_p}, and utilizing basic inequalities we arrive at
\begin{equation}
\label{eq:auxiliar_estimate}
\|\bar{u}-\tilde{u}\|_{L^2(\Omega)}^2
\lesssim
(\|\tilde{p}-\hat{p}\|_{L^2(\Omega)}+\|\hat{p}-\bar{p}_\T\|_{L^2(\Omega)})\|\tilde{u}-\bar{u}\|_{L^2(\Omega)}.
\end{equation}
We now invoke a Poincar\'e inequality and the error estimate $\|\nabla(\hat{p}-\bar{p}_\T)\|_{L^2(\Omega)}\lesssim \mathcal{E}_{ad}$, which follows from \eqref{eq:adjoint_hat_estimate}, to obtain
\begin{equation}\label{eq:estimate_u_tilde_u}
\|\bar{u}-\tilde{u}\|_{L^2(\Omega)}
\lesssim
\|\nabla(\tilde{p}-\hat{p})\|_{L^2(\Omega)}+\mathcal{E}_{ad}.
\end{equation}

The rest of this step is dedicated to estimate the term $\|\nabla(\tilde{p}-\hat{p})\|_{L^2(\Omega)}$. To accomplish this task, we first notice that, for every $w \in H_0^1(\Omega)$, $\tilde{p}-\hat{p}\in H_0^1(\Omega)$ solves
\begin{equation*}
\label{eq:tildep-hatp}
(\nabla w,\nabla (\tilde{p}-\hat{p}))_{L^2(\Omega)}+\left(\tfrac{\partial a}{\partial y}(\cdot,\tilde{y})\tilde{p}-\tfrac{\partial a}{\partial y}(\cdot,\bar{y}_\T)\hat{p},w\right)_{L^2(\Omega)}=(\tilde{y}-\bar{y}_\T,w)_{L^2(\Omega)}.
\end{equation*}
Set $w=\tilde{p}-\hat{p}$ and invoke a generalized H\"older's inequality to obtain 
\begin{align*}
&\|\nabla(\tilde{p}-\hat{p})\|_{L^2(\Omega)}^2+\left(\tfrac{\partial a}{\partial y}(\cdot,\tilde{y})(\tilde{p}-\hat{p}),\tilde{p}-\hat{p}\right)_{L^2(\Omega)}
\\ 
\nonumber
&= (\tilde{y}-\bar{y}_\T,\tilde{p}-\hat{p})_{L^2(\Omega)}+\left(
\left[
\tfrac{\partial a}{\partial y}(\cdot,\bar{y}_\T)-\tfrac{\partial a}{\partial y}(\cdot,\tilde{y})
\right]
\hat{p},\tilde{p}-\hat{p}\right)_{L^2(\Omega)}
\\ 
\nonumber
&\leq \|\tilde{y}-\bar{y}_\T\|_{L^2(\Omega)} \|\tilde{p}-\hat{p}\|_{L^2(\Omega)}
+
\left\|\tfrac{\partial a}{\partial y}(\cdot,\bar{y}_\T)-\tfrac{\partial a}{\partial y}(\cdot,\tilde{y}) \right \|_{L^2(\Omega)}\|\hat{p}\|_{L^4(\Omega)}\|\tilde{p}-\hat{p}\|_{L^4(\Omega)}.
\end{align*}
Since $\bar y_{\T}, \tilde y \in L^{\infty}(\Omega)$ and $\frac{\partial a}{\partial y}$ is locally Lipschitz with respect to $y$, we obtain
\begin{equation*}
\|\nabla(\tilde{p}-\hat{p})\|_{L^2(\Omega)}^2 
\lesssim 
\\
\|\tilde{y}-\bar{y}_\T\|_{L^2(\Omega)} 
\left(
\|\tilde{p}-\hat{p}\|_{L^2(\Omega)}+\|\hat{p}\|_{L^4(\Omega)}\|\tilde{p}-\hat{p}\|_{L^4(\Omega)}
\right).
\end{equation*}
We thus use a Poincar\'e inequality and the embedding $H^1(\Omega)\hookrightarrow L^4(\Omega)$ to arrive at
\begin{equation}\label{eq:error_eq_tilde_hat_ad}
\|\nabla(\tilde{p}-\hat{p})\|_{L^2(\Omega)} \lesssim \\
\|\tilde{y}-\bar{y}_\T\|_{L^2(\Omega)} (1+\|\nabla \hat{p}\|_{L^2(\Omega)}).
\end{equation}
Stability estimates for the problems that $\hat{p}$ and $\bar{y}_\T$ solve yield the estimate
\[
\|\nabla\hat{p}\|_{L^2(\Omega)}\lesssim \|y^{}_\Omega\|_{L^2(\Omega)} + \|y^{}_{\T} \|_{L^2(\Omega)}
\lesssim 
 \|y^{}_\Omega\|_{L^2(\Omega)} 
 +
\rho|\Omega|^{\frac{1}{2}},
\]
where $\rho=\max\{|\texttt{a}|,|\texttt{b}|\}$. Replacing this estimate into \eqref{eq:error_eq_tilde_hat_ad}, and invoking, again, a Poincar\'e inequality, we obtain
\begin{equation}\label{eq:estimate_adjoint_tilde_hat}
\|\nabla(\tilde{p}-\hat{p})\|_{L^2(\Omega)}
\lesssim
\|\tilde{y}-\bar{y}_\T\|_{L^2(\Omega)}
\lesssim
\|\nabla(\tilde{y}-\bar{y}_\T)\|_{L^2(\Omega)},
\end{equation}
with a hidden constant that is independent of the continuous and discrete optimal variables, the size of the elements in the mesh $\T$, and $\#\T$ but depends on the continuous problem data.

We now turn our attention to bounding the term $\|\nabla(\tilde{y}-\bar{y}_\T)\|_{L^2(\Omega)}$ in \eqref{eq:estimate_adjoint_tilde_hat}. To accomplish this task, we invoke the auxiliary variable $\hat{y}$, defined as the solution to \eqref{eq:hat_functions}, and use the triangle inequality to obtain
\begin{equation}\label{eq:estimate_state_tilde_hat_2}
\|\nabla(\tilde{y}-\bar{y}_\T)\|_{L^2(\Omega)}
\lesssim
\|\nabla(\tilde{y}-\hat{y})\|_{L^2(\Omega)}+\mathcal{E}_{st},
\end{equation}
where we have also used the a posteriori error estimate \eqref{eq:estimate_state_hat_discrete}. It thus suffices to bound $\|\nabla(\tilde{y}-\hat{y})\|_{L^2(\Omega)}$. To do this, we first notice that $\tilde{y}-\hat{y}\in H_0^1(\Omega)$ solves the problem:
\begin{equation}
\label{eq:tildey-haty}
(\nabla (\tilde{y}-\hat{y}),\nabla v)_{L^2(\Omega)}+(a(\cdot,\tilde{y})-a(\cdot,\hat{y}),v)_{L^2(\Omega)}=(\tilde{u}-\bar{u}_\T,v)_{L^2(\Omega)} \quad \forall\ v\in H_0^1(\Omega).
\end{equation}
Set $v=\tilde{y}-\hat{y}$ and invoke the fact that $a$ is monotone increasing in $y$ \eqref{eq:monotone_operator} to arrive at
$
\|\nabla(\tilde{y}-\hat{y})\|_{L^2(\Omega)}
\lesssim
\|\tilde{u}-\bar{u}_\T\|_{L^2(\Omega)}=\mathcal{E}_{ct}.
$
Replacing this estimate into \eqref{eq:estimate_state_tilde_hat_2} and the obtained one into \eqref{eq:estimate_adjoint_tilde_hat} yield
\begin{equation}\label{eq:estimate_term_p_tilde_hat}
\|\nabla(\tilde{p}-\hat{p})\|_{L^2(\Omega)}\lesssim \mathcal{E}_{st}+\mathcal{E}_{ct}.
\end{equation}
On the basis of \eqref{eq:triangle_ineq_control}, \eqref{eq:estimate_u_tilde_u} and \eqref{eq:estimate_term_p_tilde_hat}, we conclude the a posteriori error estimate
\begin{equation}\label{eq:estimate_error_control}
\|\bar{u}-\bar{u}_\T\|_{L^2(\Omega)}\lesssim \mathcal{E}_{ad}+\mathcal{E}_{st}+\mathcal{E}_{ct}.
\end{equation}

\emph{Step 2}. The goal of this step is to bound $\|\nabla(\bar{y}-\bar{y}_\T)\|_{L^2(\Omega)}$. To accomplish this task, we invoke the auxiliary state $\hat{y}$, defined as the solution to \eqref{eq:hat_functions} and apply the triangle inequality. In fact, we have
\begin{equation}
\label{eq:state_ineq_triangular}
\|\nabla(\bar{y}-\bar{y}_\T)\|_{L^2(\Omega)}
\lesssim  \|\nabla(\bar{y}-\hat{y})\|_{L^2(\Omega)}+\mathcal{E}_{st},
\end{equation}
where we have also used the a posteriori error estimate \eqref{eq:estimate_state_hat_discrete}. It thus suffices to estimate $\|\nabla(\bar{y}-\hat{y})\|_{L^2(\Omega)}$. To achieve this goal, we invoke the state equation \eqref{eq:weak_st_eq}, with $u$ replaced by $\bar{u}$, problem \eqref{eq:hat_functions}, and the monotony of the nonlinear term $a$ \eqref{eq:monotone_operator}. These arguments reveal that
\begin{align*}
\|\nabla(\bar{y}-\hat{y})\|_{L^2(\Omega)}^2 &\leq  (\nabla (\bar{y}-\hat{y}),\nabla(\bar{y}-\hat{y}))_{L^2(\Omega)}+(a(\cdot,\bar{y})-a(\cdot,\hat{y}),\bar{y}-\hat{y})_{L^2(\Omega)}\\
&=(\bar{u}-\bar{u}_\T,\bar{y}-\hat{y})_{L^2(\Omega)} \lesssim \| \bar u - \bar{u}_\T \|_{L^2(\Omega)} \|\nabla(\bar{y}-\hat{y})\|_{L^2(\Omega)}.
\end{align*} 
Consequently,
$
\|\nabla(\bar{y}-\hat{y})\|_{L^2(\Omega)} \lesssim\| \bar{u} - \bar{u}_{\T} \|_{L^{2}(\Omega)}.
$
Replacing this estimate into \eqref{eq:state_ineq_triangular} and utilizing \eqref{eq:estimate_error_control} allow us to conclude that
\begin{equation}\label{eq:estimate_error_state}
\|\nabla(\bar{y}-\bar{y}_\T)\|_{L^2(\Omega)}\lesssim \mathcal{E}_{ad}+\mathcal{E}_{st}+\mathcal{E}_{ct}.
\end{equation}

\emph{Step 3}. We now bound the term $\|\nabla(\bar{p}-\bar{p}_\T)\|_{L^2(\Omega)}$. To accomplish this task, we add and subtract $\hat p$, defined as the solution to \eqref{eq:hat_functions_p}, and use, again, the triangle inequality to obtain that
\begin{equation}
\label{eq:adjoint_ineq_triangular}
\|\nabla(\bar{p}-\bar{p}_\T)\|_{L^2(\Omega)}
\lesssim \|\nabla(\bar{p}-\hat{p})\|_{L^2(\Omega)}+\mathcal{E}_{ad},
\end{equation}
where we have also used the a posteriori error estimate \eqref{eq:adjoint_hat_estimate}. It thus suffices to bound $\|\nabla(\bar{p}-\hat{p})\|_{L^2(\Omega)}$. Set $w=\bar{p}-\hat{p}$ in the weak problem that $\bar{p}-\hat{p}$ solves. This yields
\begin{align*}
\|\nabla(\bar{p}-\hat{p})\|_{L^2(\Omega)}^2
&+\left(\tfrac{\partial a}{\partial y}(\cdot,\bar{y})(\bar{p}-\hat{p}),\bar{p}-\hat{p}\right)_{L^2(\Omega)}\\ 
&=(\bar{y}-\bar{y}_\T,\bar{p}-\hat{p})_{L^2(\Omega)}+\left(\left[\tfrac{\partial a}{\partial y}(\cdot,\bar{y}_\T)-\tfrac{\partial a}{\partial y}(\cdot,\bar{y})\right]\hat{p},\bar{p}-\hat{p}\right)_{L^2(\Omega)}.
\end{align*}
This identity, in view of a generalized H\"older's inequality, the local Lipschitz property of $\frac{\partial a}{\partial y}$, with respect to the $y$ variable, and assumption \ref{A2}, allows us to arrive at
\begin{equation*}
\|\nabla(\bar{p}-\hat{p})\|_{L^2(\Omega)}^2 \lesssim \\
\|\bar{y}-\bar{y}_\T\|_{L^2(\Omega)} (\|\bar{p}-\hat{p}\|_{L^2(\Omega)}+\|\hat{p}\|_{L^4(\Omega)}\|\bar{p}-\hat{p}\|_{L^4(\Omega)}).
\end{equation*}
Using similar ideas to the ones that lead to \eqref{eq:error_eq_tilde_hat_ad} and \eqref{eq:estimate_adjoint_tilde_hat}, we can conclude that
\begin{equation}\label{eq:error_adjoint_eq_2}
\|\nabla(\bar{p}-\hat{p})\|_{L^2(\Omega)}
\lesssim
\|\nabla(\bar{y}-\bar{y}_\T)\|_{L^2(\Omega)}.
\end{equation}
Replacing \eqref{eq:estimate_error_state} into \eqref{eq:error_adjoint_eq_2}, and the obtained one into \eqref{eq:adjoint_ineq_triangular}, we obtain
\begin{equation}\label{eq:estimate_error_adjoint_st}
\|\nabla(\bar{p}-\bar{p}_\T)\|_{L^2(\Omega)}
\lesssim 
\mathcal{E}_{ad}+\mathcal{E}_{st}+\mathcal{E}_{ct}.
\end{equation}

\emph{Step 4}. Combining \eqref{eq:estimate_error_control}, \eqref{eq:estimate_error_state}, and \eqref{eq:estimate_error_adjoint_st} allows us to arrive at \eqref{eq:global_rel}. This concludes the proof.
\end{proof}


\section{A posteriori error analysis: Efficiency estimates}
\label{sec:eff_estimator}
In this section, we prove the local efficiency of the a posteriori error indicators $\mathcal{E}_{st,T}$ and $\mathcal{E}_{ad,T}$ and the global efficiency of  the a posteriori error estimator $\mathcal{E}_{ocp}$. To accomplish this task, we will proceed on the basis of standard residual estimation techniques \cite{MR1885308,MR3059294}.

Let us begin by introducing the following notation: for an edge/face or triangle/tetrahedron $G$, let $\mathcal{V}(G)$ be the set of vertices of $G$. With this notation at hand, we recall, for $T\in\mathscr{T}$ and $S\in\mathscr{S}$, the definition of the standard element and edge bubble functions \cite{MR1885308,MR3059294}
\begin{equation*}
\varphi^{}_{T}=
(d+1)^{(d+1)}\prod_{\textsc{v} \in \mathcal{V}(T)} \lambda^{}_{\textsc{v}},
\qquad
\varphi^{}_{S}=
d^{d} \prod_{\textsc{v} \in \mathcal{V}(S)}\lambda^{}_{\textsc{v}}|^{}_{T'},
\end{equation*}
respectively, where $T' \subset \mathcal{N}_{S}$ and $\lambda_\textsc{v}^{}$ are the barycentric coordinates of $T$. Recall that $\mathcal{N}_{S}$ denotes the patch composed of the two elements of $\mathscr{T}$ that share $S$.

The following identities are essential to perform an efficiency analysis. First, since $\bar y \in H_0^1(\Omega)$ solves \eqref{eq:weak_st_eq}, an elementwise integration by parts formula implies that
\begin{multline}\label{eq:error_eq_state}
(\nabla (\bar{y}-\bar{y}_\T),\nabla v)_{L^2(\Omega)}+(a(\cdot,\bar{y})-a(\cdot,\bar{y}_\T),v)_{L^2(\Omega)}=(\bar{u}-\bar{u}_\T,v)_{L^2(\Omega)} \\
+\sum_{T\in\T}(\bar{u}_\T-a(\cdot,\bar{y}_\T),v)_{L^2(T)}+\sum_{S\in\Sides}(\llbracket\nabla \bar{y}_\T\cdot \nu\rrbracket,v)_{L^2(S)}
\end{multline}
for all $v\in H_0^1(\Omega)$. Second, since $\bar p $ solves \eqref{eq:adj_eq}, similar arguments yield
\begin{multline}\label{eq:error_eq_adjoint}
(\nabla w,\nabla (\bar{p}-\bar{p}_\T))_{L^2(\Omega)}+\left(\tfrac{\partial a}{\partial y}(\cdot,\bar{y})(\bar{p}-\bar{p}_\T),w\right)_{L^2(\Omega)}=(\bar{y}-\bar{y}_\T,w)_{L^2(\Omega)}
\\
+\left(\left[\tfrac{\partial a}{\partial y}(\cdot,\bar{y}_\T)-\tfrac{\partial a}{\partial y}(\cdot,\bar{y})\right]\bar{p}_\T,w\right)_{L^2(\Omega)}+\sum_{S\in\Sides}(\llbracket\nabla \bar{p}_\T\cdot \nu\rrbracket,w)_{L^2(S)}
\\
+\sum_{T\in\T}\left(\left(\bar{y}_\T-\mathscr{P}_{\T} y_\Omega -\tfrac{\partial a}{\partial y}(\cdot,\bar{y}_\T)\bar{p}_\T,w\right)_{L^2(T)}+(\mathscr{P}_{\T} y_\Omega -y_\Omega,w)_{L^2(T)}\right)
\end{multline}
for all $w\in H_0^1(\Omega)$. Here, $\mathscr{P}_{\T}$ denotes the $L^2$-projection onto piecewise linear, over $\T$, functions.

We are ready to prove the local efficiency of the indicator $\mathcal{E}_{st}$ defined in \eqref{eq:error_estimator_state_eq}.

\begin{theorem}[local efficiency of $\mathcal{E}_{st}$]\label{thm:local_eff_state}
Suppose that assumptions \textnormal{\ref{A1}}--\textnormal{\ref{A3}} hold. Let $\bar u \in \mathbb{U}_{ad}$ be a local solution to \eqref{def:weak_ocp}--\eqref{eq:weak_st_eq}. Let $\bar u_{\T}$ be a local minimum of the discretization \eqref{eq:discrete_state_equation}--\eqref{eq:discrete_adjoint_equation} with $\bar{y}_{\T}$ and $\bar{p}_\T$ being the associated state and adjoint state, respectively. Then, for $T\in\T$, the local error indicator $\mathcal{E}_{st,T}$ satisfies 
\begin{equation}\label{eq:local_eff_st}
\mathcal{E}_{st,T}
\lesssim 
\|\nabla(\bar{y}-\bar{y}_\T)\|_{L^2(\mathcal{N}_{T})}+h_T\|\bar{y}-\bar{y}_\T\|_{L^2(\mathcal{N}_{T})}+h_T\|\bar{u}-\bar{u}_\T\|_{L^2(\mathcal{N}_{T})},
\end{equation}
where $\mathcal{N}_{T}$ is defined as in \eqref{eq:patch}. The hidden constant is independent of the continuous and discrete optimal variables, the size of the elements in the mesh $\T$, and $\#\T$.
\end{theorem}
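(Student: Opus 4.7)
The plan is to apply the standard Verf\"urth residual-type bubble function technique, exploiting the error identity \eqref{eq:error_eq_state} that is already laid out in the excerpt. Two local contributions need to be controlled: the interior residual $R_T := \bar{u}_\T - a(\cdot,\bar{y}_\T)$ and the jump residual $J_S := \llbracket \nabla \bar{y}_\T \cdot \boldsymbol{\nu} \rrbracket$ on $S \in \Sides_T \setminus \partial\Omega$. In both cases we test \eqref{eq:error_eq_state} with a carefully chosen compactly supported bubble multiple of the residual in question, rely on bubble-function norm equivalences together with standard inverse inequalities, and invoke the Lipschitz continuity of $a$ in \eqref{eq:Lipschitz_cont} to absorb the nonlinear term.

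For the interior residual, I would first replace $R_T$ by a polynomial surrogate $\bar R_T$ on $T$, for instance $\bar R_T := \bar u_{\T} - \mathscr{P}_\T\bigl(a(\cdot,\bar y_{\T})\bigr)$ with $\mathscr{P}_\T$ an $L^2$-projection onto a suitable polynomial space, and estimate $R_T - \bar R_T$ via \ref{A3} and the uniform $L^{\infty}$ bound on $\bar y_{\T}$. I would then set $v = \varphi_T \bar R_T$, extended by zero outside $T$, in \eqref{eq:error_eq_state}. Because $v$ vanishes on $\partial T$ the jump sum drops out and the volume sum reduces to the term over $T$. The bubble inequalities
\[
\|\bar R_T\|_{L^2(T)}^2 \lesssim (\bar R_T,\varphi_T \bar R_T)_{L^2(T)},\qquad
\|\nabla(\varphi_T \bar R_T)\|_{L^2(T)}\lesssim h_T^{-1}\|\bar R_T\|_{L^2(T)},\qquad
\|\varphi_T \bar R_T\|_{L^2(T)}\lesssim \|\bar R_T\|_{L^2(T)},
\]
combined with $|(a(\cdot,\bar y)-a(\cdot,\bar y_{\T}),v)_{L^2(T)}|\lesssim \|\bar y - \bar y_{\T}\|_{L^2(T)}\|v\|_{L^2(T)}$ from \eqref{eq:Lipschitz_cont}, yield
\[
h_T\|R_T\|_{L^2(T)} \lesssim \|\nabla(\bar y-\bar y_{\T})\|_{L^2(T)} + h_T\|\bar y-\bar y_{\T}\|_{L^2(T)} + h_T\|\bar u - \bar u_{\T}\|_{L^2(T)}.
\]

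For the jump residual on an interior side $S\subset\partial T$, I would take the edge bubble $\varphi_S$ supported on $\mathcal{N}_S$, let $\mathcal{P}(J_S)$ denote a polynomial extension of $J_S$ to $\mathcal{N}_S$, and use $v = \varphi_S\,\mathcal{P}(J_S) \in H^1_0(\mathcal{N}_S)$ in \eqref{eq:error_eq_state}. Standard edge-bubble estimates give $\|J_S\|_{L^2(S)}^2 \lesssim (J_S,v)_{L^2(S)}$, $\|\nabla v\|_{L^2(\mathcal{N}_S)}\lesssim h_S^{-1/2}\|J_S\|_{L^2(S)}$ and $\|v\|_{L^2(\mathcal{N}_S)}\lesssim h_S^{1/2}\|J_S\|_{L^2(S)}$. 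Using the interior bound just obtained on each $T'\in\mathcal{N}_S$ to absorb the volume sum in \eqref{eq:error_eq_state}, and once again the Lipschitz estimate for $a(\cdot,\bar y)-a(\cdot,\bar y_{\T})$, produces the analogous bound for $h_T^{1/2}\|J_S\|_{L^2(S)}$ with $T$ replaced by $\mathcal{N}_T$ on the right-hand side; summing the two contributions over $S\in\Sides_T\setminus\partial\Omega$ and invoking the finite overlap of patches gives \eqref{eq:local_eff_st}.

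The main obstacle I foresee is that $a(\cdot,\bar y_{\T})$ is not polynomial, so the bubble-function norm equivalence cannot be applied to $R_T$ directly; the passage to the polynomial surrogate $\bar R_T$ introduces a truncation term whose control relies on \ref{A3} together with the uniform $L^{\infty}$ bound on $\bar y_{\T}$, and must be of the same order as the bound already present in \eqref{eq:local_eff_st} so as not to appear explicitly. Everything else (a local Poincar\'e inequality on $T$ to absorb $\|v\|_{L^2(T)}$ against $h_T\|\nabla v\|_{L^2(T)}$, and the finite overlap of $\{\mathcal{N}_T\}_{T\in\T}$) is routine.
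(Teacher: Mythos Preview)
Your plan matches the paper's proof: both test \eqref{eq:error_eq_state} with the element bubble $v=\varphi_T R_T$ and then with the edge bubble $v=\varphi_S J_S$, invoke the standard bubble norm equivalences and inverse estimates, and handle the nonlinear term via the local Lipschitz bound \eqref{eq:Lipschitz_cont}.

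The only substantive difference is that you first pass to a polynomial surrogate $\bar R_T=\bar u_\T-\mathscr{P}_\T a(\cdot,\bar y_\T)$ before invoking the inverse inequality, whereas the paper applies the bubble estimates directly to the non-polynomial residual $R_T=\bar u_\T-a(\cdot,\bar y_\T)$ without any projection. Your caution is technically well founded, since the inequalities $\|R\|_{L^2(T)}^2\lesssim\|\varphi_T^{1/2}R\|_{L^2(T)}^2$ and $\|\nabla(\varphi_T R)\|_{L^2(T)}\lesssim h_T^{-1}\|R\|_{L^2(T)}$ are only standard for polynomial $R$. Be aware, however, that the truncation you introduce, $h_T\|a(\cdot,\bar y_\T)-\mathscr{P}_\T a(\cdot,\bar y_\T)\|_{L^2(T)}$, is oscillation of $a$ in the $x$-variable, and assumptions \ref{A1}--\ref{A3} bound only $y$-derivatives of $a$; they say nothing about smoothness in $x$ beyond the Carath\'eodory condition. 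So your expectation that this term is automatically ``of the same order as the bound already present in \eqref{eq:local_eff_st}'' is not justified under the stated hypotheses: a fully rigorous version would require either additional regularity of $x\mapsto a(x,\cdot)$ or an explicit data-oscillation term on the right-hand side. The paper sidesteps this issue entirely by applying the bubble inequalities to the non-polynomial residual without comment, which is a shortcut common in the a~posteriori literature.
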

\begin{proof}
We estimate each term in the definition of the local error indicator $\mathcal{E}_{st,T}$, given in \eqref{eq:error_estimator_state_eq}, separately.

\emph{Step 1}. 
Let $T \in \T$. We first bound the element term $h_T^2\|\bar{u}_\T-a(\cdot,\bar{y}_\T)\|_{L^2(T)}^2$. To accomplish this task, we invoke standard residual estimation techniques \cite{MR1885308,MR3059294}. Set  $v= \varphi_T (\bar{u}_\T-a(\cdot,\bar{y}_\T))$ in \eqref{eq:error_eq_state}. Then, standard properties of the bubble function $\varphi_T$ combined with basic inequalities yield
\begin{multline*}
\|\bar{u}_\T-a(\cdot,\bar{y}_\T)\|_{L^2(T)}^2\lesssim \left(h_T^{-1}\|\nabla(\bar{y}-\bar{y}_\T)\|_{L^2(T)}+\|\bar{u}-\bar{u}_\T\|_{L^2(T)}\right.\\
\left.+\|a(\cdot,\bar{y})-a(\cdot,\bar{y}_\T)\|_{L^2(T)}\right)\|\bar{u}_\T-a(\cdot,\bar{y}_\T)\|_{L^2(T)}.
\end{multline*}
This, in view of the local Lipschitz property of $a$ with respect to $y$ \eqref{eq:Lipschitz_cont}, implies that
\begin{multline*}
h_T^2\|\bar{u}_\T-a(\cdot,\bar{y}_\T)\|^2_{L^2(T)}
\lesssim \|\nabla(\bar{y}-\bar{y}_\T)\|^2_{L^2(T)}\\
+h_T^2\|\bar{u}-\bar{u}_\T\|^2_{L^2(T)}+\, h_T^2\|\bar{y}-\bar{y}_\T\|^2_{L^2(T)}.
\end{multline*}

\emph{Step 2}. Let $T\in\T$ and $S\in\Sides_{T}$. We bound $h_T\|\llbracket\nabla \bar{y}_\T\cdot\boldsymbol{\nu}\rrbracket\|_{L^2(S)}^2$ in \eqref{eq:error_estimator_state_eq}, i.e., the jump or interelement residual term. To accomplish this task, we set $v=\varphi_S\llbracket\nabla \bar{y}_\T\cdot\boldsymbol{\nu}\rrbracket$ in \eqref{eq:error_eq_state} and utilize standard bubble functions arguments to obtain
\begin{multline*}
\|\llbracket\nabla \bar{y}_\T\cdot\boldsymbol{\nu}\rrbracket\|_{L^2(S)}^2
\lesssim
\sum_{T'\in\mathcal{N}_S}\left(h_T^{-1}\|\nabla (\bar{y}-\bar{y}_\T)\|_{L^2(T')}+\|(a(\cdot,\bar{y})-a(\cdot,\bar{y}_\T)\|_{L^2(T')}\right.\\
\left.+\|\bar{u}-\bar{u}_\T\|_{L^2(T')}+\|\bar{u}_\T-a(\cdot,\bar{y}_\T)\|_{L^2(T')}\right)h_{T}^{\frac{1}{2}}\|\llbracket\nabla \bar{y}_\T\cdot\boldsymbol{\nu}\rrbracket\|_{L^2(S)}.
\end{multline*}
Using, again, the local Lipschitz property of $a$ with respect to $y$ we arrive at
\begin{multline*}
h_T\|\llbracket\nabla \bar{y}_\T\cdot\boldsymbol{\nu}\rrbracket\|^2_{L^2(S)}
\lesssim
\sum_{T'\in\mathcal{N}_S}\left(\|\nabla (\bar{y}-\bar{y}_\T)\|^2_{L^2(T')}\right.\\
\left.+\, h_T^2\|\bar{y}-\bar{y}_\T\|^2_{L^2(T')} 
+h_T^2\|\bar{u}-\bar{u}_\T\|^2_{L^2(T')}\right).
\end{multline*}

The collection of the estimates derived in Steps 1 and 2 concludes the proof.
\end{proof}

We now continue with the study of the local efficiency properties of the estimator $\mathcal{E}_{ad}$ defined in \eqref{eq:error_estimator_adjoint_eq}.

\begin{theorem}[local efficiency of $\mathcal{E}_{ad}$]\label{thm:local_eff_adjoint}
Suppose that assumptions \textnormal{\ref{A1}}--\textnormal{\ref{A3}} hold. Let $\bar u \in \mathbb{U}_{ad}$ be a local solution to \eqref{def:weak_ocp}--\eqref{eq:weak_st_eq}. Let $\bar u_{\T}$ be a local minimum of the discretization \eqref{eq:discrete_state_equation}--\eqref{eq:discrete_adjoint_equation} with $\bar{y}_{\T}$ and $\bar{p}_\T$ being the associated state and adjoint state, respectively. Then, for $T\in\T$, the local error indicator $\mathcal{E}_{ad,T}$ satisfies
\begin{multline}\label{eq:local_eff_ad}
\mathcal{E}_{ad,T}
\lesssim \|\nabla(\bar{p}-\bar{p}_\T)\|_{L^2(\mathcal{N}_{T})}+(1+h_T)\|\bar{y}-\bar{y}_\T\|_{L^2(\mathcal{N}_{T})} \\
+h_T\left(\|\bar{p}-\bar{p}_\T\|_{L^2(\mathcal{N}_{T})}+\|y_\Omega-\mathscr{P}_{\T}y_\Omega\|_{L^2(\mathcal{N}_{T})}\right),
\end{multline}
where $\mathcal{N}_{T}$ is defined as in \eqref{eq:patch}. The hidden constant is independent of the continuous and discrete optimal variables, the size of the elements in the mesh $\T$, and $\#\T$.
\end{theorem}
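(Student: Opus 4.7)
The plan is to mirror the proof of Theorem~\ref{thm:local_eff_state}, starting now from the adjoint error representation~\eqref{eq:error_eq_adjoint} and using standard element- and edge-bubble residual arguments. The preparatory observation is that the element residual naturally appearing in~\eqref{eq:error_eq_adjoint} is $R_T := \bar{y}_\T - \mathscr{P}_\T y_\Omega - \tfrac{\partial a}{\partial y}(\cdot,\bar{y}_\T)\bar{p}_\T$, which differs from the one in~\eqref{eq:indicators_adjoint_eq} by the data oscillation $\mathscr{P}_\T y_\Omega - y_\Omega$; I will first estimate $R_T$ and then recover $\mathcal{E}_{ad,T}$ via the triangle inequality
$h_T\|\bar{y}_\T - y_\Omega - \tfrac{\partial a}{\partial y}(\cdot,\bar{y}_\T)\bar{p}_\T\|_{L^2(T)} \leq h_T\|R_T\|_{L^2(T)} + h_T\|\mathscr{P}_\T y_\Omega - y_\Omega\|_{L^2(T)}$.

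For the element part, I would plug $w=\varphi_T R_T$ into~\eqref{eq:error_eq_adjoint}; the jump contribution vanishes since $\operatorname{supp} w \subset T$. I would bound each remaining right-hand side term by Cauchy--Schwarz/H\"older, using assumption~\ref{A3} to control $\partial_y a(\cdot,\bar{y})$ in $L^\infty(T)$, the (uniform) $L^\infty$-bound on $\bar{p}_\T$, and the local Lipschitz property~\eqref{eq:Lipschitz_cont} on the nonlinear contribution
$([\partial_y a(\cdot,\bar{y}_\T)-\partial_y a(\cdot,\bar{y})]\bar{p}_\T, w)_{L^2(T)} \lesssim L_M\|\bar{y}-\bar{y}_\T\|_{L^2(T)}\|\bar{p}_\T\|_{L^\infty(T)}\|w\|_{L^2(T)}$.
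Combined with the standard bubble inequalities $\|w\|_{L^2(T)}\lesssim\|R_T\|_{L^2(T)}$ and $\|\nabla w\|_{L^2(T)}\lesssim h_T^{-1}\|R_T\|_{L^2(T)}$, dividing by $\|R_T\|_{L^2(T)}$ and multiplying by $h_T$ should give
\begin{equation*}
h_T\|R_T\|_{L^2(T)} \lesssim \|\nabla(\bar p-\bar p_\T)\|_{L^2(T)} + h_T\bigl(\|\bar p-\bar p_\T\|_{L^2(T)} + \|\bar y-\bar y_\T\|_{L^2(T)} + \|y_\Omega-\mathscr{P}_\T y_\Omega\|_{L^2(T)}\bigr).
\end{equation*}

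For the jump part, I would take $w=\varphi_S\llbracket\nabla\bar p_\T\cdot\boldsymbol{\nu}\rrbracket$ for $S\in\Sides_T$ (supported on $\mathcal{N}_S$) as test function in~\eqref{eq:error_eq_adjoint}; only the jump on $S$ survives in the edge sum. Using the edge-bubble inequalities $\|w\|_{L^2(T')}\lesssim h_S^{1/2}\|\llbracket\nabla\bar p_\T\cdot\boldsymbol{\nu}\rrbracket\|_{L^2(S)}$ and $\|\nabla w\|_{L^2(T')}\lesssim h_S^{-1/2}\|\llbracket\nabla\bar p_\T\cdot\boldsymbol{\nu}\rrbracket\|_{L^2(S)}$ on each $T'\in\mathcal{N}_S$, together with the bound on $\|R_{T'}\|_{L^2(T')}$ from the previous step to absorb the element-residual contribution that reappears through~\eqref{eq:error_eq_adjoint}, yields a matching bound for $h_T^{1/2}\|\llbracket\nabla\bar p_\T\cdot\boldsymbol{\nu}\rrbracket\|_{L^2(S)}$. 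Squaring, adding the two contributions over $T$ and $S\in\Sides_T$, taking square roots, and collecting the $\|\bar y - \bar y_\T\|_{L^2(\mathcal{N}_T)}$ terms into the prefactor $(1+h_T)$ produces~\eqref{eq:local_eff_ad}.

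The principal obstacle is the nonlinear contribution $([\partial_y a(\cdot,\bar{y}_\T)-\partial_y a(\cdot,\bar{y})]\bar{p}_\T, w)_{L^2}$: bounding it forces one to invoke~\eqref{eq:Lipschitz_cont} together with uniform $L^\infty$-bounds on $\bar y$, $\bar y_\T$, $\bar p$, and $\bar p_\T$. These follow from Theorem~\ref{thm:well_posedness_semilinear} applied to the state equation (since $\bar u,\bar u_\T\in\mathbb{U}_{ad}\subset L^\infty(\Omega)$) and to the adjoint equation (whose right-hand side $\bar y - y_\Omega$ lies in $L^q(\Omega)$ for some $q>d/2$), and their uniformity in $\T$ is what keeps the constants $C_M$ and $L_M$ from~\eqref{eq:Lipschitz_cont} independent of the mesh.
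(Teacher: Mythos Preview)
Your strategy matches the paper's almost exactly: split off the data oscillation by the triangle inequality, test \eqref{eq:error_eq_adjoint} with the element bubble $\varphi_T R_T$ for the residual term, test with the edge bubble $\varphi_S\llbracket\nabla\bar p_\T\cdot\boldsymbol{\nu}\rrbracket$ for the jump term, and combine. The one substantive difference is how you bound the nonlinear contribution $\bigl([\partial_y a(\cdot,\bar{y}_\T)-\partial_y a(\cdot,\bar{y})]\bar{p}_\T, w\bigr)_{L^2(T)}$. You pull $\bar p_\T$ out in $L^\infty(T)$ and use the pointwise Lipschitz bound, which (if valid) gives only an $h_T$ prefactor on $\|\bar y-\bar y_\T\|_{L^2(T)}$. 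The paper instead applies a generalized H\"older inequality keeping $\|\bar p_\T\|_{H^1(T)}$ and then invokes the global stability estimate $\|\bar p_\T\|_{H^1(\Omega)} \lesssim \|y_\Omega\|_{L^2(\Omega)} + \rho|\Omega|^{1/2}$. The paper's route picks up an extra $h_T^{-1}$ from the inverse inequality on $\|\varphi_T \mathfrak{R}_T^{ad}\|_{H^1(T)}$, which after multiplying by $h_T$ produces the ``$1$'' in $(1+h_T)$; this is where that factor in \eqref{eq:local_eff_ad} actually originates.

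Your $L^\infty$ route is cleaner, but the justification you give has a gap: Theorem~\ref{thm:well_posedness_semilinear} is a statement about \emph{continuous} solutions, so it does not directly yield a uniform-in-$\T$ bound on $\|\bar p_\T\|_{L^\infty(\Omega)}$ (nor on $\|\bar y_\T\|_{L^\infty(\Omega)}$). Obtaining such discrete $L^\infty$ stability would require a discrete Stampacchia-type argument or an $L^\infty$ error estimate, neither of which is provided here and both of which are nontrivial on general Lipschitz polytopes in $d=3$. The paper's choice sidesteps this issue entirely because discrete $H^1$ stability of the adjoint follows from a one-line energy estimate. The safe fix is simply to adopt the paper's H\"older splitting for that single term; everything else in your outline is correct and parallels the paper's proof.
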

\begin{proof}
We estimate each term in the definition of the local error indicator $\mathcal{E}_{ad,T}$, given in \eqref{eq:indicators_adjoint_eq}, separately.

\emph{Step 1}. Let $T\in\T$. A simple application of the triangle inequality yields
\begin{multline*}
h_T\|\bar{y}_\T-y_\Omega-\tfrac{\partial a}{\partial y}(\cdot,\bar{y}_\T)\bar{p}_\T\|_{L^2(T)}
\\
\leq  h_T\|\bar{y}_\T-\mathscr{P}_{\T}y_\Omega-\tfrac{\partial a}{\partial y}(\cdot,\bar{y}_\T)\bar{p}_\T\|_{L^2(T)}+h_T\|\mathscr{P}_{\T}y_\Omega-y_\Omega\|_{L^2(T)}.
\end{multline*}
To estimate the first term on the right hand side of the previous estimate and also to simplify the presentation of the material, we define 
\[
\mathfrak{R}_{T}^{ad}:=\bar{y}_\T-\mathscr{P}_{\T} y_\Omega -\tfrac{\partial a}{\partial y}(\cdot,\bar{y}_\T)\bar{p}_\T.
\]
Now, set $w=\varphi_T \mathfrak{R}_T^{ad}$ in \eqref{eq:error_eq_adjoint} and invoke basic inequalities to arrive at
\begin{multline}\label{eq:residual_estimate_ad_0}
\| \varphi_T^{1/2} \mathfrak{R}_{T}^{ad}\|_{L^2(T)}^2 
\lesssim
\|\nabla(\bar{p}-\bar{p}_\T)\|_{L^2(T)}\|\nabla (\varphi_T \mathfrak{R}_T^{ad})\|_{L^2(T)}
\\
+ \|\varphi_T \mathfrak{R}_T^{ad} \|_{L^2(T)}
\left(\|\bar{y}-\bar{y}_\T\|_{L^2(T)}+\|\tfrac{\partial a}{\partial y}(\cdot,\bar{y})(\bar{p}-\bar{p}_\T)\|_{L^2(T)}+\|\mathscr{P}_{\T}y_\Omega-y_\Omega\|_{L^2(T)}\right)
\\
+\|\tfrac{\partial a}{\partial y}(\cdot,\bar{y})-\tfrac{\partial a}{\partial y}(\cdot,\bar{y}_\T)\|_{L^2(T)}\| \bar{p}_\T\|_{H^1(T)}\|\varphi_T \mathfrak{R}_T^{ad}\|_{H^1(T)}.
\end{multline}
Since $\mathfrak{R}_T^{ad}\varphi_T\in H_0^1(T)$, we have $\|\mathfrak{R}_T^{ad}\varphi_T\|_{H^1(T)} \lesssim \|\nabla(\mathfrak{R}_T^{ad}\varphi_T)\|_{L^2(T)}$. On the basis of \eqref{eq:residual_estimate_ad_0}, standard inverse inequalities and bubble functions arguments yield
\begin{multline}\label{eq:residual_estimate_ad_1}
\|\mathfrak{R}_{T}^{ad}\|_{L^2(T)}
\lesssim
h_T^{-1}\|\nabla(\bar{p}-\bar{p}_\T)\|_{L^2(T)}+\|\tfrac{\partial a}{\partial y}(\cdot,\bar{y})(\bar{p}-\bar{p}_\T)\|_{L^2(T)}\\
+ h_T^{-1}\|\tfrac{\partial a}{\partial y}(\cdot,\bar{y})-\tfrac{\partial a}{\partial y}(\cdot,\bar{y}_\T)\|_{L^2(T)}\| \bar{p}_\T\|_{H^1(T)}+\|\bar{y}-\bar{y}_\T\|_{L^2(T)}+\|\mathscr{P}_{\T}y_\Omega-y_\Omega\|_{L^2(T)}.
\end{multline}
Stability estimates for the problems that $\bar{p}_\T$ and $\bar{y}_\T$ solve yield the estimate
\begin{equation}\label{eq:stability_discrete_ad}
\|\bar{p}_\T\|_{H^1(T)}
\leq
\|\bar{p}_\T\|_{H^1(\Omega)}
\lesssim
\|y^{}_\Omega\|_{L^2(\Omega)} + \|y^{}_{\T} \|_{L^2(\Omega)}
\lesssim 
\|y^{}_\Omega\|_{L^2(\Omega)} + \rho|\Omega|^{\frac{1}{2}},
\end{equation}
where $\rho=\max\{|\texttt{a}|,|\texttt{b}|\}$. Replacing this estimate into \eqref{eq:residual_estimate_ad_1}, invoking the local Lipschitz property of $a$ with respect to the variable $y$ \eqref{eq:Lipschitz_cont} and assumption \ref{A3}, we conclude
\begin{multline}\label{eq:residual_estimate_ad_2}
h_T\|\mathfrak{R}_{T}^{ad}\|_{L^2(\Omega)}
\lesssim
\|\nabla(\bar{p}-\bar{p}_\T)\|_{L^2(T)}+h_T\|\bar{p}-\bar{p}_\T\|_{L^2(T)}\\
+(1+h_T)\|\bar{y}-\bar{y}_\T\|_{L^2(T)}+h_T\|\mathscr{P}_{\T}y_\Omega-y_\Omega\|_{L^2(T)}.
\end{multline}
Notice that the hidden constant is independent of the continuous and discrete optimal variables, the size of the elements in the mesh $\T$, and $\#\T$ but depends on the continuous problem data.

\emph{Step 2}. Let $T\in\T$ and $S\in\Sides_{T}$. Now we bound the jump term $\|\llbracket \nabla\bar{p}_\T\cdot\boldsymbol{\nu}\rrbracket\|_{L^2(S)}$ in \eqref{eq:indicators_adjoint_eq}.  To accomplish this task, we set $w=\llbracket\nabla \bar{p}_\T\cdot\boldsymbol{\nu}\rrbracket\varphi_S$ in \eqref{eq:error_eq_adjoint} and proceed with similar arguments as the ones used in \eqref{eq:residual_estimate_ad_0}--\eqref{eq:residual_estimate_ad_1}. We thus obtain
\begin{multline*}
\|\llbracket\nabla \bar{p}_\T\cdot\boldsymbol{\nu}\rrbracket\|_{L^2(S)}^2\lesssim\! \sum_{T'\in\mathcal{N}_{S}}\bigg(\!h_T^{-1}\|\nabla(\bar{p}-\bar{p}_\T)\|_{L^2(T')}+\|\bar{p}-\bar{p}_\T\|_{L^2(T')}
\\
+ \|\bar{y}-\bar{y}_\T\|_{L^2(T')}+\|\mathfrak{R}_T^{ad}\|_{L^2(T')}+\|\mathscr{P}_{\T}y_\Omega-y_\Omega\|_{L^2(T')}
\\
+h_T^{-1}\|\bar{p}_\T\|_{H^1(T)}\|\tfrac{\partial a}{\partial y}(\cdot,\bar{y})-\tfrac{\partial a}{\partial y}(\cdot,\bar{y}_\T)\|_{L^2(T')}\bigg)h_{T}^{\frac{1}{2}}\|\llbracket\nabla \bar{p}_\T\cdot\boldsymbol{\nu}\rrbracket\|_{L^2(S)}.
\end{multline*}
Finally, utilize the stability estimate \eqref{eq:stability_discrete_ad}, the local Lipschitz continuity of $\tfrac{\partial a}{\partial y}(\cdot,y)$ with respect to $y$ \eqref{eq:Lipschitz_cont}, and estimate \eqref{eq:residual_estimate_ad_2}, to conclude 
\begin{multline*}
h_{T}^{\frac{1}{2}}\|\llbracket\nabla \bar{p}_\T\cdot\boldsymbol{\nu}\rrbracket\|_{L^2(S)}\lesssim \sum_{T'\in\mathcal{N}_{S}}\left(\|\nabla(\bar{p}-\bar{p}_\T)\|_{L^2(T')}+h_{T}\|\bar{p}-\bar{p}_\T\|_{L^2(T')}\right.\\
\left.+(1+h_{T})\|\bar{y}-\bar{y}_\T\|_{L^2(T')}+h_{T}\|\mathscr{P}_{\T}y_\Omega-y_\Omega\|_{L^2(T')}\right).
\end{multline*}
Combine the estimates derived in Steps 1 and 2 to arrive at the desired estimate \eqref{eq:local_eff_ad}.
\end{proof}

The results of Theorems \ref{thm:local_eff_state} and \ref{thm:local_eff_adjoint} immediately yield the global efficiency of $\mathcal{E}_{ocp}$. To derive such a result, we define, for $w \in L^2(\Omega)$,
\[
\mathrm{osc}(w,\T):= \left(\sum_{T\in\T}h_T^2\|w-\mathscr{P}_{\T}w\|_{L^2(T)}^2\right)^\frac{1}{2}.
\]

\begin{theorem}[global efficiency of $\mathcal{E}_{ocp}$]\label{thm:global_eff}
Suppose that assumptions \textnormal{\ref{A1}}--\textnormal{\ref{A3}} hold. Let $\bar u \in \mathbb{U}_{ad}$ be a local solution to \eqref{def:weak_ocp}--\eqref{eq:weak_st_eq}. Let $\bar u_{\T}$ be a local minimum of the discretization \eqref{eq:discrete_state_equation}--\eqref{eq:discrete_adjoint_equation} with $\bar{y}_{\T}$ and $\bar{p}_\T$ being the associated state and adjoint state, respectively.  Then, the error estimator $\mathcal{E}_{ocp}$, defined in \eqref{def:error_estimator_ocp}, satisfies
\begin{equation*}
\mathcal{E}_{ocp}
\lesssim \|\bar{p}-\bar{p}_\T\|_{H^1(\Omega)}+\|\bar{y}-\bar{y}_\T\|_{H^1(\Omega)} 
+\|\bar{u}-\bar{u}_\T\|_{L^2(\Omega)}
+\mathrm{osc}(y_{\Omega},\T).
\end{equation*}
The hidden constant is independent of the continuous and discrete optimal variables, the size of the elements in the mesh $\T$, and $\#\T$.
\end{theorem}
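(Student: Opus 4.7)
The plan is to assemble the global efficiency bound by combining the two local efficiency results (Theorems \ref{thm:local_eff_state} and \ref{thm:local_eff_adjoint}) with a direct estimate for the control indicator $\mathcal{E}_{ct}$, and then summing over $T\in\T$ using the finite overlap of the patches $\mathcal{N}_T$.

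First, I would handle the control contribution. By definition $\mathcal{E}_{ct}=\|\tilde{u}-\bar{u}_\T\|_{L^2(\Omega)}$, where $\tilde{u}=\Pi_{[\texttt{a},\texttt{b}]}(-\nu^{-1}\bar{p}_\T)$ and, in view of the projection formula \eqref{eq:projection_control}, $\bar{u}=\Pi_{[\texttt{a},\texttt{b}]}(-\nu^{-1}\bar{p})$. A triangle inequality followed by the (globally) Lipschitz property of $\Pi_{[\texttt{a},\texttt{b}]}$ then yields
\[
\mathcal{E}_{ct}
\leq
\|\tilde{u}-\bar{u}\|_{L^2(\Omega)}+\|\bar{u}-\bar{u}_\T\|_{L^2(\Omega)}
\lesssim
\|\bar{p}-\bar{p}_\T\|_{L^2(\Omega)}+\|\bar{u}-\bar{u}_\T\|_{L^2(\Omega)},
\]
with the hidden constant proportional to $\nu^{-1}$.

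Next I would square the local estimates \eqref{eq:local_eff_st} and \eqref{eq:local_eff_ad} and sum over $T\in\T$. Using $h_T\lesssim \mathrm{diam}(\Omega)$ to absorb the $h_T$ factors on the lower-order $L^2$ terms into constants, and invoking the finite overlapping property of the stars $\mathcal{N}_T$, I obtain
\[
\mathcal{E}_{st}^2\lesssim \|\nabla(\bar{y}-\bar{y}_\T)\|_{L^2(\Omega)}^2+\|\bar{y}-\bar{y}_\T\|_{L^2(\Omega)}^2+\|\bar{u}-\bar{u}_\T\|_{L^2(\Omega)}^2,
\]
and
\[
\mathcal{E}_{ad}^2\lesssim \|\nabla(\bar{p}-\bar{p}_\T)\|_{L^2(\Omega)}^2+\|\bar{y}-\bar{y}_\T\|_{L^2(\Omega)}^2+\|\bar{p}-\bar{p}_\T\|_{L^2(\Omega)}^2+\mathrm{osc}(y_\Omega,\T)^2.
\]

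Finally, I would insert these three bounds into $\mathcal{E}_{ocp}^2=\mathcal{E}_{st}^2+\mathcal{E}_{ad}^2+\mathcal{E}_{ct}^2$, recognize the sums $\|\nabla v\|_{L^2(\Omega)}^2+\|v\|_{L^2(\Omega)}^2=\|v\|_{H^1(\Omega)}^2$ for $v=\bar{y}-\bar{y}_\T$ and $v=\bar{p}-\bar{p}_\T$, and take square roots to conclude. There is no substantive obstacle here: the work was entirely done in Theorems \ref{thm:local_eff_state} and \ref{thm:local_eff_adjoint}; the only mild point to track is the control contribution, which is handled once the Lipschitz character of the projector and the formula $\bar{u}=\Pi_{[\texttt{a},\texttt{b}]}(-\nu^{-1}\bar{p})$ are used.
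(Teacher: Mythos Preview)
Your proposal is correct and follows essentially the same route as the paper: sum the local efficiency bounds \eqref{eq:local_eff_st} and \eqref{eq:local_eff_ad} over $T\in\T$ using finite overlap of the patches $\mathcal{N}_T$ and $h_T\lesssim\mathrm{diam}(\Omega)$, and bound $\mathcal{E}_{ct}$ via the triangle inequality, the projection formula \eqref{eq:projection_control}, and the Lipschitz property of $\Pi_{[\texttt{a},\texttt{b}]}$. The only cosmetic difference is that the paper additionally applies a Poincar\'e inequality to write $\|\bar{p}-\bar{p}_\T\|_{L^2(\Omega)}\lesssim\|\nabla(\bar{p}-\bar{p}_\T)\|_{L^2(\Omega)}$ in the control estimate, which is immaterial once everything is absorbed into the $H^1$-norms.
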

\begin{proof}
We begin by invoking the definition of the global indicator $\mathcal{E}_{st}$, given by \eqref{eq:error_estimator_state_eq}, and the local efficiency estimate \eqref{eq:local_eff_st} to arrive at
\begin{equation}\label{eq:global_estimate_st}
\mathcal{E}_{st}
\lesssim 
\|\nabla(\bar{y}-\bar{y}_\T)\|_{L^2(\Omega)}+\text{diam}(\Omega)\|\bar{y}-\bar{y}_\T\|_{L^2(\Omega)}+\text{diam}(\Omega)\|\bar{u}-\bar{u}_\T\|_{L^2(\Omega)}.
\end{equation}
On the other hand, in view of \eqref{eq:error_estimator_adjoint_eq}, the efficiency estimate \eqref{eq:local_eff_ad} provides the bound
\begin{multline}\label{eq:global_estimate_ad}
\mathcal{E}_{ad}
\lesssim \|\nabla(\bar{p}-\bar{p}_\T)\|_{L^2(\Omega)}+(1+\text{diam}(\Omega))\|\bar{y}-\bar{y}_\T\|_{L^2(\Omega)} \\
+\text{diam}(\Omega)\|\bar{p}-\bar{p}_\T\|_{L^2(\Omega)}+\mathrm{osc}(y_{\Omega},\T).
\end{multline}
It thus suffices to control $\mathcal{E}_{ct} $. In view of \eqref{eq:error_estimator_control_var}, a trivial application of the triangle inequality yields
\begin{align*}
\mathcal{E}_{ct} 
&\leq \|\tilde{u}-\bar{u}\|_{L^2(\Omega)}+\|\bar{u}-\bar{u}_\T\|_{L^2(\Omega)}\\
&= \|\Pi_{[\texttt{a},\texttt{b}]}(-\nu^{-1}\bar{p}_\T)-\Pi_{[\texttt{a},\texttt{b}]}(-\nu^{-1}\bar{p})\|_{L^2(\Omega)}+\|\bar{u}-\bar{u}_\T\|_{L^2(\Omega)},
\end{align*}
where $\Pi_{[\texttt{a},\texttt{b}]}$ is defined as in \eqref{def:projector_pi}. This estimate, in conjunction with the Lipschitz property of $\Pi_{[\texttt{a},\texttt{b}]}$ and a Poincar\'e inequality, implies
\begin{equation}\label{eq:global_estimate_ct}
\mathcal{E}_{ct}
\lesssim
\nu^{-1}\|\nabla (\bar{p}_\T-\bar{p})\|_{L^2(\Omega)}+\|\bar{u}-\bar{u}_\T\|_{L^2(\Omega)}.
\end{equation}
The proof concludes by gathering the estimates \eqref{eq:global_estimate_st}, \eqref{eq:global_estimate_ad}, and \eqref{eq:global_estimate_ct}.
\end{proof}


\section{Extensions}\label{sec:extensions}
We present a few extensions of the theory developed in the previous sections.

\subsection{Piecewise linear approximation}
In this section, we consider a similar finite element discretization as the one introduced in section \ref{sec:fem_ocp} with the difference that to approximate  the optimal control variable $\bar u$ we employ  piecewise linear functions i.e., $\bar u_{\T} \in \mathbb{U}_{ad,1}(\mathscr{T})$, where
\begin{equation*}
\mathbb{U}_{ad,1}(\mathscr{T}):=\mathbb{U}_1(\mathscr{T})\cap \mathbb{U}_{ad},
\quad
\mathbb{U}_1(\mathscr{T}):=\{ u_\T\in C(\bar \Omega): u_\T|^{}_T\in \mathbb{P}_1(T) \ \forall \ T\in \T\}.
\end{equation*}

The following discrete optimal control problem can thus be proposed: 
Find $\min J(y_{\T},u_{\T})$ subject to the discrete state equation
\begin{equation}
\label{eq:discrete_state_equation_linear}
y^{}_\mathscr{T} \in \mathbb{V}(\mathscr{T}): \quad
(\nabla y^{}_\mathscr{T},\nabla v^{}_\mathscr{T})_{L^2(\Omega)}+(a(\cdot,y_\T),v_\T)_{L^2(\Omega)}  =  (u^{}_\mathscr{T},v^{}_\mathscr{T})^{}_{L^2(\Omega)}
\end{equation}
for all $v^{}_\mathscr{T} \in \mathbb{V}(\mathscr{T})$ and the discrete control constraints $u^{}_{\mathscr{T}} \in \mathbb{U}_{ad,1}(\T)$. The well--posedness of this solution technique as well as first order optimality conditions follow from \cite[Theorem 3.3]{MR2350349}. For a priori error estimates, we refer the reader to \cite[Theorem 4.1]{MR2350349} and \cite[section 10]{MR3586845}.

We propose an a posteriori error estimator that accounts for the discretization of the state, adjoint state, and control variables when the error, in each one of these variables, is measured in the $L^2(\Omega)$-norm.  As it is customary when performing an a posteriori error analysis based on duality, we assume that $\Omega$ is convex.

Assume that we have at hand, a posteriori error estimators ${E}_{st}$ and ${E}_{ad}$ such that
\begin{equation}\label{eq:estimate_state_hat_discrete_duality}
\|\hat{y}-\bar{y}_\T\|_{L^2(\Omega)}
\lesssim {E}_{st},
\quad
\|\hat{p}-\bar{p}_\T\|_{L^2(\Omega)}
\lesssim {E}_{ad}.
\end{equation}

Define, for $(v,w,z) \in L^2(\Omega)\times L^2(\Omega)\times L^2(\Omega)$, the norm 
\[
\| (v,w,z)\|_{\Omega} := \| v \|_{L^{2}(\Omega)} + \| w \|_{L^{2}(\Omega)} + \| z \|_{L^{2}(\Omega)}. 
\]

We present the following global reliability result.

\begin{theorem}[global reliability]\label{thm:control_bound_duality}
Suppose that assumptions \textnormal{\ref{A1}}--\textnormal{\ref{A3}} hold. Let $\bar u \in \mathbb{U}_{ad}$ be a local solution to \eqref{def:weak_ocp}--\eqref{eq:weak_st_eq} satisfying the sufficient second order condition \eqref{eq:second_order_2_2}, or equivalently \eqref{eq:second_order_equivalent}. Let $\bar u_{\T}$ be a local minimum of the associated discrete optimal control problem with $\bar{y}_{\T}$ and $\bar{p}_\T$ being the corresponding state and adjoint state, respectively. Let $\T$ be a mesh such that \eqref{eq:assumption_mesh} holds, then
\begin{equation}\label{eq:global_rel_duality}
\| (\bar{y} - \bar{y}_{\T},\bar{p} - \bar{p}_{\T},\bar{u} - \bar{u}_{\T})\|_{\Omega}
\lesssim 
{E}_{st} + {E}_{ad} + \mathcal{E}_{ct}.
\end{equation}
The hidden constant is independent of the continuous and discrete optimal variables, the size of the elements in the mesh $\T$, and $\#\T$.
\end{theorem}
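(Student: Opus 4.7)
The plan is to adapt the four-step scheme from the proof of Theorem \ref{thm:control_bound}, replacing $H^1$-based estimates for the state and adjoint by $L^2$-based ones and invoking the two auxiliary a posteriori bounds in \eqref{eq:estimate_state_hat_discrete_duality} in place of \eqref{eq:estimate_state_hat_discrete} and \eqref{eq:adjoint_hat_estimate}. The piecewise linear nature of the approximating space $\mathbb{U}_{ad,1}(\T)$ does not intervene in the argument beyond defining $\bar u_\T$: the control indicator $\mathcal{E}_{ct} = \|\tilde u - \bar u_\T\|_{L^2(\Omega)}$ has the same form, and $\tilde u$ still satisfies the variational inequality \eqref{eq:var_ineq_u_tilde}.

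\textbf{Step 1 (control error).} I would first estimate $\|\bar u - \bar u_\T\|_{L^2(\Omega)}$. Using the triangle inequality one reduces to bounding $\|\bar u - \tilde u\|_{L^2(\Omega)}$. Invoking the second order condition via Theorem \ref{thm:error_bound_control_tilde} together with the variational inequalities \eqref{eq:var_ineq} and \eqref{eq:var_ineq_u_tilde} (exactly as in Step 1 of the proof of Theorem \ref{thm:control_bound}), one obtains
\begin{equation*}
\tfrac{\mu}{2}\|\bar u - \tilde u\|_{L^2(\Omega)}^2 \leq (\tilde p - \bar p_\T, \tilde u - \bar u)_{L^2(\Omega)}.
\end{equation*}
Cauchy--Schwarz and the insertion of $\hat p$ give $\|\bar u - \tilde u\|_{L^2(\Omega)} \lesssim \|\tilde p - \hat p\|_{L^2(\Omega)} + E_{ad}$. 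The term $\|\tilde p - \hat p\|_{L^2(\Omega)}$ is then treated via a Poincar\'e inequality followed by the already-established bound \eqref{eq:estimate_adjoint_tilde_hat}, which yields $\|\tilde p - \hat p\|_{L^2(\Omega)} \lesssim \|\tilde y - \bar y_\T\|_{L^2(\Omega)}$. A further triangle inequality with $\hat y$ combined with \eqref{eq:estimate_state_hat_discrete_duality} and with the bound $\|\tilde y - \hat y\|_{L^2(\Omega)} \lesssim \mathcal{E}_{ct}$ (obtained by testing \eqref{eq:tildey-haty} with $\tilde y - \hat y$, using monotonicity, and applying Poincar\'e) closes this step.

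\textbf{Steps 2 and 3 (state and adjoint errors in $L^2$).} For $\|\bar y - \bar y_\T\|_{L^2(\Omega)}$ I would insert $\hat y$, use \eqref{eq:estimate_state_hat_discrete_duality} to control $\|\hat y - \bar y_\T\|_{L^2(\Omega)} \lesssim E_{st}$, and bound $\|\bar y - \hat y\|_{L^2(\Omega)}$ by testing the equation satisfied by $\bar y - \hat y$ with itself, using assumption \ref{A2} and Poincar\'e, to get $\|\bar y - \hat y\|_{L^2(\Omega)} \lesssim \|\bar u - \bar u_\T\|_{L^2(\Omega)}$. Step 1 then gives $\|\bar y - \bar y_\T\|_{L^2(\Omega)} \lesssim E_{st} + E_{ad} + \mathcal{E}_{ct}$. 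For $\|\bar p - \bar p_\T\|_{L^2(\Omega)}$ the same pattern works: insert $\hat p$, use $\|\hat p - \bar p_\T\|_{L^2(\Omega)} \lesssim E_{ad}$, and estimate $\|\bar p - \hat p\|_{L^2(\Omega)}$ by testing the equation for $\bar p - \hat p$ (mimicking the derivation of \eqref{eq:error_adjoint_eq_2}, invoking the local Lipschitz property of $\partial a/\partial y$ and the embedding $H^1(\Omega) \hookrightarrow L^4(\Omega)$) to arrive at $\|\bar p - \hat p\|_{L^2(\Omega)} \lesssim \|\bar y - \bar y_\T\|_{L^2(\Omega)}$. Adding the three estimates yields \eqref{eq:global_rel_duality}.

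\textbf{Main obstacle.} The only delicate point is the $L^2$ control of $\|\tilde p - \hat p\|_{L^2(\Omega)}$ and $\|\bar p - \hat p\|_{L^2(\Omega)}$: since the natural energy identity only provides a bound on $\|\nabla(\cdot)\|_{L^2(\Omega)}$, we are forced to use Poincar\'e to pass to $L^2$, and this is where the estimates \eqref{eq:estimate_adjoint_tilde_hat} and \eqref{eq:error_adjoint_eq_2} (which controlled the $H^1$-seminorm by $\|\bar y - \bar y_\T\|_{L^2(\Omega)}$) become essential. This also explains the need, mentioned in the statement, to assume that $\Omega$ is convex: the hypotheses \eqref{eq:estimate_state_hat_discrete_duality} underlying $E_{st}$ and $E_{ad}$ rely on an Aubin--Nitsche duality argument which requires $H^2$-regularity of the dual problem.
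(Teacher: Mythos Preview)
Your proposal is correct and follows essentially the same route as the paper's own proof: both adapt the four-step argument of Theorem~\ref{thm:control_bound} by replacing the $H^1$-seminorm bounds \eqref{eq:estimate_state_hat_discrete} and \eqref{eq:adjoint_hat_estimate} with the assumed $L^2$-bounds \eqref{eq:estimate_state_hat_discrete_duality}, and by passing from $\|\nabla(\cdot)\|_{L^2(\Omega)}$ to $\|\cdot\|_{L^2(\Omega)}$ via Poincar\'e in the intermediate estimates \eqref{eq:estimate_adjoint_tilde_hat} and \eqref{eq:error_adjoint_eq_2}. The paper in fact only spells out Step~1 and then defers Steps~2--3 to the reader with ``similar arguments\ldots For brevity, we skip details,'' so your write-up is, if anything, more complete.
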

\begin{proof}
The proof of the estimate \eqref{eq:global_rel_duality} follows closely the arguments developed in the proof of Theorem \ref{thm:control_bound}. In fact, with the estimate \eqref{eq:auxiliar_estimate} at hand, we arrive at
\begin{equation}
\label{eq:auxiliar_estimate_duality}
\|\bar{u}-\tilde{u}\|_{L^2(\Omega)}
\lesssim
\|\tilde{p}-\hat{p}\|_{L^2(\Omega)}+\|\hat{p}-\bar{p}_\T\|_{L^2(\Omega)} \lesssim \|\tilde{p}-\hat{p}\|_{L^2(\Omega)} + {E}_{ad},
\end{equation}
where we have used \eqref{eq:estimate_state_hat_discrete_duality}. We now use of a Poincar\'e inequality in conjunction with the first estimate in \eqref{eq:estimate_adjoint_tilde_hat} to obtain
\begin{equation}\label{eq:aux_duality_1}
\|\tilde{p}-\hat{p}\|_{L^2(\Omega)}
\lesssim
\|\nabla(\tilde{p}-\hat{p})\|_{L^2(\Omega)}
\lesssim
\|\tilde{y}-\bar{y}_\T\|_{L^2(\Omega)}.
\end{equation}
The hidden constant is independent of the continuous and discrete optimal variables, the size of the elements in the mesh $\T$, and $\#\T$ but depends on the continuous problem data.

To control $\|\tilde{y}-\bar{y}_\T\|_{L^2(\Omega)}$ we invoke the auxiliary state $\hat y$ defined as the solution to \eqref{eq:hat_functions} and apply the triangle inequality. With these arguments we obtain
\begin{equation}
\label{eq:aux_duality_2}
\|\tilde{y}-\bar{y}_\T\|_{L^2(\Omega)} \leq  \|\tilde{y}- \hat y \|_{L^2(\Omega)} +  \| \hat y - \bar{y}_{\T}\|_{L^2(\Omega)} \lesssim  \|\tilde{y}- \hat y \|_{L^2(\Omega)} + {E}_{st},
\end{equation}
where we have also used \eqref{eq:estimate_state_hat_discrete_duality}. To bound $\|\tilde{y}- \hat y \|_{L^2(\Omega)}$ we set $v = \tilde y - \hat y$ in problem \eqref{eq:tildey-haty}. This, in view of the fact that $a$ is monotone increasing with respect to $y$, yields
\[
\|\tilde{y}- \hat y \|_{L^2(\Omega)} \lesssim \| \nabla( \tilde{y}- \hat y) \|_{L^2(\Omega)} \lesssim \| \tilde{u} -\bar{u}_{\T}  \|_{L^2(\Omega)} = \mathcal{E}_{ct}.
\]
Replacing this estimate into \eqref{eq:aux_duality_2}, and the obtained one into \eqref{eq:aux_duality_1}, we obtain the estimate $\|\tilde{p}-\hat{p}\|_{L^2(\Omega)} \lesssim E_{st} + \mathcal{E}_{ct}$. This, in view of \eqref{eq:auxiliar_estimate_duality}, reveals the a posteriori error estimate
\[
\|\bar{u}-\bar{u}_{\T}\|_{L^2(\Omega)} \lesssim {E}_{st}+{E}_{ad}+ \mathcal{E}_{ct}.
\]

The control of $\|\bar{y}-\bar{y}_{\T}\|_{L^2(\Omega)}$ and $\|\bar{p}-\tilde{p}_{\T}\|_{L^2(\Omega)}$ follow similar arguments as the ones elaborated in the proof of Theorem \ref{thm:control_bound}. For brevity, we skip details.
\end{proof}


\subsection{Sparse PDE--constrained optimization}
Define $\psi: L^1(\Omega) \rightarrow \mathbb{R}$ by $\psi(u) :=  \|u\|_{L^1(\Omega)}$. In this section, we present a posteriori error estimates for a semilinear optimal control problem that involves the nondifferentiable cost functional
\begin{equation*}
\mathfrak{J}(y, u):= J(y,u) + \vartheta \psi(u) = \frac{1}{2} \| y - y_\Omega \|^{2}_{L^{2}(\Omega)} + \frac{\nu}{2}\|u\|_{L^2(\Omega)}^2 + \vartheta \|u\|_{L^1(\Omega)}.
\end{equation*} 
Here, $\vartheta > 0$ denotes a sparsity parameter and $\nu>0$ corresponds to the so-called regularization parameter.  The linear case has been investigated in \cite{10.1093/imanum/drz025}. The cost functional $\mathfrak{J}$ involves the $L^1(\Omega)$-norm of the control variable, which is a natural measure of the control cost, and leads to sparsely supported optimal controls \cite{MR3023751,MR2826983}.

We consider the following sparse PDE--constrained optimization problem: Find $\min \{\mathfrak{J}(y, u): (y,u) \in H_0^1(\Omega) \times \mathbb{U}_{ad}\}$ subject to \eqref{eq:weak_st_eq}. This problem admits at least  one optimal solution $(\bar y, \bar u) \in H_0^1(\Omega) \times \mathbb{U}_{ad}$. In addition, if $\bar u$ is a local minimum, then there exists $\bar y \in H_0^1(\Omega)$, $\bar p \in H_0^1(\Omega)$, and $\bar \lambda \in \partial \psi(\bar u)$ such that \eqref{eq:weak_st_eq} and \eqref{eq:adj_eq} hold and 
\begin{equation*}
(\bar p + \nu \bar u + \vartheta \bar \lambda, u - \bar u)_{L^2(\Omega)} \geq 0 \quad \forall\ u \in \mathbb{U}_{ad};
\end{equation*}
see \cite[Theorem 3.1]{MR3023751}. The following characterizations for the optimal control $\bar{u}$ and its associated subgradient $\bar{\lambda}$ hold \cite[Corollary 3.2]{MR3023751}:
\begin{equation*}
\bar{\lambda}(x):=\Pi_{[-1,1]} \left(-{\vartheta}^{-1}\bar{p}(x) \right),
\quad
\bar{u}(x) = \Pi_{[\texttt{a},\texttt{b}]} \left(-{\nu}^{-1} \left[ \bar{p}(x) + \vartheta \bar{\lambda}(x) \right] \right) \textrm{ a.e. } x \in \Omega.
\end{equation*}

We propose the following discrete optimal control problem: Find $\min \mathfrak{J}(y_{\T}, u_{\T})$ subject to \eqref{eq:discrete_state_equation_linear} and the discrete control constraints $u_{\T} \in \mathbb{U}_{ad}(\T)$. The existence of solutions for this scheme as well as first order optimality conditions follow from \cite[section 4]{MR3023751}.

Define the cones
\begin{align*}
\mathfrak{C}_{\bar{u}}:&=\{v \in L^2(\Omega) \text{ satisfying } \eqref{eq:cone_def} \text{ and } j'(\bar{u})v + \vartheta\psi'(\bar{u}; v) = 0\},\\
\mathfrak{C}_{\bar{u}}^{\tau}:&=\{v \in L^2(\Omega) \text{ satisfying } \eqref{eq:cone_def} \text{ and } j'(\bar{u})v + \vartheta\psi'(\bar{u}; v) \leq \tau \|v\|_{L^2(\Omega)}\}.
\end{align*}
Necessary and sufficient second order optimality conditions follow from \cite[Theorem 3.7 and 3.9]{MR3023751}: If $\bar u$ is a local minimum, then $j''(\bar u ) v^2 \geq 0$ for all $v \in \mathfrak{C}_{\bar{u}}$. Conversely, let $\bar u \in \mathbb{U}_{ad}$ and $\lambda \in \partial \psi(\bar u)$ satisfy the associated first order optimality conditions. If $j''(\bar u ) v^2 > 0$ for all $v \in \mathfrak{C}_{\bar{u}} \setminus\{0\}$, then $\bar u$ is a local minimum. In addition, we have the equivalence \cite[Theorem 3.8]{MR3023751}
\begin{equation}
\label{eq:second_order_2_sparse}
j''(\bar u ) v^2 > 0 \ \forall v \in \mathfrak{C}_{\bar{u}} \setminus\{0\}  \iff \exists \mu, \tau >0: j''(\bar u ) v^2 \geq \mu \| v \|_{L^2(\Omega)}^2 \ \forall  v \in \mathfrak{C}_{\bar{u}}^{\tau}.
\end{equation}

Define, for a.e.  $x \in \Omega$, the auxiliary variables
\begin{equation}
\tilde{\lambda}(x):=\Pi_{[-1,1]} \left(-{\vartheta}^{-1} \bar{p}_\T(x) \right),
\quad
\tilde u(x) = \Pi_{[\texttt{a},\texttt{b}]} \left(-{\nu}^{-1} \left[ \bar{p}_{\T}(x) + \vartheta \tilde \lambda(x) \right] \right).
\label{def:control_tilde_sparse}
\end{equation}

To present a posteriori error estimates, we define the error indicators 
\begin{equation*}
\mathcal{E}_{sg,T}^2:=\|\tilde{\lambda}-\bar{\lambda}_\T\|_{L^2(T)}^2,
\quad 
\mathcal{E}_{ct,T}^2:=\|\tilde{u}-\bar{u}_\T\|_{L^2(T)}^2,
\end{equation*}
and error estimators 
\begin{equation}\label{eq:error_estimators_subgradient}
\mathcal{E}_{sg}:=\left(\sum_{T\in\T}\mathcal{E}_{sg,T}^2\right)^\frac{1}{2},
\quad
\mathcal{E}_{ct}:=\left(\sum_{T\in\T}\mathcal{E}_{ct,T}^2\right)^\frac{1}{2}.
\end{equation}

\begin{theorem}[global reliability]\label{thm:control_bound_sparse}
Suppose that assumptions \textnormal{\ref{A1}}--\textnormal{\ref{A3}} hold. Let $\bar u \in \mathbb{U}_{ad}$ be a local solution to the sparse PDE--constrained optimization problem satisfying the sufficient second order condition \eqref{eq:second_order_2_sparse}. Let $\bar u_{\T}$ be a local minimum of the associated discrete optimal control problem with $\bar{y}_{\T}$, $\bar{p}_\T$, and $\bar{\lambda}_{\T}$ being the corresponding state, adjoint state, and subgradient, respectively. Let $\T$ be a mesh such that \eqref{eq:inequality_control_tilde} holds with $\tilde{u}$ as in \eqref{def:control_tilde_sparse}, then
\begin{equation*}
\VERT{(\bar{y} - \bar{y}_{\T},\bar{p} - \bar{p}_{\T},\bar{u} - \bar{u}_{\T})\VERT}_{\Omega}+\|\bar{\lambda}-\bar{\lambda}_\T\|_{L^2(\Omega)}
\lesssim 
\mathcal{E}_{st} + \mathcal{E}_{ad} + \mathcal{E}_{ct} + \mathcal{E}_{sg}.
\end{equation*}
The hidden constant is independent of the continuous and discrete optimal variables, the size of the elements in the mesh $\T$, and $\#\T$.
\end{theorem}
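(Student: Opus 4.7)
The plan is to adapt the proof of Theorem \ref{thm:control_bound} to the nondifferentiable setting, handling the subgradient via monotonicity of the subdifferential $\partial\psi$, and then appending an estimate for $\|\bar{\lambda}-\bar{\lambda}_\T\|_{L^2(\Omega)}$.

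First I would control $\|\bar u-\bar u_{\T}\|_{L^2(\Omega)}$. After the usual splitting $\|\bar u-\bar u_{\T}\|_{L^2(\Omega)}\leq \|\bar u-\tilde u\|_{L^2(\Omega)}+\mathcal{E}_{ct}$, the inequality \eqref{eq:inequality_control_tilde}, assumed by hypothesis with $\tilde u$ as in \eqref{def:control_tilde_sparse}, gives
\[
\tfrac{\mu}{2}\|\bar u-\tilde u\|_{L^2(\Omega)}^2\leq (j'(\tilde u)-j'(\bar u))(\tilde u-\bar u).
\]
Identifying $j'(\bar u)(\tilde u-\bar u)=(\bar p+\nu\bar u,\tilde u-\bar u)_{L^2(\Omega)}$ and $j'(\tilde u)(\tilde u-\bar u)=(\tilde p+\nu\tilde u,\tilde u-\bar u)_{L^2(\Omega)}$, I then use the first order optimality condition for the sparse problem $(\bar p+\nu\bar u+\vartheta\bar\lambda,\tilde u-\bar u)_{L^2(\Omega)}\geq 0$ together with the auxiliary variational inequality $(\bar p_\T+\nu\tilde u+\vartheta\tilde\lambda,\bar u-\tilde u)_{L^2(\Omega)}\geq 0$ (which follows from the projection formula \eqref{def:control_tilde_sparse}) to arrive at
\[
\tfrac{\mu}{2}\|\bar u-\tilde u\|_{L^2(\Omega)}^2\leq (\tilde p-\bar p_\T,\tilde u-\bar u)_{L^2(\Omega)}-\vartheta(\tilde\lambda-\bar\lambda,\tilde u-\bar u)_{L^2(\Omega)}.
\]
The key step is to verify, from the pointwise characterizations of $\bar\lambda$, $\tilde\lambda$, $\bar u$, $\tilde u$, that $\bar\lambda\in\partial\psi(\bar u)$ and $\tilde\lambda\in\partial\psi(\tilde u)$ a.e., so that the monotonicity of the subdifferential yields $(\tilde\lambda-\bar\lambda,\tilde u-\bar u)_{L^2(\Omega)}\geq 0$ and the sparsity contribution disappears. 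This reduces the situation to the same master inequality $\tfrac{\mu}{2}\|\bar u-\tilde u\|_{L^2(\Omega)}^2\leq(\tilde p-\bar p_\T,\tilde u-\bar u)_{L^2(\Omega)}$ appearing in the proof of Theorem \ref{thm:control_bound}.

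From this point on, Step 1 of Theorem \ref{thm:control_bound} applies verbatim: insert $\hat p$ from \eqref{eq:hat_functions_p}, use Lemma \ref{lemma:estimate_aux_variables} to bound $\|\hat p-\bar p_\T\|_{L^2(\Omega)}\lesssim\mathcal{E}_{ad}$, estimate $\|\nabla(\tilde p-\hat p)\|_{L^2(\Omega)}\lesssim\|\nabla(\tilde y-\bar y_\T)\|_{L^2(\Omega)}$ using local Lipschitz continuity of $\partial a/\partial y$, and then bound $\|\nabla(\tilde y-\bar y_\T)\|_{L^2(\Omega)}\lesssim\mathcal{E}_{st}+\mathcal{E}_{ct}$ by combining \eqref{eq:estimate_state_hat_discrete} with the equation solved by $\tilde y-\hat y$ and monotonicity \eqref{eq:monotone_operator}. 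This yields $\|\bar u-\bar u_\T\|_{L^2(\Omega)}\lesssim\mathcal{E}_{st}+\mathcal{E}_{ad}+\mathcal{E}_{ct}$. Steps 2 and 3 of Theorem \ref{thm:control_bound}, which control $\|\nabla(\bar y-\bar y_\T)\|_{L^2(\Omega)}$ and $\|\nabla(\bar p-\bar p_\T)\|_{L^2(\Omega)}$ via the auxiliary states $\hat y$, $\hat p$ and monotonicity arguments, carry over unchanged and provide the $\VERT\cdot\VERT_\Omega$--bound.

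Finally, for the subgradient, I would use the triangle inequality $\|\bar\lambda-\bar\lambda_\T\|_{L^2(\Omega)}\leq\|\bar\lambda-\tilde\lambda\|_{L^2(\Omega)}+\mathcal{E}_{sg}$, followed by the Lipschitz property of $\Pi_{[-1,1]}$ applied to $\bar\lambda=\Pi_{[-1,1]}(-\vartheta^{-1}\bar p)$ and $\tilde\lambda=\Pi_{[-1,1]}(-\vartheta^{-1}\bar p_\T)$, giving $\|\bar\lambda-\tilde\lambda\|_{L^2(\Omega)}\leq\vartheta^{-1}\|\bar p-\bar p_\T\|_{L^2(\Omega)}$. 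A Poincar\'e inequality and the already--established bound on $\|\nabla(\bar p-\bar p_\T)\|_{L^2(\Omega)}$ close the argument. The main obstacle is confirming that $\tilde\lambda\in\partial\psi(\tilde u)$ pointwise (and similarly $\bar\lambda\in\partial\psi(\bar u)$) so that the subdifferential monotonicity cancels the $\vartheta$--terms; this is a routine but essential case analysis on the projectors \eqref{def:projector_pi} and \eqref{def:control_tilde_sparse}, assuming $\texttt{a}\leq 0\leq\texttt{b}$ as is standard in sparse optimal control.
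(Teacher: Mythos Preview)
Your proposal is correct and follows the same overall route as the paper: reduce the state, adjoint, and control errors to the argument of Theorem~\ref{thm:control_bound}, then bound the subgradient error via the Lipschitz property of $\Pi_{[-1,1]}$ and the already-established adjoint estimate \eqref{eq:estimate_error_adjoint_st}. The paper's proof is much terser --- it simply asserts that ``since \eqref{eq:inequality_control_tilde} is assumed to hold and it does not involve the nondifferentiable term $\psi$, the estimate of the error associated to the state, adjoint state, and control variables is as presented in the proof of Theorem~\ref{thm:control_bound}.'' Your explicit treatment of the $\vartheta$-terms via monotonicity of $\partial\psi$ (together with the verification that $\tilde\lambda\in\partial\psi(\tilde u)$ under $\texttt{a}\leq 0\leq\texttt{b}$) supplies exactly the step the paper leaves implicit: in the sparse setting the variational inequalities used in Step~1 of Theorem~\ref{thm:control_bound} acquire additional $\vartheta\bar\lambda$ and $\vartheta\tilde\lambda$ contributions, and it is the inequality $(\tilde\lambda-\bar\lambda,\tilde u-\bar u)_{L^2(\Omega)}\geq 0$ that ensures these combine with the correct sign to recover $(\tilde p-\bar p_\T,\tilde u-\bar u)_{L^2(\Omega)}$ on the right-hand side. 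Your subgradient estimate matches the paper's argument verbatim.
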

\begin{proof}
Since \eqref{eq:inequality_control_tilde} is assumed to hold and it does not involve the nondifferentiable term $\psi$, the estimate of the error associated to the state, adjoint state, and control variables is as presented in the proof of Theorem \ref{thm:control_bound}. It thus suffices to control the error associated to the approximation of the subgradient $\bar{\lambda}$.
To accomplish this task, we invoke 
\eqref{eq:error_estimators_subgradient} and immediately conclude that
\begin{equation}\label{eq:estimate_sparse_sg}
\|\bar{\lambda}-\bar{\lambda}_\T\|_{L^2(\Omega)}\leq \|\bar{\lambda}-\tilde{\lambda}\|_{L^2(\Omega)}+\mathcal{E}_{sg}.
\end{equation}
The Lipschitz property of $\Pi_{[-1,1]}$ and a Poincar\'e inequality yield
\begin{equation*}
\|\bar{\lambda}-\tilde{\lambda}\|_{L^2(\Omega)}\leq \vartheta^{-1}\|\bar{p}-\bar{p}_\T\|_{L^2(\Omega)}\lesssim \|\nabla(\bar{p}-\bar{p}_\T)\|_{L^2(\Omega)}.
\end{equation*}
Replace this estimate into \eqref{eq:estimate_sparse_sg} and invoke \eqref{eq:estimate_error_adjoint_st} to conclude.
\end{proof}

\begin{remark}[feasibility of estimate \eqref{eq:inequality_control_tilde}]
Notice that $\tilde u$ coincides with the discrete approximation of $\bar u$ when the so--called variational discretization scheme is employed. For such an approximation scheme and within the framework of a priori error estimates, inequality \eqref{eq:inequality_control_tilde} is proven in \cite[section 5]{MR3023751} and \cite[Lemma 4.6]{MR3023751}.
\end{remark}

\section{Numerical results}\label{sec:numer_results}

In this section, we conduct a series of numerical examples that illustrate the performance of the devised a posteriori error estimator $\mathcal{E}_{ocp}$ defined in \eqref{def:error_estimator_ocp}.

All the experiments have been carried out with the help of a code that we implemented using \texttt{C++}. All matrices have been assembled exactly and global linear systems were solved using the multifrontal massively parallel sparse direct solver (MUMPS) \cite{MUMPS1,MUMPS2}.  The right hand sides and terms involving the functions $a(\cdot,y)$ and $y_{\Omega}$, the approximation errors, and the error estimators are computed by a quadrature formula which is exact for polynomials of degree nineteen $(19)$ for two dimensional domains and degree fourteen $(14)$ for three dimensional domains.

For a given partition $\mathscr{T}$, we seek $(\bar{y}^{}_\mathscr{T},\bar{p}^{}_\mathscr{T},\bar{u}^{}_\mathscr{T})\,\in \mathbb{V}(\mathscr{T}) \times \mathbb{V}(\mathscr{T}) \times \mathbb{U}_{ad}(\T)$ that solves the discrete problem \eqref{eq:discrete_state_equation}--\eqref{eq:discrete_adjoint_equation}. This optimality system is solved by using a Newton--type primal--dual active set strategy as described in \textbf{Algorithms} \ref{Algorithm2} and \ref{Algorithm3}. To be precise, \textbf{Algorithm} \ref{Algorithm2} presents a variant of the well--known primal--dual active set strategy that can be found, for instance, in \cite[section 2.12.4]{Troltzsch}. On the other hand, \textbf{Algorithm} \ref{Algorithm3} describes the also well--known Newton method \cite[section 4.4.1]{MR1817388}. To present the latter, we define $\mathcal{X}(\T) := \mathbb{V}(\T) \times \mathbb{V}(\T) \times \mathbb{U}(\T)$ and introduce, for $\Psi = (y_{\T},p_{\T},u_{\T})$ and $\Theta = (v_{\T},w_{\T},t_{\T})$ in $\mathcal{X}(\T)$, the operator $F_{\T}: \mathcal{X}(\T) \rightarrow \mathcal{X}(\T)'$ as
\begin{align*}
\label{eq:F_Tau_Newton}
\langle F_{\T}(\Psi),\Theta \rangle := \begin{pmatrix}
(\nabla y_{\T},\nabla v_\mathscr{T})_{L^2(\Omega)} + ( a(\cdot,y_{\T}) - u_{\T},v_\mathscr{T})_{L^2(\Omega)} \\
(\nabla w_\mathscr{T},\nabla p_{\T})_{L^2(\Omega)} + \left(\frac{\partial a}{\partial y}(\cdot,y_{\T}) p_{\T} - y_{\T} + y_{\Omega} ,w_\mathscr{T}\right)_{L^2(\Omega)} \\
\left( \nu^{-1}\Pi_{\T}p_{\T}(\mathbf{1} - \boldsymbol{\chi}_{\texttt{a}} - \boldsymbol{\chi}_{\texttt{b}}) + u_{\T}\mathbf{1} - \texttt{a}\boldsymbol{\chi}_{\texttt{a}} - \texttt{b}\boldsymbol{\chi}_{\texttt{b}},t_{\T}\right)_{L^2(\Omega)}
\end{pmatrix}.
\end{align*}
Here, $\Pi_{\T}$ denotes $L^{2}$--projection operator onto piecewise constant functions over $\T$ and $\langle \cdot,\cdot \rangle$ denotes the duality pairing between $\mathcal{X}(\T)'$ and $\mathcal{X}(\T)$. In addition, 
\[
\boldsymbol{\chi}_{\texttt{a}}, \, \boldsymbol{\chi}_{\texttt{b}} \in \mathbb{R}^{\#\T},
\quad
\mathbf{1} = (1, \dots, 1)^{\intercal} \in \mathbb{R}^{\#\T}.
\]

Given an initial guess $\Psi_{0} = (y_\T^{0},p_\T^{0},u_\T^{0}) \in \mathcal{X}(\T)$ and $k \in \mathbb{N}_{0}$, we consider the following Newton iteration:
\[
 \Psi_{k+1} = \Psi_{k} + \eta,
\]
where the incremental term $ \eta \! = \! (\delta y_{\T},\delta p_{\T},\delta u_{\T})\in\mathcal{X}(\T)$ solves 
\begin{equation}\label{eq:Newton_method_iter}
\langle F'_{\T}(\Psi_{k})(\eta), \Theta \rangle = -\langle F_{\T}(\Psi_{k}),\Theta \rangle \qquad \forall \Theta = (v_{\T},w_{\T},t_{\T})\in \mathcal{X}(\T).
\end{equation}
Here, $F'_{\T}(\Psi_{k})(\eta)$ denotes the G\^ateaux derivate of $F_{\T}$ in $\Psi_{k} = (y_\T^{k},p_\T^{k},u_\T^{k})$ evaluated at the direction $\eta$. 

Once the discrete solution is obtained, we use the local error indicator $\mathcal{E}_{ocp,T}$, defined as, 
\begin{align}\label{def:indicator_ocp}
\mathcal{E}_{ocp,T}^2:= \mathcal{E}_{st,T}^{2}+ \mathcal{E}_{ad,T}^{2} + \mathcal{E}_{ct,T}^{2},
\end{align}
to drive the adaptive procedure described in \textbf{Algorithm} \ref{Algorithm1}. A sequence of adaptively refined meshes is thus generated from the initial meshes shown in Figure \ref{fig:initial_meshes}. The total number of degrees of freedom is $\mathsf{Ndof}=2\dim(\mathbb{V}(\T)) + \dim(\mathbb{U}(\T))$. 

Finally, we define $e_y:=\bar{y}-\bar{y}_\T$, $e_p:=\bar{p}-\bar{p}_\T$, $e_u:=\bar{u}-\bar{u}_\T$, and the total error $e:=(e_y,e_p,e_u)$. To measure the total error we use $\VERT e \VERT_\Omega=\VERT(e_y,e_p,e_u)\VERT_\Omega$, where $\VERT \cdot \VERT_{\Omega}$ is defined as in \eqref{def:total_error}.
\begin{figure}[!ht]
\centering
\begin{minipage}{0.310\textwidth}\centering
\includegraphics[trim={0 0 0 0},clip,width=1.5cm,height=1.5cm,scale=0.2]{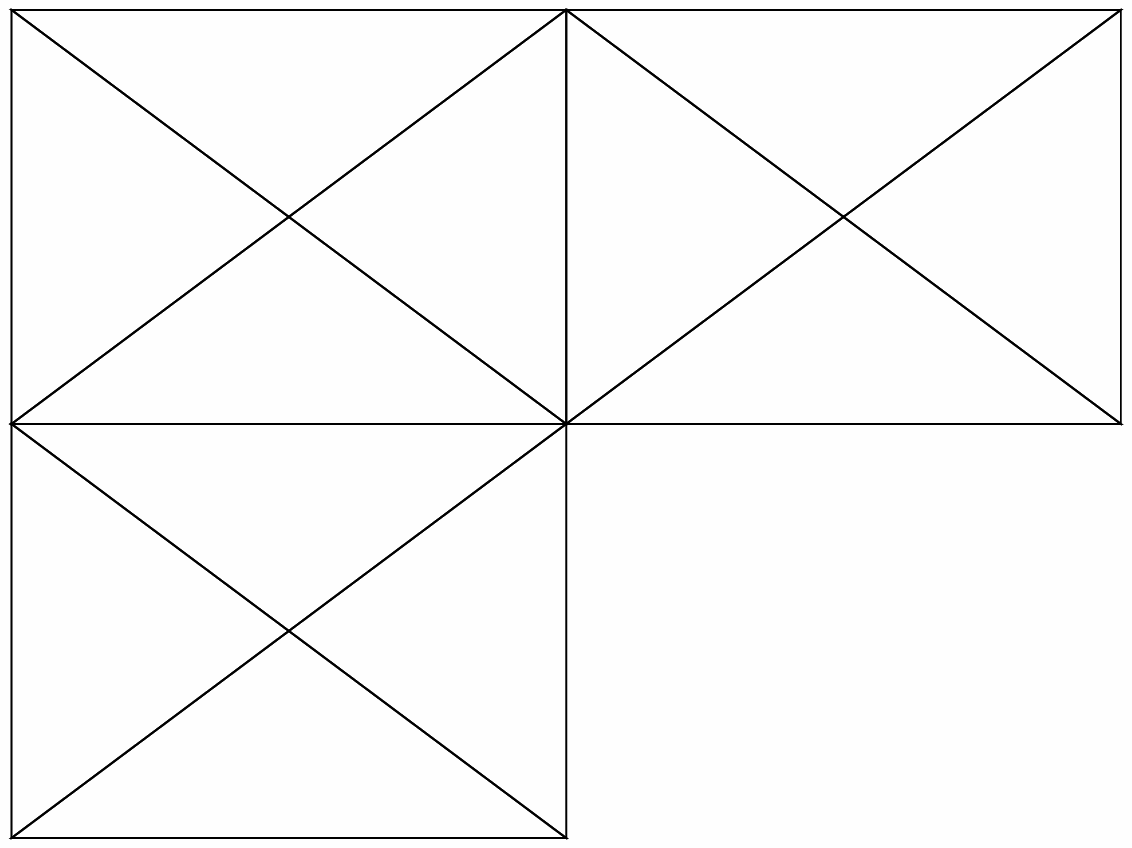}\\
\end{minipage}
\begin{minipage}{0.310\textwidth}\centering
\includegraphics[trim={0 0 0 0},clip,width=1.5cm,height=1.5cm,scale=0.2]{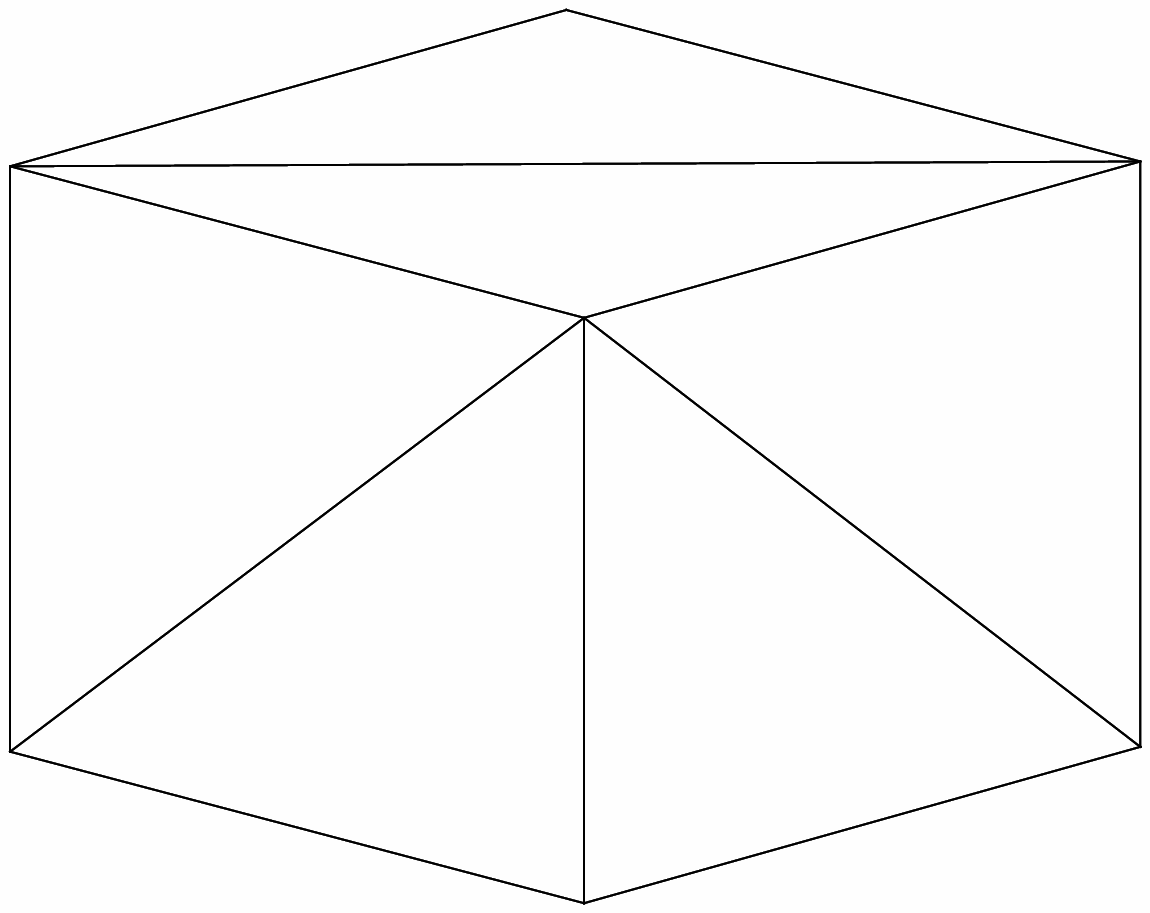}\\
\end{minipage}
\caption{The initial meshes used when the domain $\Omega$ is a $L$-shape (Example 1) and a cube (Example 2).}
\label{fig:initial_meshes}
\end{figure}
\begin{algorithm}[ht]
\caption{\textbf{Adaptive algorithm.}}
\label{Algorithm1}
\textbf{Input:} Initial mesh $\mathscr{T}_{0}$, constraints $\texttt{a}$ and $\texttt{b}$, and regularization parameter $\nu$;
\\
\textbf{Set:} $i=0$.
\\
\textbf{Active set strategy:}
\\
$\boldsymbol{1}$: Choose an initial discrete guess $(y^{0}_{\mathscr{T}_{i}},p^{0}_{\mathscr{T}_{i}},u^{0}_{\mathscr{T}_{i}}) \in \mathbb{V}(\T_{i}) \times \mathbb{V}(\T_{i}) \times \mathbb{U}(\T_{i})$; 
\\
$\boldsymbol{2}$: Compute $[\bar{y}_{\mathscr{T}_{i}},\bar{p}_{\mathscr{T}_{i}},\bar{u}_{\mathscr{T}_{i}}]=\textbf{Active-Set}[\mathscr{T}_i,\texttt{a},\texttt{b},\nu,y^{0}_{\mathscr{T}_{i}},p^{0}_{\mathscr{T}_{i}},u^{0}_{\mathscr{T}_{i}}]$ by using \textbf{Algorithm} \ref{Algorithm2};
\\
\textbf{Adaptive loop:}
\\
$\boldsymbol{3}$: For each $T \in \mathscr{T}_i$ compute the local error indicator $\mathcal{E}_{ocp,T}$ defined in \eqref{def:indicator_ocp};
\\
$\boldsymbol{4}$: Mark an element $T \in \T_i$ for refinement if $\mathcal{E}_{ocp,T}^{2}> \frac{1}{2}\max_{T'\in \mathscr{T}_i}\mathcal{E}_{ocp,T'}^{2}$;
\\
$\boldsymbol{5}$: From step $\boldsymbol{4}$, construct a new mesh, using a longest edge bisection algorithm. Set $i \leftarrow i + 1$ and go to step $\boldsymbol{1}$.
\end{algorithm}
\begin{algorithm}[!ht]
\caption{\textbf{Active set algorithm}
}
\label{Algorithm2}
\textbf{Input:} Mesh $\mathscr{T}$, constraints $\texttt{a}$ and $\texttt{b}$, regularization parameter $\nu$ and initial guess $(y^{0}_{\mathscr{T}},p^{0}_{\mathscr{T}},u^{0}_{\mathscr{T}}) \in \mathbb{V}(\T) \times \mathbb{V}(\T) \times \mathbb{U}(\T)$;
\\
$\boldsymbol{1}$: Define $\boldsymbol{\chi}_{\texttt{a}}^{old} = (\chi_{\texttt{a},T}^{old})_{T \in \T}, \boldsymbol{\chi}_{\texttt{b}}^{old} = (\chi_{\texttt{b},T}^{old})_{T \in \T} \in \mathbb{R}^{\#\T}$ with $\chi_{\texttt{a},T}^{old},\chi_{\texttt{b},T}^{old} \in \{0,1\}$.
\\
\textbf{Set:} $j=0$.
\\
$\boldsymbol{2}$: Compute $[y^{j+1}_{\T},p^{j+1}_{\T},u^{j+1}_{\T}]=\textbf{Newton}[\mathscr{T},\texttt{a},\texttt{b},\nu,\boldsymbol{\chi}_{\texttt{a}}^{old},\boldsymbol{\chi}_{\texttt{b}}^{old},y^{j}_{\T},p^{j}_{\T},u^{j}_{\T}]$ by using \textbf{Algorithm} \ref{Algorithm3}.
\\
$\boldsymbol{3}$: For each $T\in\mathscr{T}$ compute  \vspace*{-0.3cm}
\[ \chi_{\texttt{a},T}^{new} = \left\{\begin{array}{ll}
1 & \text{if } -\frac{1}{\nu}\Pi_{T}\left(p^{j+1}_{\T} \right) < \texttt{a}, \\
0 & \text{otherwise }
\end{array}
\right. 
\qquad 
\chi_{\texttt{b},T}^{new} = \left\{\begin{array}{ll}
1 & \text{if } -\frac{1}{\nu}\Pi_{T}\left(p^{j+1}_{\T} \right) > \texttt{b}, \\
0 & \text{otherwise, }
\end{array}
\right. \] 
where $\Pi_{T}$ denotes the $L^{2}$--projection onto piecewise constant functions over $T$.
\\
$\boldsymbol{4}$: If $\displaystyle{\sum_{T \in \mathscr{T}}}\left(|\chi_{\texttt{a},T}^{new} - \chi_{\texttt{a},T}^{old}| + |\chi_{\texttt{b},T}^{new} - \chi_{\texttt{b},T}^{old}| \right) = 0$, set $(\bar{y}_{\mathscr{T}},\bar{p}_{\mathscr{T}},\bar{u}_{\mathscr{T}}) = (y^{j+1}_{\T},p^{j+1}_{\T},u^{j+1}_{\T}).$
Otherwise, set $\boldsymbol{\chi}_{\texttt{a}}^{old}:=\boldsymbol{\chi}_{\texttt{a}}^{new}$, $\boldsymbol{\chi}_{\texttt{b}}^{old}:=\boldsymbol{\chi}_{\texttt{b}}^{new}$, and $j \leftarrow j+1$, and go to step $\boldsymbol{2}$.
\end{algorithm}

\begin{algorithm}[!ht]
\caption{\hspace{-0.12cm}
\textbf{Newton method
}}
\label{Algorithm3}
\textbf{Input:} Mesh $\mathscr{T}$, constraints $\texttt{a}$ and $\texttt{b}$, regularization parameter $\nu$, initial guess $(y^{0}_{\mathscr{T}},p^{0}_{\mathscr{T}},u^{0}_{\mathscr{T}}) \in \mathbb{V}(\T) \times \mathbb{V}(\T) \times \mathbb{U}(\T)$ and $\boldsymbol{\chi}_{\texttt{a}}, \boldsymbol{\chi}_{\texttt{b}} \in \mathbb{R}^{\#\T}$;
\\
\textbf{Set:} $k=0$.
\\
$\boldsymbol{1}$: Given $(y_\T^{k},p_\T^{k},u_\T^{k})$, compute the incremental $\eta=(\delta y_{\mathscr{T}},\delta p^{}_{\mathscr{T}},\delta u^{}_{\mathscr{T}}) \in \mathbb{V}(\T) \times \mathbb{V}(\T) \times \mathbb{U}(\T)$ as the solution to \eqref{eq:Newton_method_iter}.
\\
$\boldsymbol{2}$: Set $(y^{k+1}_{\T},p^{k+1}_{\T},u^{k+1}_{\T})=(y^{k}_{\T},p^{k}_{\T},u^{k}_{\T}) + (\delta y^{}_\mathscr{T},\delta p^{}_\mathscr{T},\delta u^{}_\mathscr{T})$. 
\\
$\boldsymbol{3}$: If $\max\{ \|\delta y_{\T}\|_{L^{\infty}(\Omega)}, \|\delta p_{\T}\|_{L^{\infty}(\Omega)}, \|\delta u_{\T}\|_{L^{\infty}(\Omega)}\} < 10^{-8}$, set $(y_{\mathscr{T}},p_{\mathscr{T}},u_{\mathscr{T}}) = (y^{k+1}_{\T},p^{k+1}_{\T},u^{k+1}_{\T})$. Otherwise, set $k \leftarrow k+1$ and go to step $\boldsymbol{1}$.
\end{algorithm}

In order to simplify the construction of exact solutions, we incorporate an extra source term $f \in L^{\infty}(\Omega)$ in the state equation \eqref{eq:weak_st_eq}. With such a modification, the right hand side of \eqref{eq:weak_st_eq} now reads $(f+u,v)_{L^2(\Omega)}$.
\\~\\
\textbf{Example 1.} We let $\Omega=(-1,1)^2\setminus[0,1)\times(-1,0]$, $a(\cdot,y) = \arctan(y)$, $\texttt{a} = -40$, $\texttt{b} = -0.1$, and $\nu \in \{ 10^{-3}, 10^{-4}, 10^{-5}\}$ . The exact optimal state and adjoint state are given, in polar coordinates $(r,\theta)$ with $\theta \in [0,3\pi /2]$, by
\[\bar{y}(r,\theta) = \bar{p}(r,\theta) = \sin\left( \pi/2(r\sin\theta) + 1 \right)\sin\left( \pi/2(r\cos\theta) + 1 \right)r^{2/3}\sin(2\theta/3).\]

The purpose of this numerical example is threefold. First, we compare the performance of our adaptive FEM with uniform refinement. Second, we investigate the performance of the devised a posteriori error estimator when varying the parameter $\nu$. Third, we compare the performance of our error estimator with the one presented in \cite[section 3]{MR2024491}. To present the error estimator of \cite{MR2024491}, we introduce
\begin{equation*}
\mathfrak{E}_{st}:=\mathcal{E}_{st}, \quad \mathfrak{E}_{ad}:=\mathcal{E}_{ad}, \quad \mathfrak{E}_{ct,T} := h_{T}\| \nabla \bar{p}_{\T} \|_{L^{2}(T)}, \ \ \mathfrak{E}_{ct} := \left(\displaystyle{\sum_{T \in \T}\mathfrak{E}_{ct,T}^{2}}\right)^{\frac{1}{2}},
\end{equation*}
where $\mathcal{E}_{st}$ and $\mathcal{E}_{ad}$ are defined as in \eqref{eq:error_estimator_state_eq} and \eqref{eq:error_estimator_adjoint_eq}, respectively. The total error indicator can thus be defined as follows \cite[section 3]{MR2024491}:
\begin{equation}\label{eq:chinese_total_indicator}
\mathfrak{E}_{ocp,T}^{2} = \mathfrak{E}_{st,T}^2 + \mathfrak{E}_{ad,T}^2 + \mathfrak{E}_{ct,T}^2.
\end{equation}
This error indicator can be used to perform the adaptive FEM of \textbf{Algorithm} \ref{Algorithm1} with $\mathcal{E}_{ocp,T}$ replaced by $\mathfrak{E}_{ocp,T}$. We shall denote by $\mathfrak{e}_{y}$, $\mathfrak{e}_{p}$, and $\mathfrak{e}_{u}$ the approximation errors related to the state, adjoint state, and control variables, respectively, when the error indicator $\mathfrak{E}_{ocp,T}$ is considered in \textbf{Algorithm} \ref{Algorithm1}. We measure the total error of the underlying AFEM with $\VERT \mathfrak{e}\VERT_\Omega=\VERT(\mathfrak{e}_{y},\mathfrak{e}_{p},\mathfrak{e}_{u})\VERT_{\Omega}$, where $\VERT \cdot \VERT_{\Omega}$ is defined in \eqref{def:total_error}. Finally, we introduce the effectivity indices $\Upsilon_{\mathcal{E}} := \mathcal{E}_{ocp}/\VERT e \VERT_\Omega$ and $\Upsilon_{\mathfrak{E}} := \mathfrak{E}_{ocp}/\VERT \mathfrak{e} \VERT_\Omega$.

In Figures \ref{fig:ex-1.1} and \ref{fig:ex-1.2} we present the results obtained for Example 1. In Figure \ref{fig:ex-1.1} we present, for $\nu=10^{-3}$, experimental rates of convergence for all the individual contributions of the total error $\VERT e \VERT_\Omega$ when uniform and adaptive refinement are considered. We also present the adaptively refined mesh obtained after $24$ adaptive loops.  We observe that our adaptive loop \emph{outperforms} uniform refinement. In addition, we observe optimal experimental rates of convergence for all the individual contributions of the total error $\VERT e \VERT_\Omega$. We also observe that most of the adaptive refinement occurs near to the interface of the control variable and the geometric singularity of the L--shaped domain, which attests to the efficiency of the devised estimator; see subfigure (C). In Figure \ref{fig:ex-1.2}, we present, for $\nu\in\{10^{-4},10^{-5}\}$, experimental rates of convergence for the all the contributions of the total errors $\VERT e \VERT_\Omega$ and $\VERT \mathfrak{e}\VERT_\Omega$ and all the individual contributions of the a posteriori error estimators $\mathcal{E}_{ocp}$ and $\mathfrak{E}_{ocp}$ as well as the effectivity indices $\Upsilon_{\mathcal{E}}$ and $\Upsilon_{\mathfrak{E}}$. We observe that the behavior of the individual contributions of the total errors and error estimators associated to the state and adjoint variables are quite \emph{similar} for both adaptive strategies. However, we observe an \emph{important} difference when we compare the individual contributions associated to the control variable. In fact, as it can be observed from subfigures (B.3) and (D.3), the error norm $\|\mathfrak{e}_u\|_{L^2(\Omega)}$ \emph{do not exhibit an optimal experimental rate of convergence}, while the error norm $\|e_u\|_{L^2(\Omega)}$ associated to our devised AFEM based on the error estimator $\mathcal{E}_{ocp}$ does. Finally, we observe, from subfigures (E) and (F), that the effectivity index $\Upsilon_{\mathcal{E}}$ is close to $1$ for the two different values of $\nu$ that we consider. This shows the accuracy of the proposed a posteriori error estimator $\mathcal{E}_{ocp}$ when used in the adaptive loop described in \textbf{Algorithm} \ref{Algorithm1}.


\begin{figure}[!ht]
\centering
\psfrag{error st}{{\large $\|\nabla e_{y}\|_{L^{2}(\Omega)}$}}
\psfrag{error ad}{{\large $\|\nabla e_{p}\|_{L^{2}(\Omega)}$}}
\psfrag{error ct}{{\large $\| e_{u}\|_{L^{2}(\Omega)}$}}
\psfrag{e total}{{\large $\VERT{(e_{y},e_{p},e_{u}\VERT}_{\Omega}$}}
\psfrag{orden h 11111}{{\normalsize $\mathsf{Ndof}^{-1/2}$}}
\psfrag{orden h 23333}{{\normalsize $\mathsf{Ndof}^{-1/3}$}}
\psfrag{Ndofs}{{\large $\mathsf{Ndof}$}}
\begin{minipage}[c]{0.34\textwidth}\centering
\psfrag{errors - l0001}{\hspace{-0.3cm}\large{Errors with unif. refinement}}
\includegraphics[trim={0 0 0 0},clip,width=4.4cm,height=3.2cm,scale=0.33]{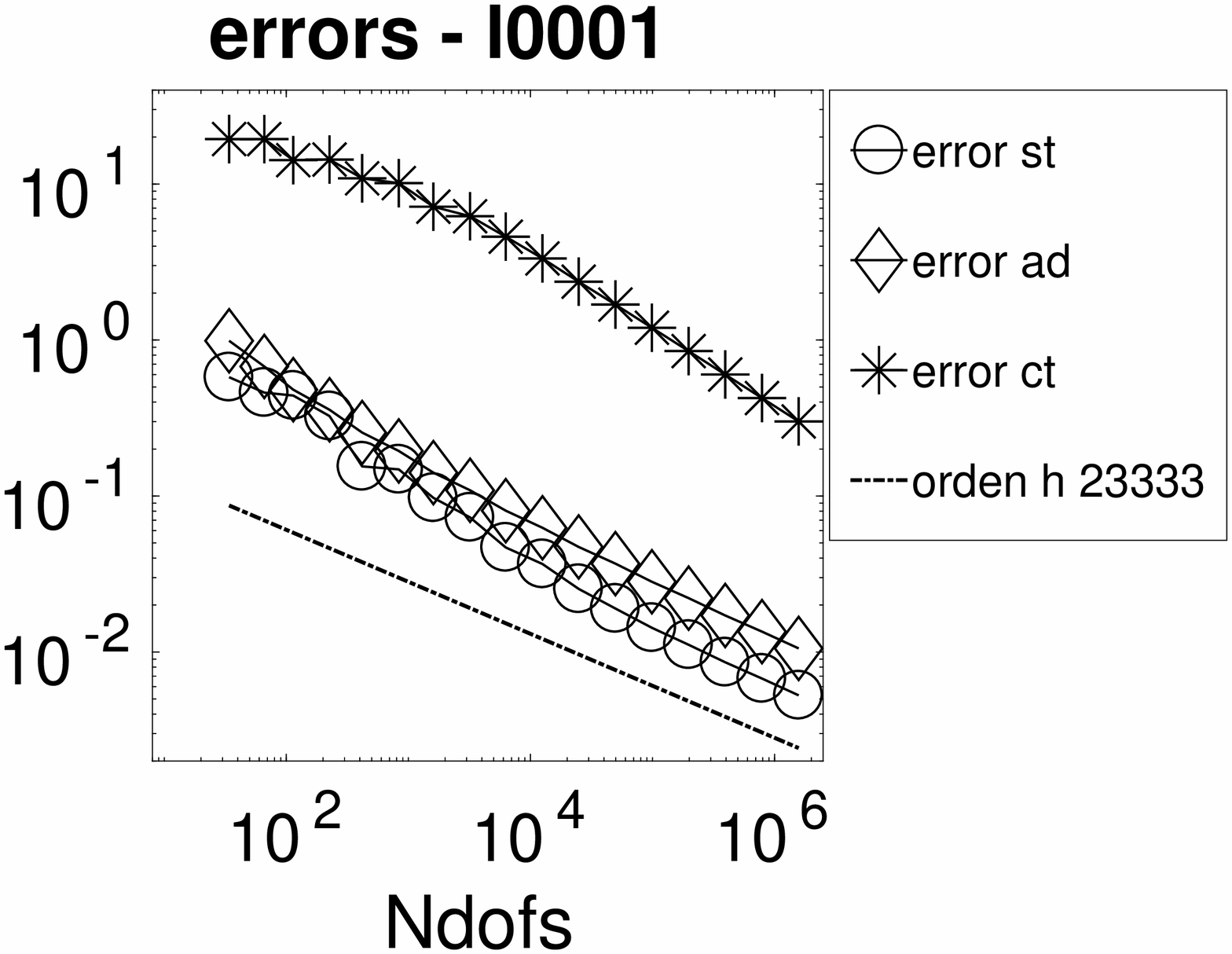}\\
\hspace{-1.0cm}\tiny{(A)}
\end{minipage}
\begin{minipage}[c]{0.34\textwidth}\centering
\psfrag{errors - l0001}{\hspace{-0.3cm}\large{Errors with adap. refinement}}
\includegraphics[trim={0 0 0 0},clip,width=4.4cm,height=3.2cm,scale=0.33]{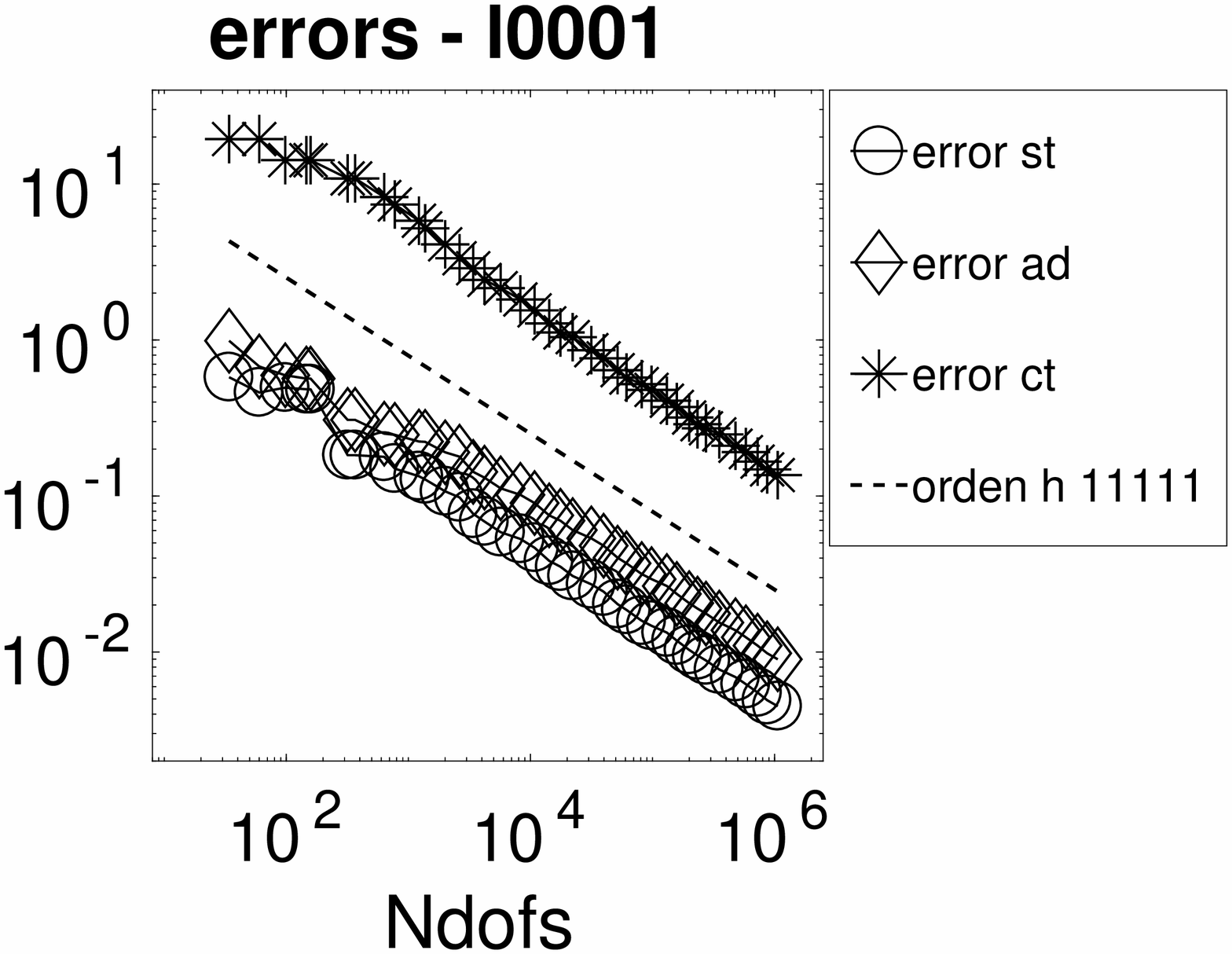}\\
\hspace{-1.0cm}\tiny{(B)}
\end{minipage}
\begin{minipage}[c]{0.28\textwidth}\centering
\psfrag{errors - l0001}{\hspace{-0.3cm}\large{Errors with adap. refinement}}
\includegraphics[trim={0 0 0 0},clip,width=3.3cm,height=2.7cm,scale=0.33]{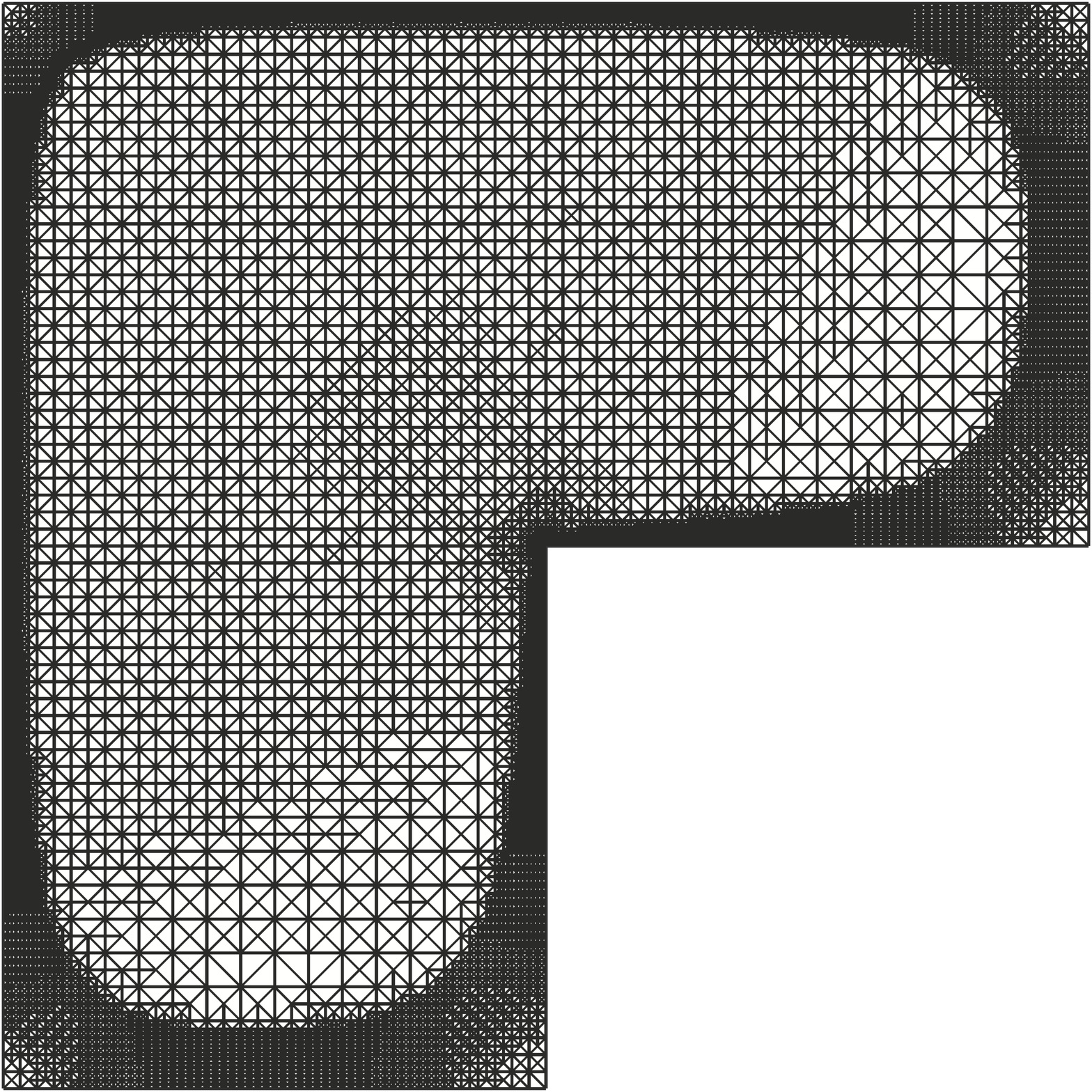}\\
\hspace{-1.0cm}\tiny{(C)}
\end{minipage}
\caption{Example 1. Experimental rates of convergence for the individual contributions $\|\nabla e_{y}\|_{L^{2}(\Omega)},\|\nabla e_{p}\|_{L^{2}(\Omega)}$, and $\| e_{u}\|_{L^{2}(\Omega)}$ for uniform (A) and adaptive refinement (B) and the $24$th adaptively refined mesh (C) for $\nu = 10^{-3}$.}
\label{fig:ex-1.1}
\end{figure}


\begin{figure}[!ht]
\centering
\psfrag{Ndof-32}{{\normalsize $\mathsf{Ndof}^{-3/2}$}}
\psfrag{orden h}{{\normalsize $\mathsf{Ndof}^{-1/2}$}}
\psfrag{orden h 11111}{{\normalsize $\mathsf{Ndof}^{-1/2}$}}
\psfrag{orden h 11}{{\normalsize $\mathsf{Ndof}^{-1/2}$}}
\psfrag{orden h 1}{{\normalsize $\mathsf{Ndof}^{-1/2}$}}
\psfrag{Ndofs}{{\large $\mathsf{Ndof}$}}
\begin{minipage}[c]{0.247\textwidth}\centering
{\footnotesize{Estimator contributions}\\
\footnotesize{for $\nu = 10^{-4}$}}~\\~\\
\psfrag{eta st n}{{\large $\mathcal{E}_{st}$}}
\psfrag{eta ad n}{{\large $\mathcal{E}_{ad}$}}
\psfrag{eta ct n}{{\large $\mathcal{E}_{ct}$}}
\psfrag{eta st c}{{\large $\mathfrak{E}_{st}$}}
\psfrag{eta ad c}{{\large $\mathfrak{E}_{ad}$}}
\psfrag{eta ct c}{{\large $\mathfrak{E}_{ct}$}}
\psfrag{orden h 1}{{\normalsize $\mathsf{Ndof}^{-1/2}$}}
\psfrag{Ndofs}{{\large $\mathsf{Ndof}$}}
\psfrag{etas st}{\hspace{-1.1cm}\large{State error estimators }}
\includegraphics[trim={0 0 0 0},clip,width=3.2cm,height=2.8cm,scale=0.32]{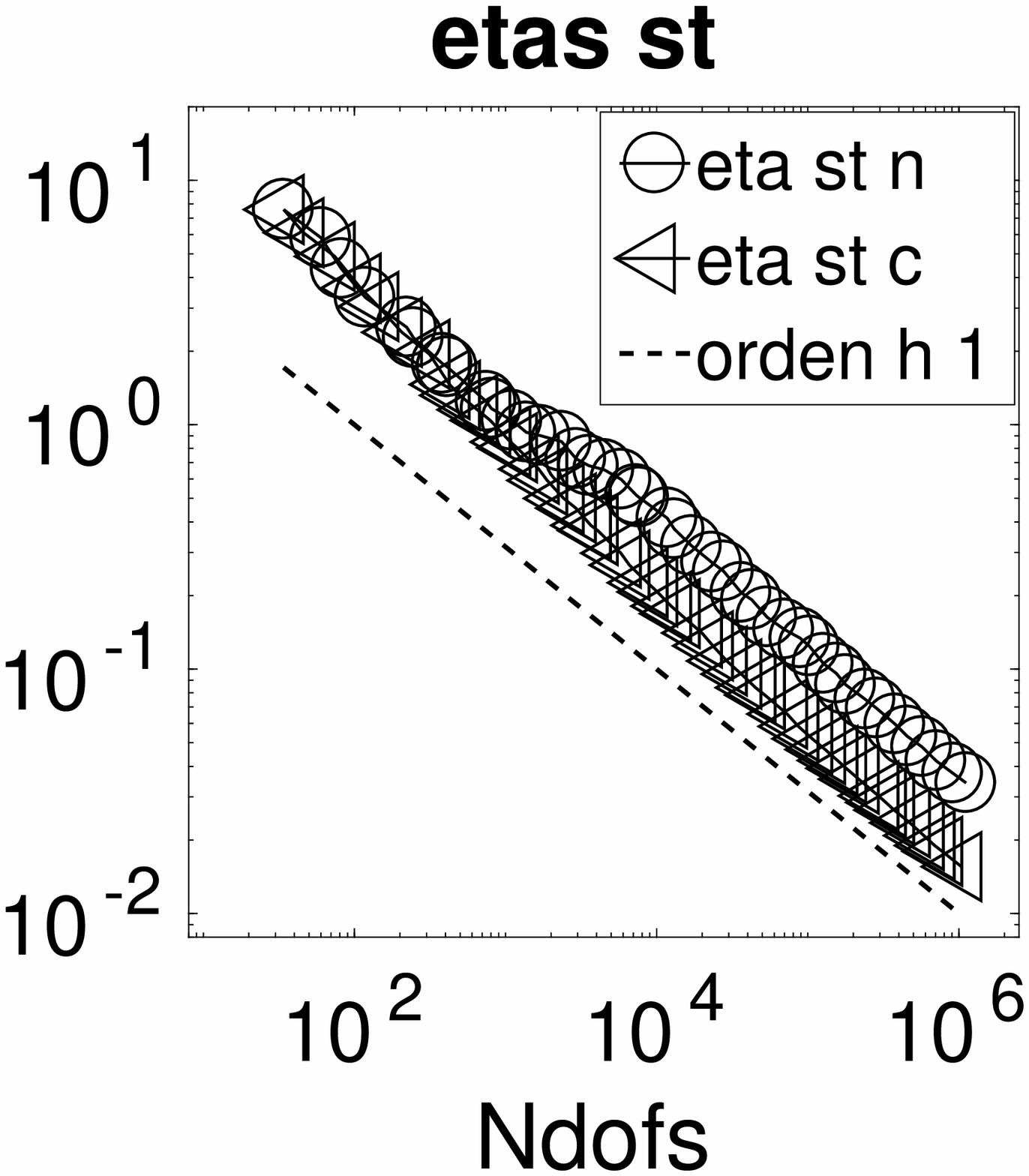}\\
\hspace{0.3cm}\tiny{(A.1)}~\\~\\
\psfrag{etas ad}{\hspace{-1.3cm}\large{Adjoint error estimators }}
\includegraphics[trim={0 0 0 0},clip,width=3.2cm,height=2.8cm,scale=0.32]{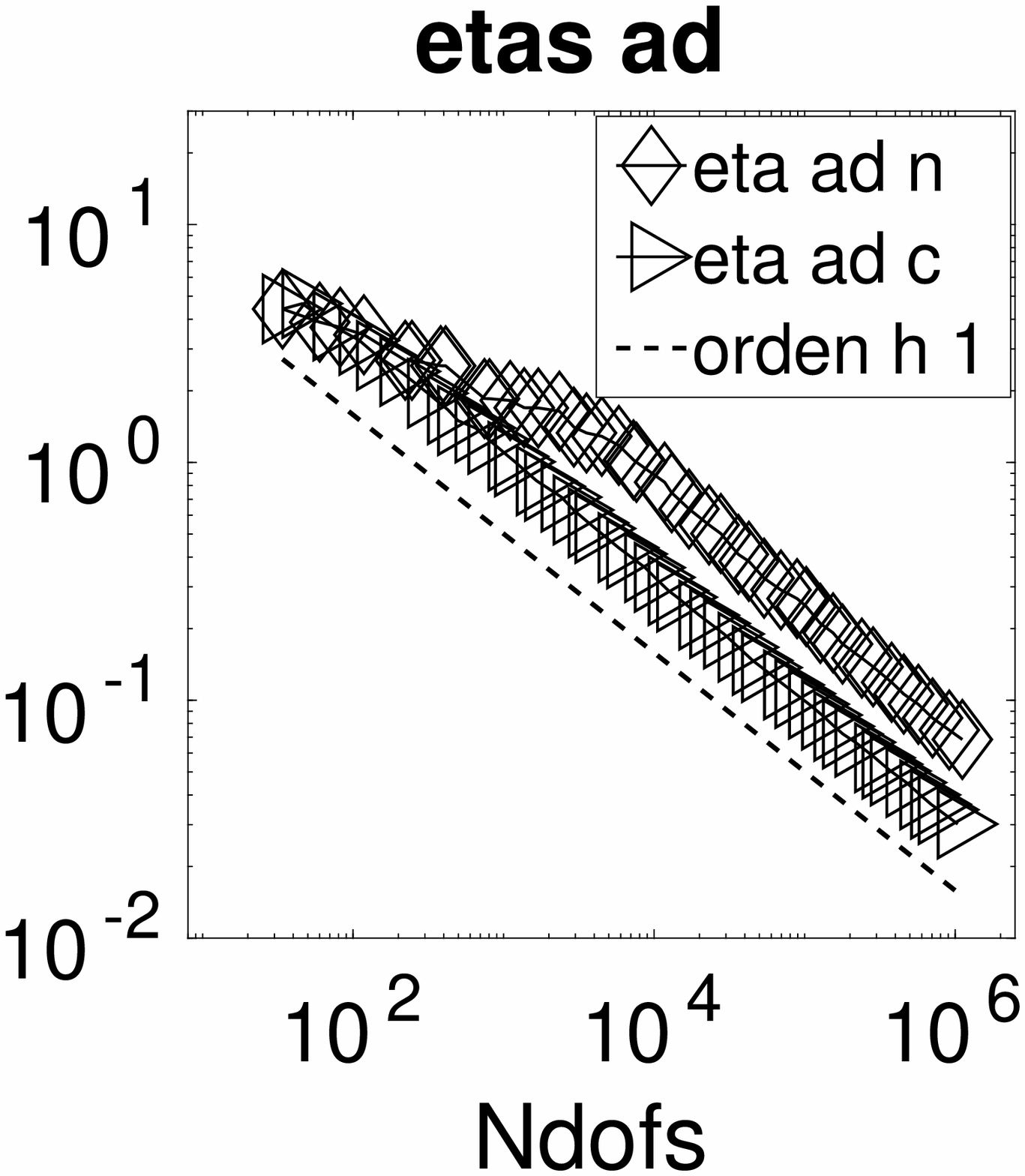}\\
\hspace{0.3cm}\tiny{(A.2)}~\\~\\
\psfrag{etas ct}{\hspace{-1.35cm}\large{Control error estimators }}
\includegraphics[trim={0 0 0 0},clip,width=3.2cm,height=2.8cm,scale=0.32]{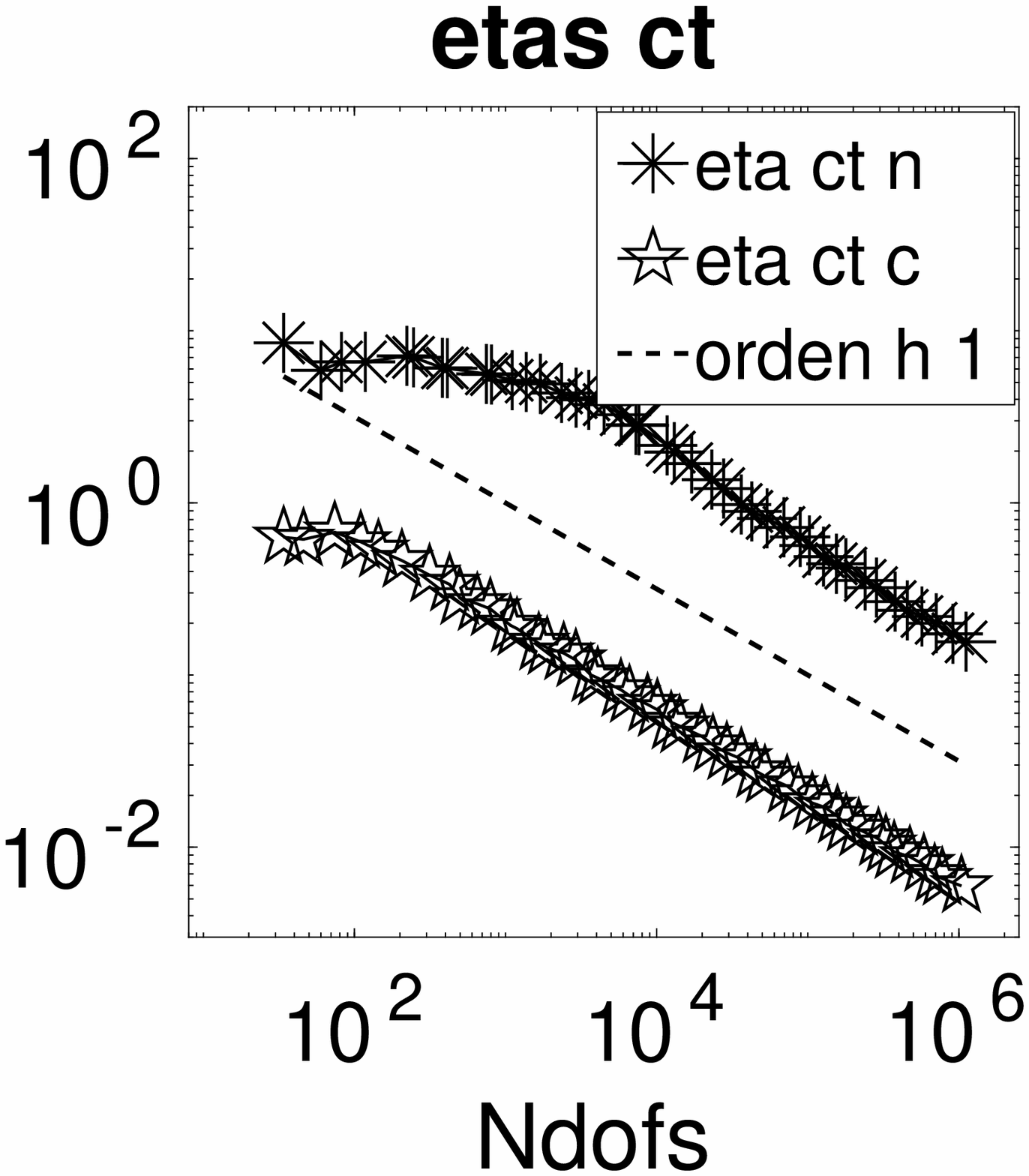}\\
\hspace{0.3cm}\tiny{(A.3)}~\\~\\
\end{minipage}
\begin{minipage}[c]{0.247\textwidth}\centering
{\footnotesize{Error contributions}\\
\footnotesize{for $\nu = 10^{-4}$}}~\\~\\
\psfrag{errores st n}{{\large $ \|\nabla e_{y}\|_{L^{2}(\Omega)}$}}
\psfrag{errores ad n}{{\large $ \|\nabla e_{p}\|_{L^{2}(\Omega)}$}}
\psfrag{errores ct n}{{\large $\|e_{u}\|_{L^{2}(\Omega)}$}}
\psfrag{errores st c}{{\large $ \|\nabla \mathfrak{e}_{y}\|_{L^{2}(\Omega)}$}}
\psfrag{errores ad c}{{\large $ \|\nabla \mathfrak{e}_{p}\|_{L^{2}(\Omega)}$}}
\psfrag{errores ct c}{{\large $\|\mathfrak{e}_{u}\|_{L^{2}(\Omega)}$}}
\psfrag{orden h 11}{{\normalsize $\mathsf{Ndof}^{-1/2}$}}
\psfrag{orden h 1111}{{\normalsize $\mathsf{Ndof}^{-1/2}$}}
\psfrag{Ndofs}{{\large $\mathsf{Ndof}$}}
\psfrag{errores st}{\hspace{-0.2cm}\large{State errors}}
\includegraphics[trim={0 0 0 0},clip,width=3.2cm,height=2.8cm,scale=0.32]{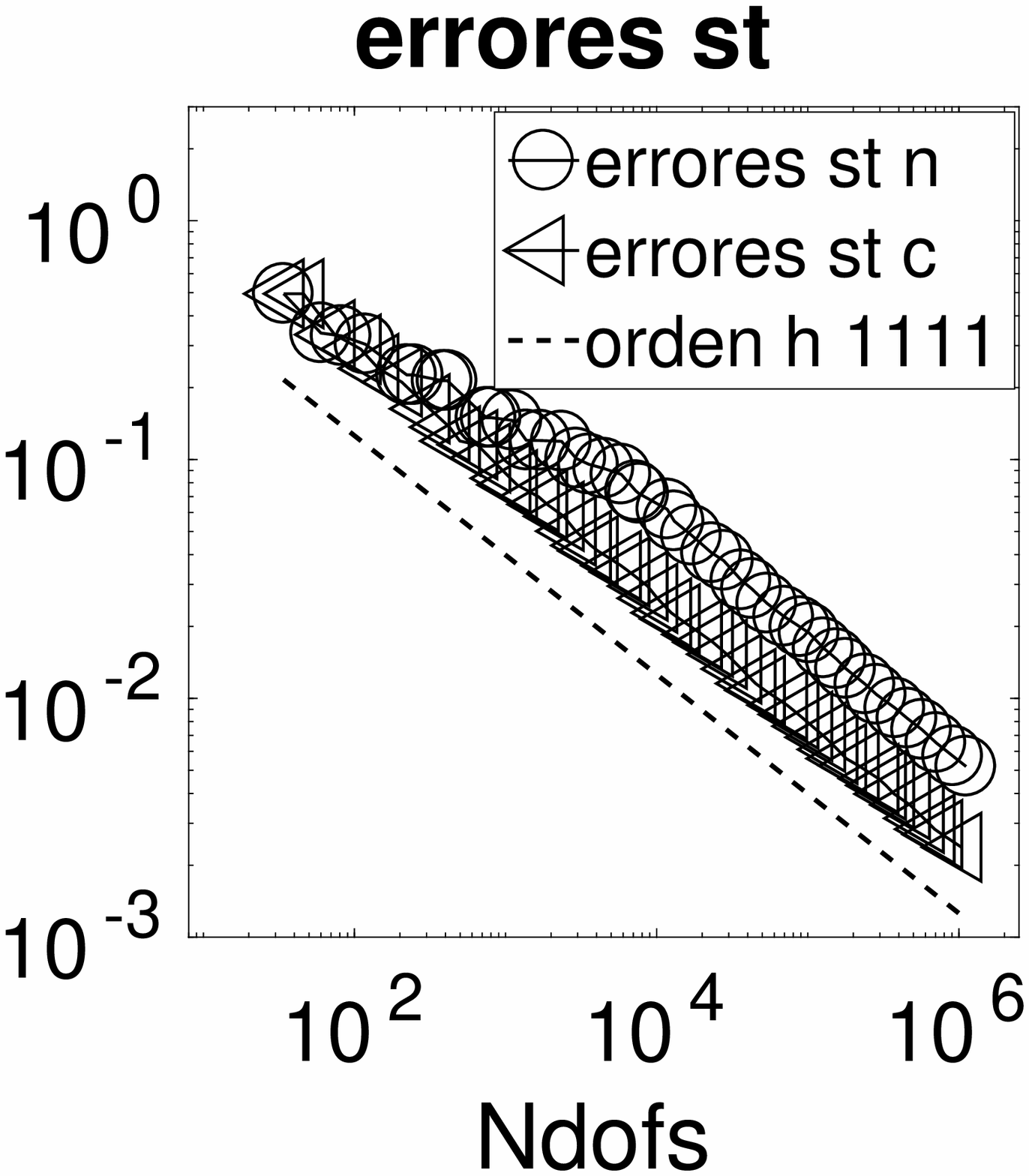}\\
\hspace{0.3cm}\tiny{(B.1)}~\\~\\
\psfrag{errores ad}{\hspace{-0.2cm}\large{Adjoint errors}}
\includegraphics[trim={0 0 0 0},clip,width=3.2cm,height=2.8cm,scale=0.32]{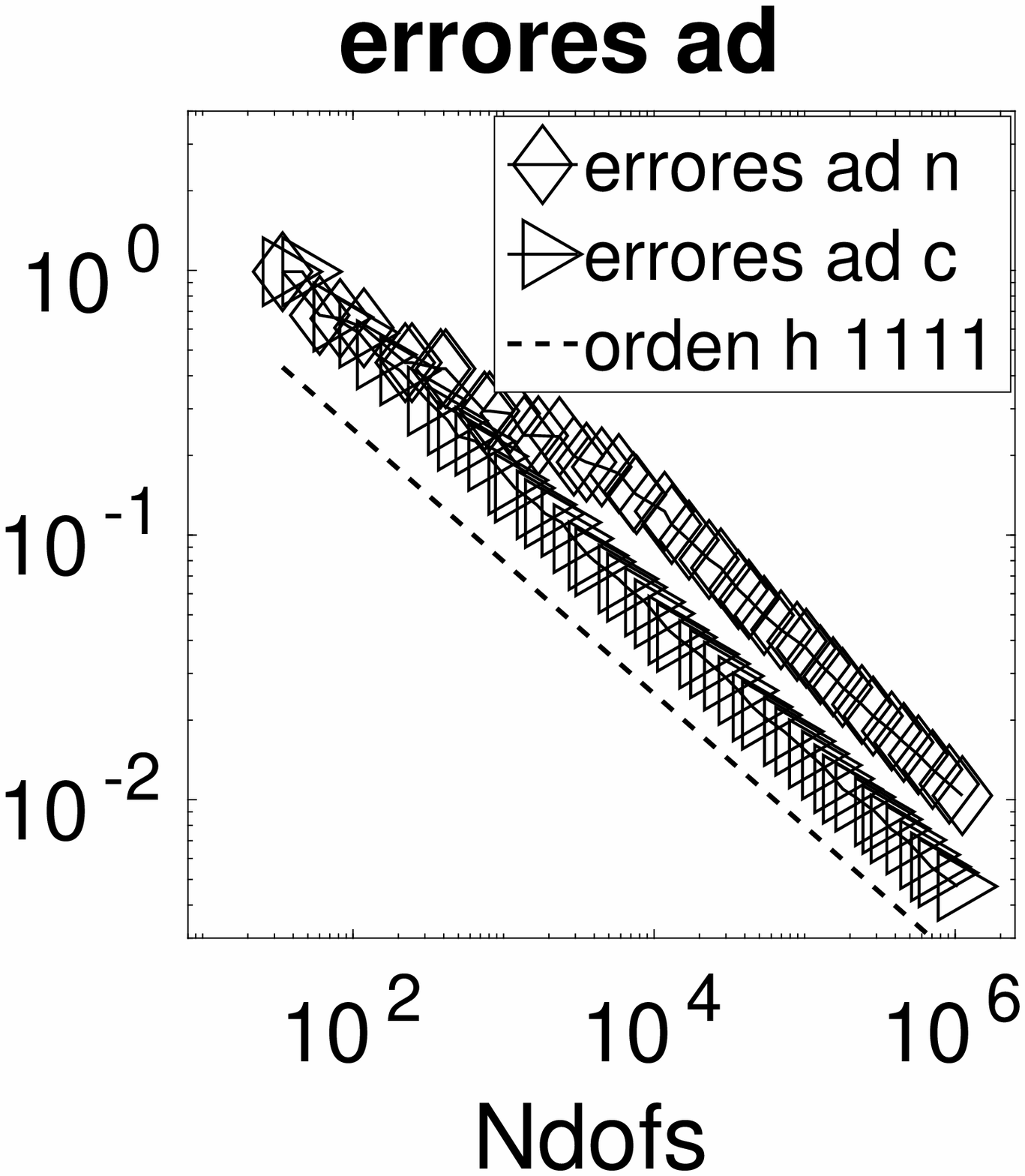}\\
\hspace{0.3cm}\tiny{(B.2)}~\\~\\
\psfrag{errores ct}{\hspace{-0.2cm}\large{Control errors}}
\includegraphics[trim={0 0 0 0},clip,width=3.2cm,height=2.8cm,scale=0.32]{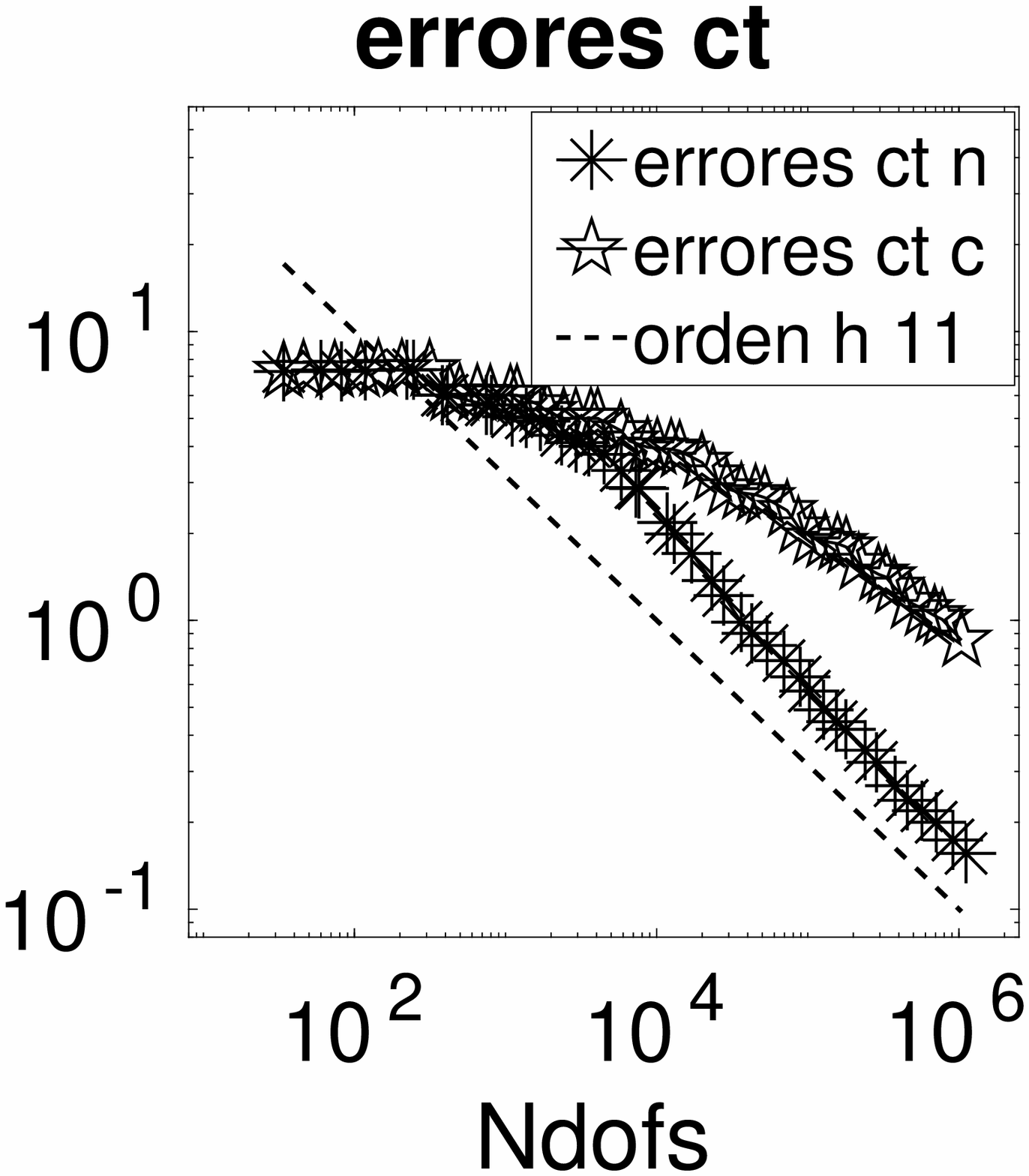}\\
\hspace{0.3cm}\tiny{(B.3)}~\\~\\
\end{minipage}
\begin{minipage}[c]{0.247\textwidth}\centering
{\footnotesize{Estimator contributions}\\
\footnotesize{for $\nu = 10^{-5}$}}~\\~\\
\psfrag{eta st n}{{\large $\mathcal{E}_{st}$}}
\psfrag{eta ad n}{{\large $\mathcal{E}_{ad}$}}
\psfrag{eta ct n}{{\large $\mathcal{E}_{ct}$}}
\psfrag{eta st c}{{\large $\mathfrak{E}_{st}$}}
\psfrag{eta ad c}{{\large $\mathfrak{E}_{ad}$}}
\psfrag{eta ct c}{{\large $\mathfrak{E}_{ct}$}}
\psfrag{orden h 111}{{\normalsize $\mathsf{Ndof}^{-1/2}$}}
\psfrag{Ndofs}{{\large $\mathsf{Ndof}$}}
\psfrag{etas st}{\hspace{-1.1cm}\large{State error estimators }}
\includegraphics[trim={0 0 0 0},clip,width=3.2cm,height=2.8cm,scale=0.32]{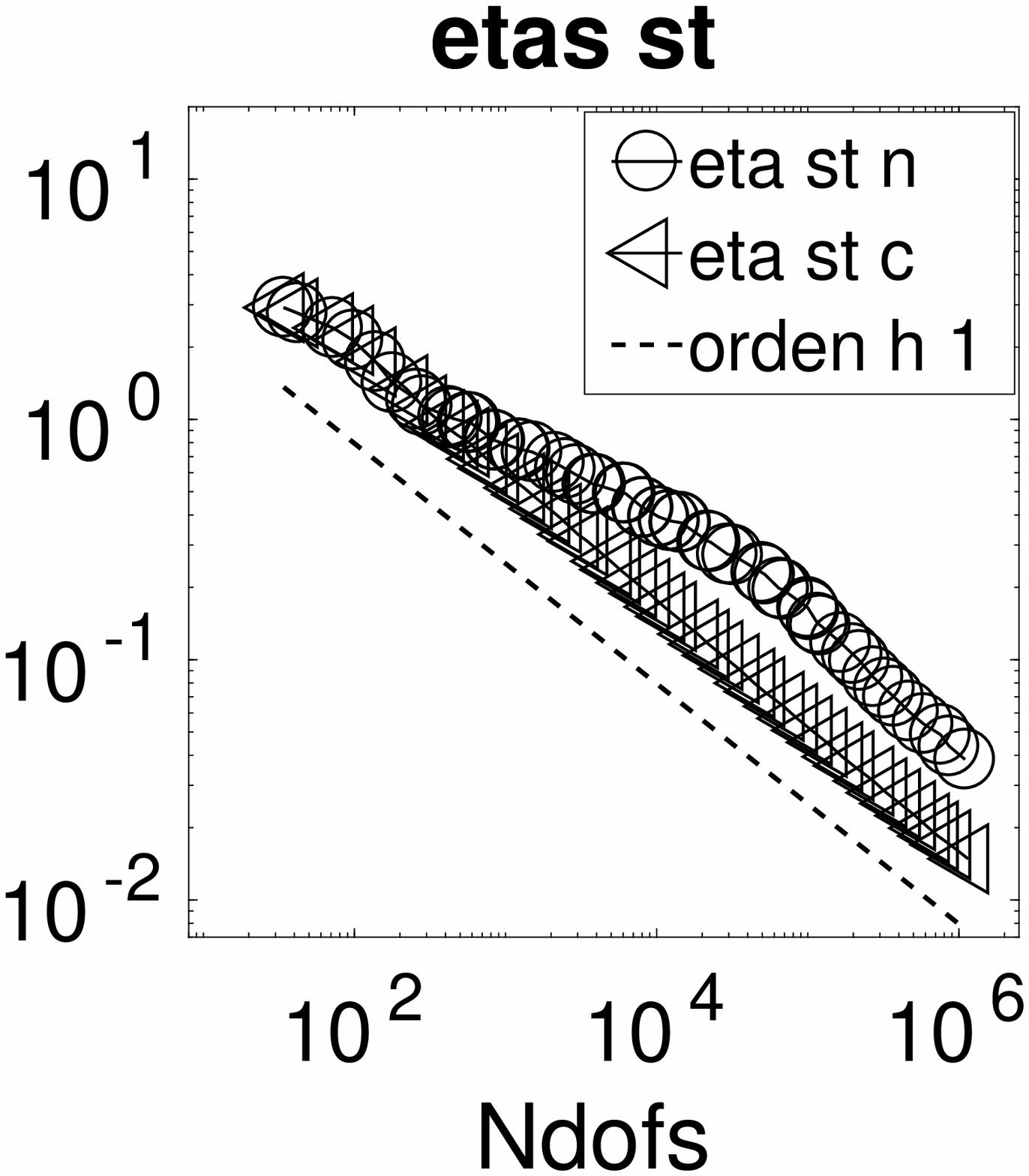}\\
\hspace{-0.0cm}\tiny{(C.1)}~\\~\\
\psfrag{etas ad}{\hspace{-1.3cm}\large{Adjoint error estimators }}
\includegraphics[trim={0 0 0 0},clip,width=3.2cm,height=2.8cm,scale=0.32]{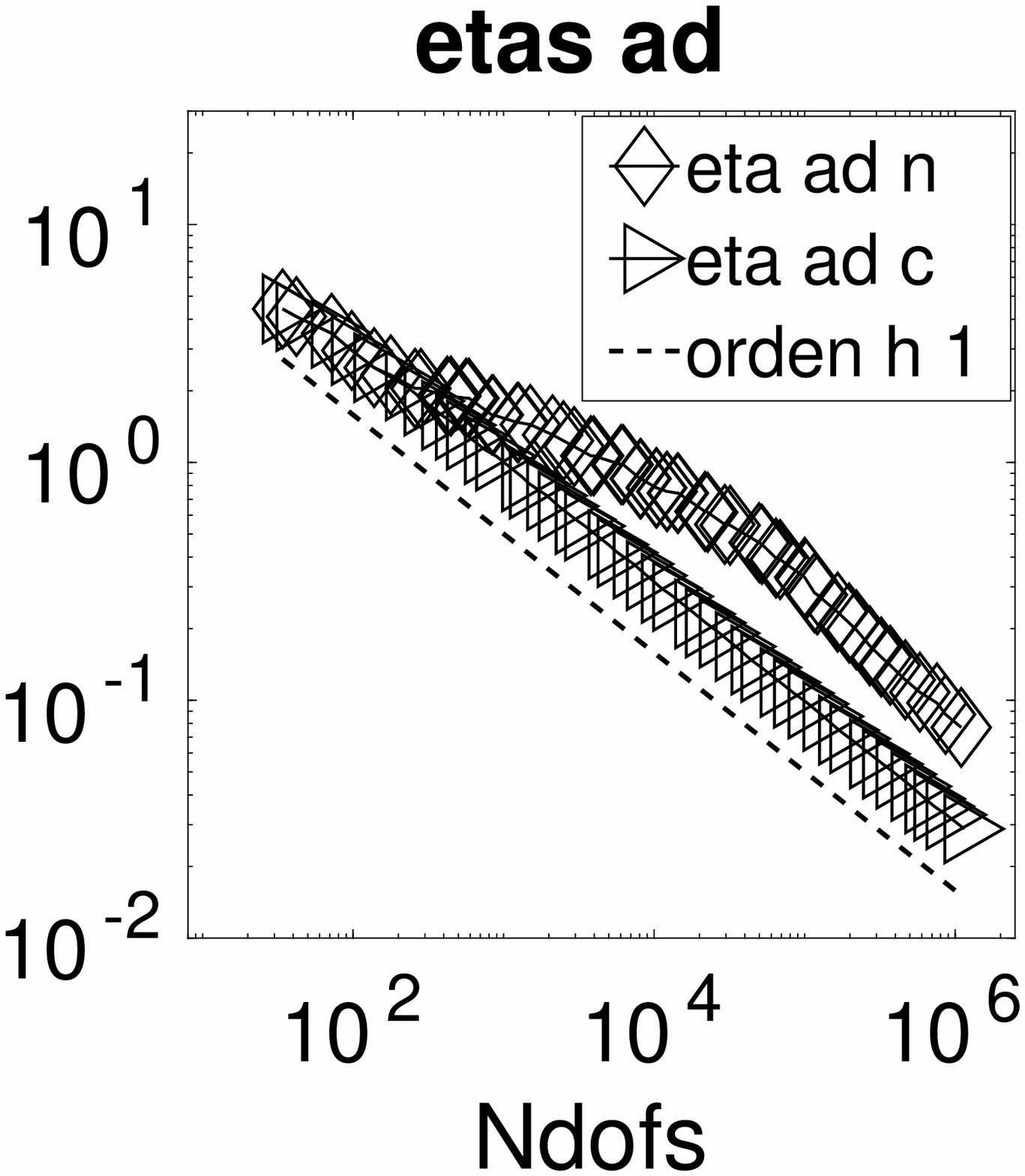}\\
\hspace{-0.0cm}\tiny{(C.2)}~\\~\\
\psfrag{etas ct}{\hspace{-1.35cm}\large{Control error estimators }}
\includegraphics[trim={0 0 0 0},clip,width=3.2cm,height=2.8cm,scale=0.32]{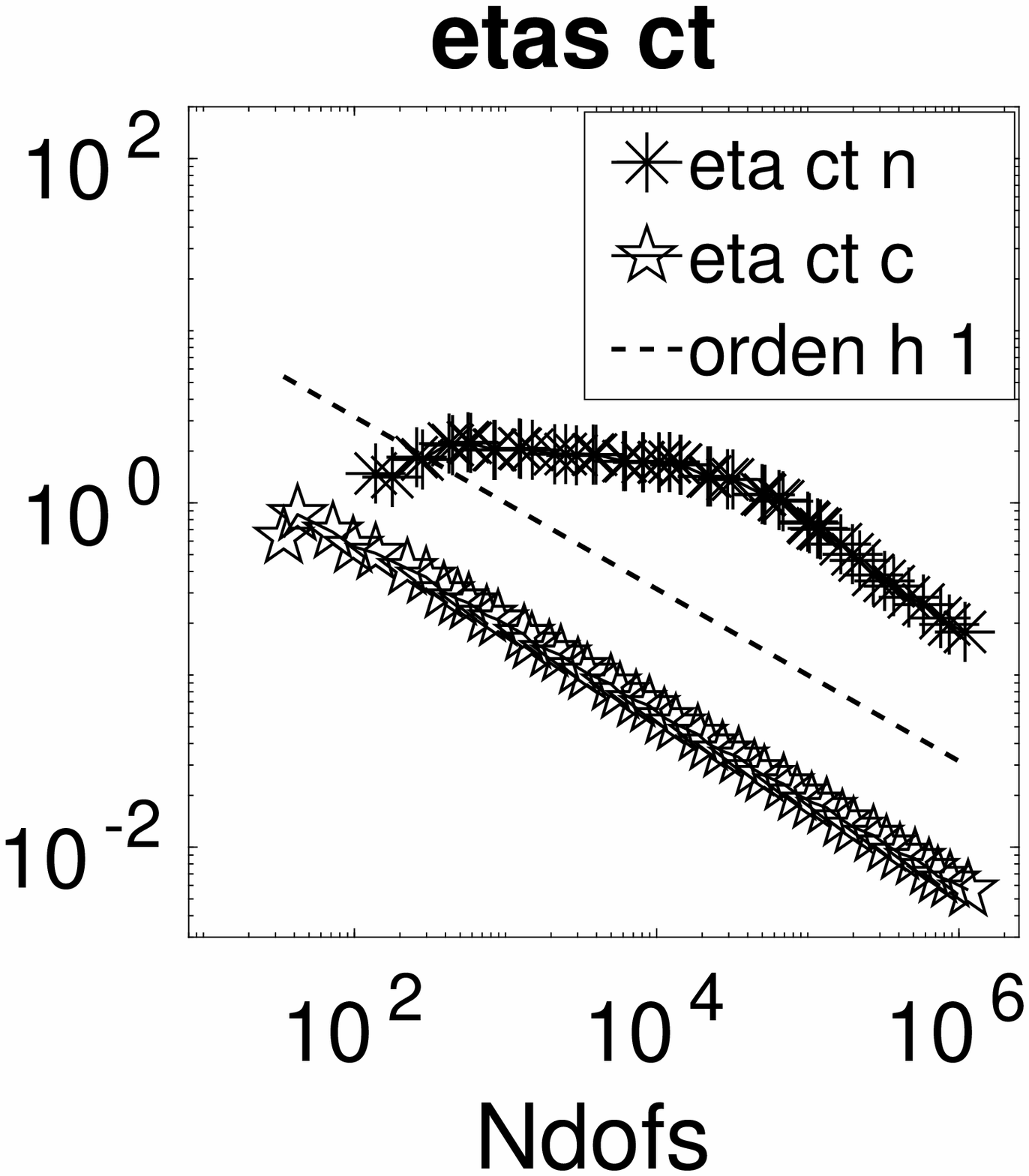}\\
\hspace{-0.0cm}\tiny{(C.3)}~\\~\\
\end{minipage}
\begin{minipage}[c]{0.247\textwidth}\centering
{\footnotesize{Error contributions}\\
\footnotesize{for $\nu = 10^{-5}$}}~\\~\\
\psfrag{errores st n}{{\large $ \|\nabla e_{y}\|_{L^{2}(\Omega)}$}}
\psfrag{errores ad n}{{\large $ \|\nabla e_{p}\|_{L^{2}(\Omega)}$}}
\psfrag{errores ct n}{{\large $\|e_{u}\|_{L^{2}(\Omega)}$}}
\psfrag{errores st c}{{\large $ \|\nabla \mathfrak{e}_{y}\|_{L^{2}(\Omega)}$}}
\psfrag{errores ad c}{{\large $ \|\nabla \mathfrak{e}_{p}\|_{L^{2}(\Omega)}$}}
\psfrag{errores ct c}{{\large $\|\mathfrak{e}_{u}\|_{L^{2}(\Omega)}$}}
\psfrag{orden h 1111}{{\normalsize $\mathsf{Ndof}^{-1/2}$}}
\psfrag{Ndofs}{{\large $\mathsf{Ndof}$}}
\psfrag{errores st}{\hspace{-0.2cm}\large{State errors}}
\includegraphics[trim={0 0 0 0},clip,width=3.2cm,height=2.8cm,scale=0.32]{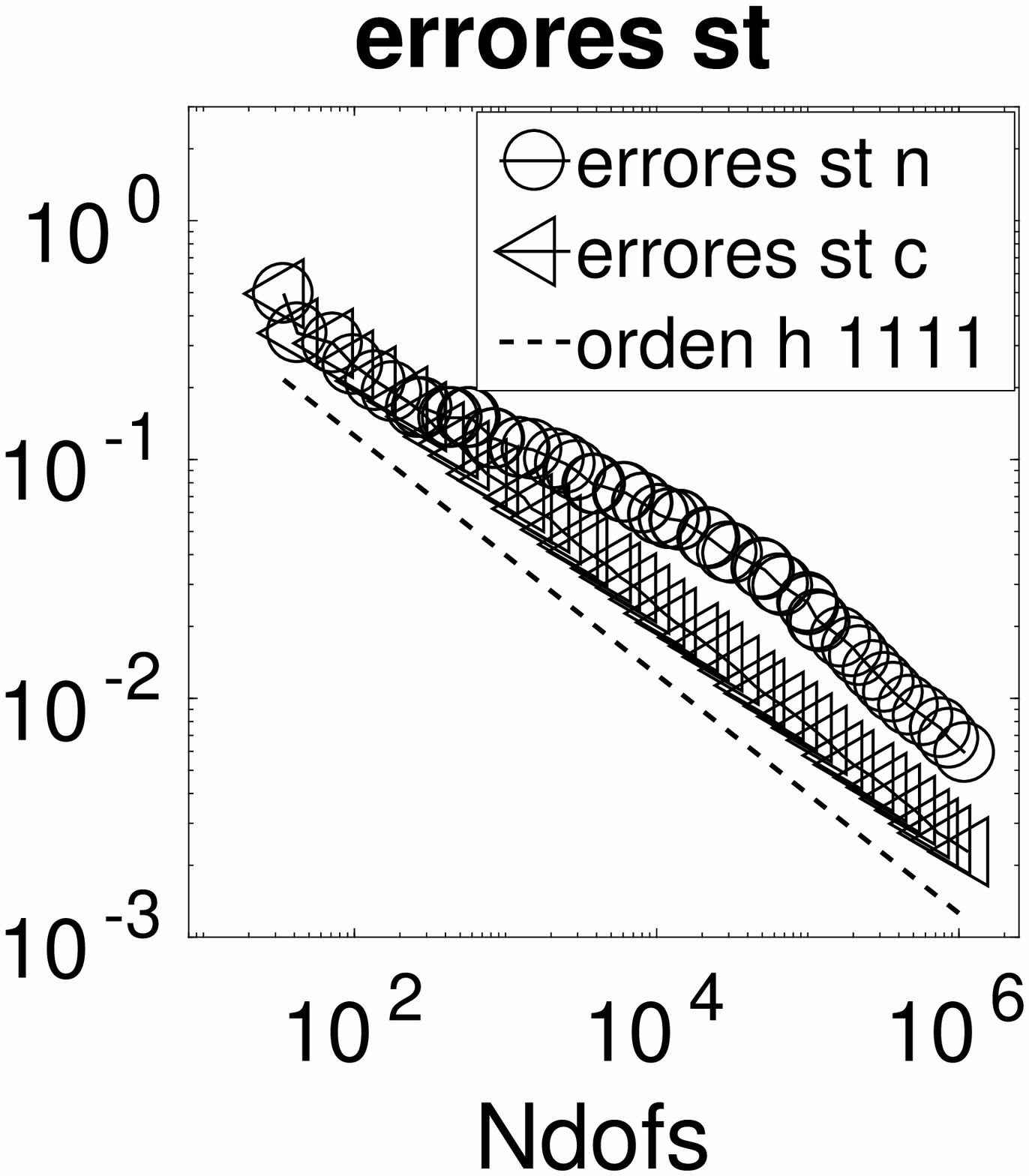}\\
\hspace{0.3cm}\tiny{(D.1)}~\\~\\
\psfrag{errores ad}{\hspace{-0.2cm}\large{Adjoint errors}}
\includegraphics[trim={0 0 0 0},clip,width=3.2cm,height=2.8cm,scale=0.32]{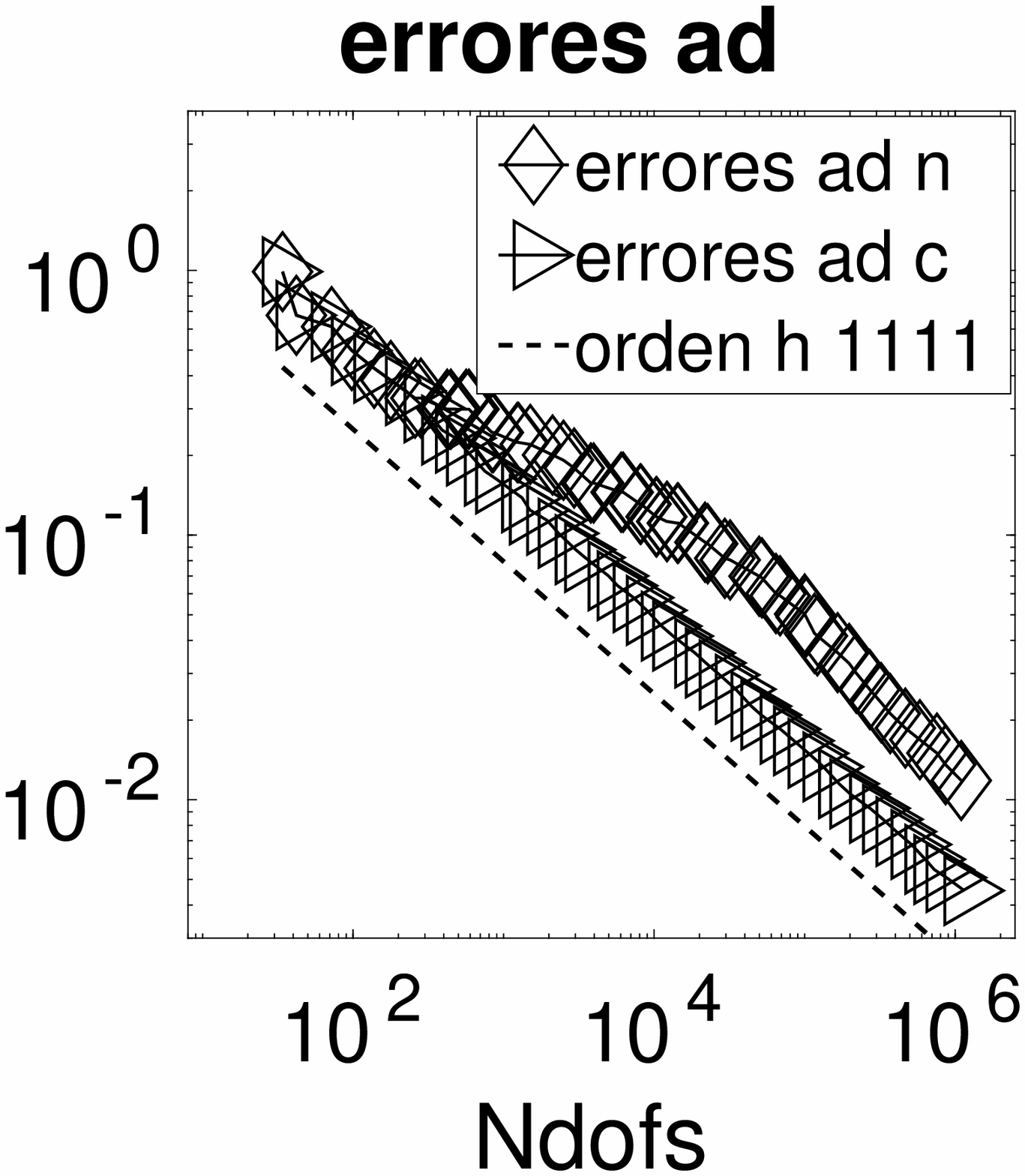}\\
\hspace{0.3cm}\tiny{(D.2)}~\\~\\
\psfrag{errores ct}{\hspace{-0.2cm}\large{Control errors}}
\includegraphics[trim={0 0 0 0},clip,width=3.2cm,height=2.8cm,scale=0.32]{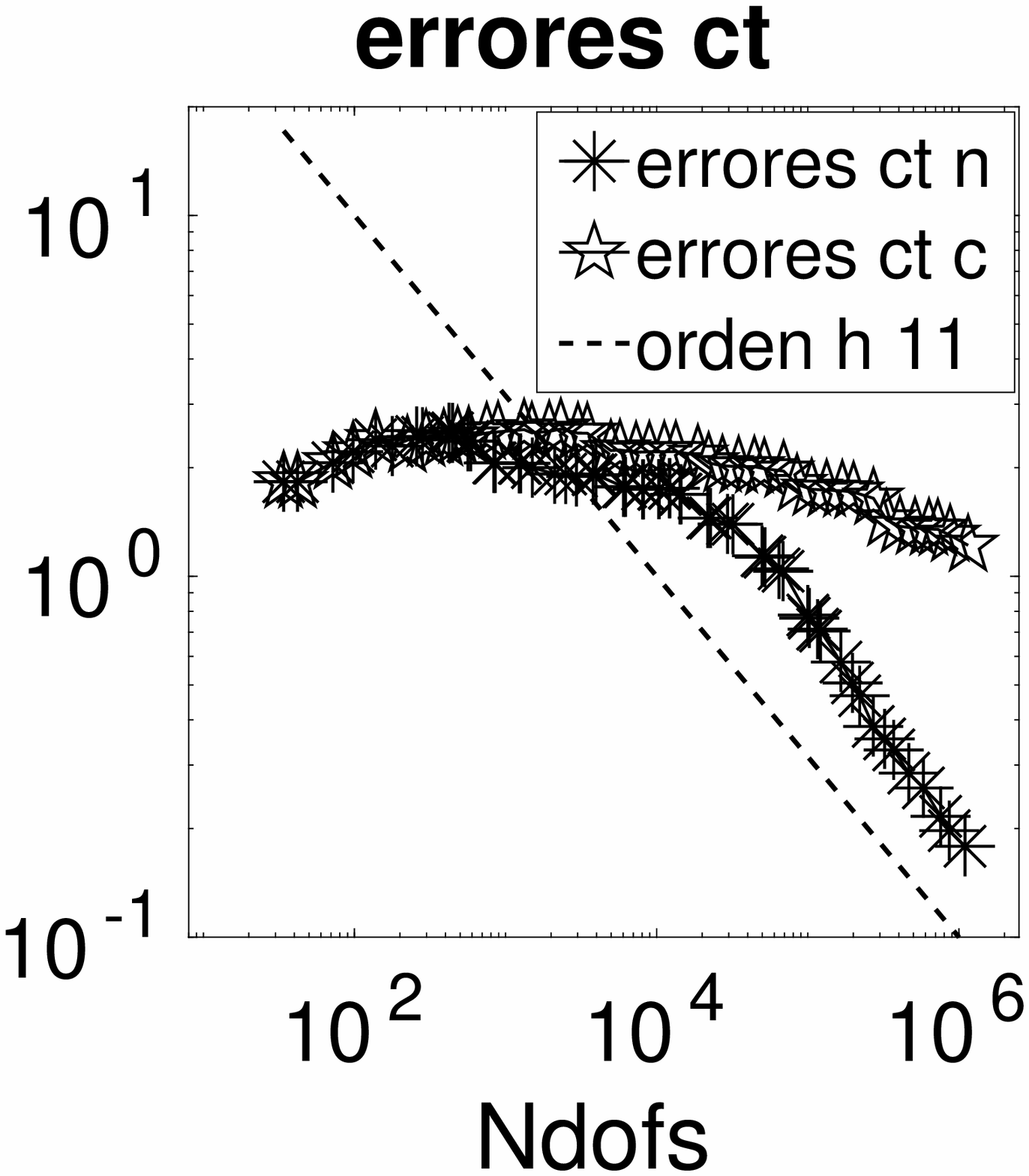}\\
\hspace{0.3cm}\tiny{(D.3)}~\\~\\
\end{minipage}
\\ ~ \\
\begin{minipage}[c]{0.4\textwidth}\centering
\psfrag{IE N}{{\large $\Upsilon_{\mathcal{E}}$}}
\psfrag{IE C}{{\large $\Upsilon_{\mathfrak{E}}$}}
\psfrag{indice ef}{\hspace{-1.4cm}\large{Effectivity indices for $\nu = 10^{-4}$}}
\includegraphics[trim={0 0 0 0},clip,width=3.8cm,height=3.0cm,scale=0.32]{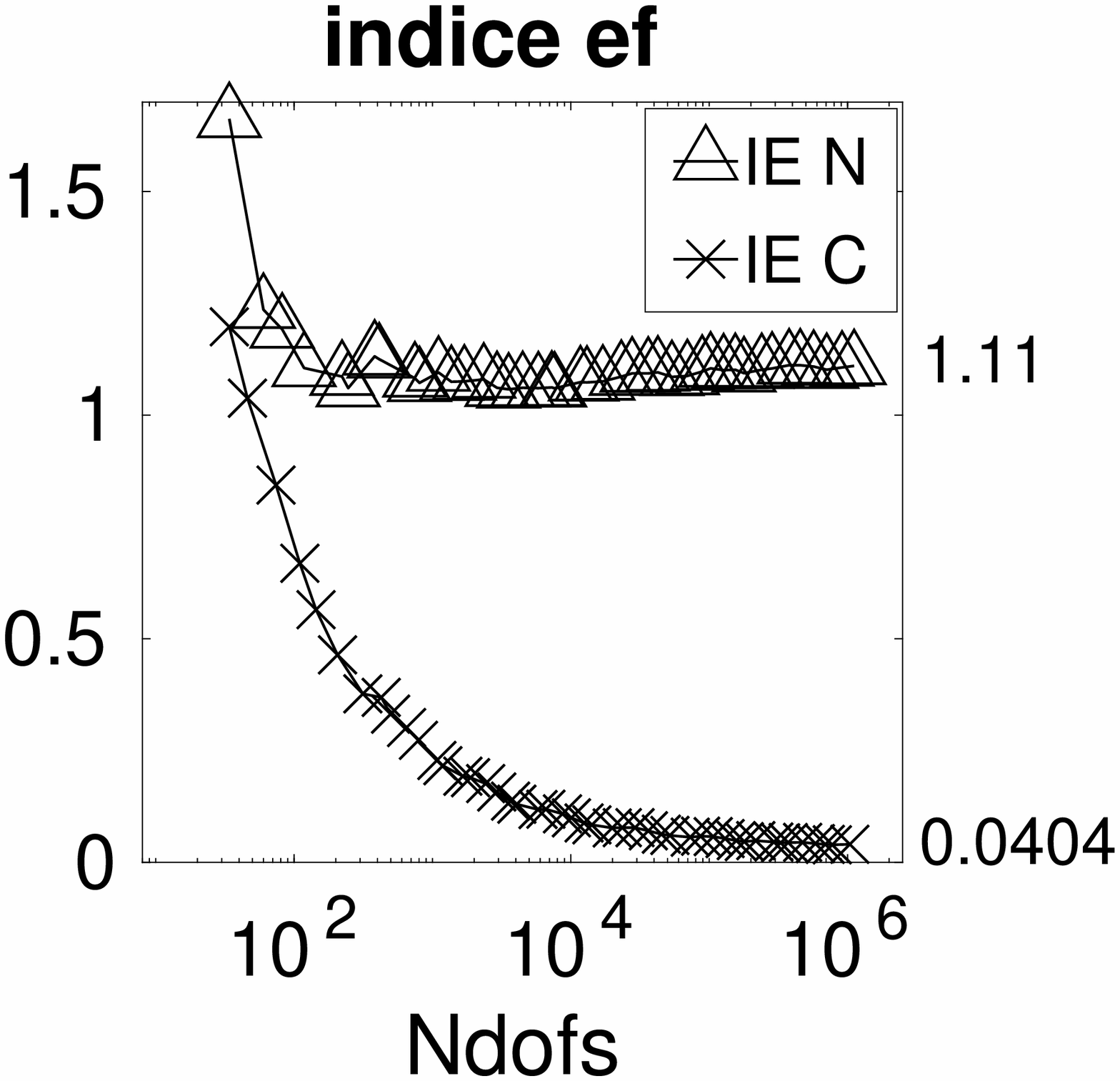}\\
\hspace{0.3cm}\tiny{(E)}
\end{minipage}
\begin{minipage}[c]{0.4\textwidth}\centering
\psfrag{IE N}{{\large $\Upsilon_{\mathcal{E}}$}}
\psfrag{IE C}{{\large $\Upsilon_{\mathfrak{E}}$}}
\psfrag{indice ef}{\hspace{-1.4cm}\large{Effectivity indices for $\nu = 10^{-5}$}}
\includegraphics[trim={0 0 0 0},clip,width=3.5cm,height=3.0cm,scale=0.32]{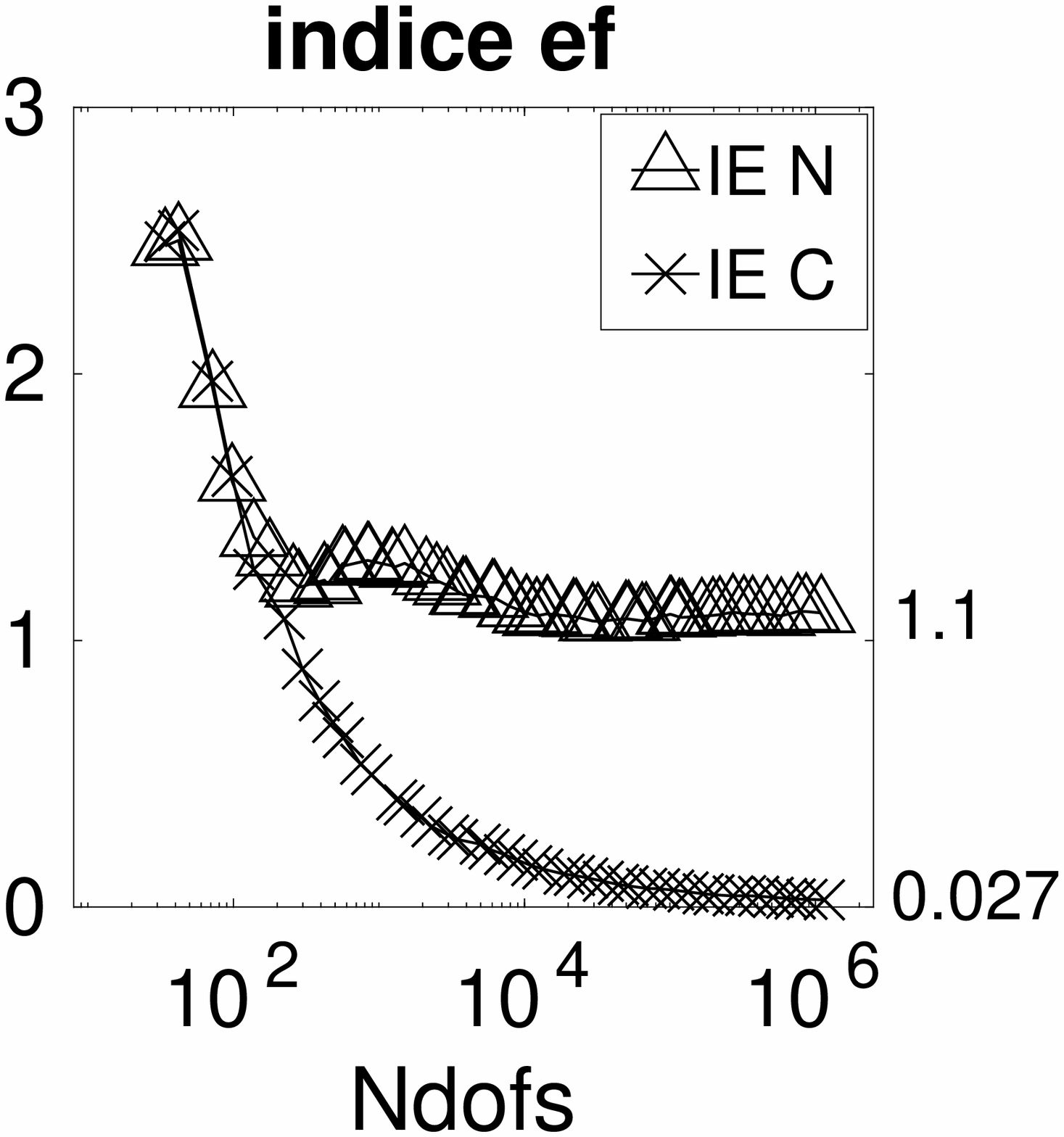}\\
\hspace{0.3cm}\tiny{(F)}~\\~\\
\end{minipage}
\caption{Example 1. Experimental rates of convergence for all the contributions of $\mathcal{E}_{ocp}$ (A.1)--(A.3) and $\mathfrak{E}_{ocp}$ (C.1)--(C.3), experimental rates of convergence for all the contributions of the total errors $\VERT e \VERT_\Omega$ (B.1)--(B.3) and $\VERT \mathfrak{e}\VERT_\Omega$ (D.1)--(D.3), and the effectivity indices $\Upsilon_{\mathcal{E}}$ and $\Upsilon_{\mathfrak{E}}$ with $\nu = 10^{-4}$ (E) and $\nu = 10^{-5}$ (F).}
\label{fig:ex-1.2}
\end{figure}

\textbf{Example 2.} We let $\Omega=(0,1)^3$, $\texttt{a} = -80$, $\texttt{b} = 100$, and $\nu = 10^{-3}$. We consider 
\[f(x_{1},x_{2},x_{3}) = 10, \quad 
y_{\Omega}(x_{1},x_{2},x_{3}) = \left\{\begin{array}{cl}
10^{2}e^{\frac{1}{\xi}}\cos(4\pi\xi), & \text{if } \xi < 0, \\
0, & \text{if } \xi \geq 0,
\end{array}
\right.
\]
where $\xi = \xi(x_{1},x_{2},x_{3}) = 4(x_{1} - 0.5)^{2} + 4(x_{2} - 0.5)^{2} + 4(x_{3} - 0.5)^{2} - 1$.

The purpose of this numerical example is to investigate the performance of the devised error estimator when different choices of the nonlinear function $a$ are considered. Let us, in particular, consider
\[ a_{1}(\cdot,y) =  10y^{3} - 2; \quad a_{2}(\cdot,y) =  10\arctan(80y) - 5; \quad a_{3}(\cdot,y) =  10\sinh(3y) - 2. \]

In Figure \ref{fig:ex_2} we present the results obtained for Example 2. We show, for the considered three different nonlinear functions $a$, experimental rates of convergence for all the individual contributions of the error estimator $\mathcal{E}_{ocp}$ and the obtained $25$th adaptively refined meshes. We observe optimal experimental rates of convergence for all the individual contributions of the error estimator $\mathcal{E}_{ocp}$.


\begin{figure}[!ht]
\centering
\psfrag{eta total}{{\large $\mathcal{E}_{ocp}$}}
\psfrag{eta ad}{{\large $\mathcal{E}_{ad}$}}
\psfrag{eta st}{{\large $\mathcal{E}_{st}$}}
\psfrag{eta ct}{{\large $\mathcal{E}_{ct}$}}
\psfrag{orden h 111}{{\normalsize $\mathsf{Ndof}^{-1/3}$}}
\psfrag{Ndofs}{{\large $\mathsf{Ndof}$}}
\begin{minipage}[c]{0.333\textwidth}\centering
$a = a_{1}$\\
\psfrag{etas}{\hspace{-1.7cm}\large{Estimator contributions}}
\includegraphics[trim={0 0 0 0},clip,width=4.35cm,height=3.8cm,scale=0.33]{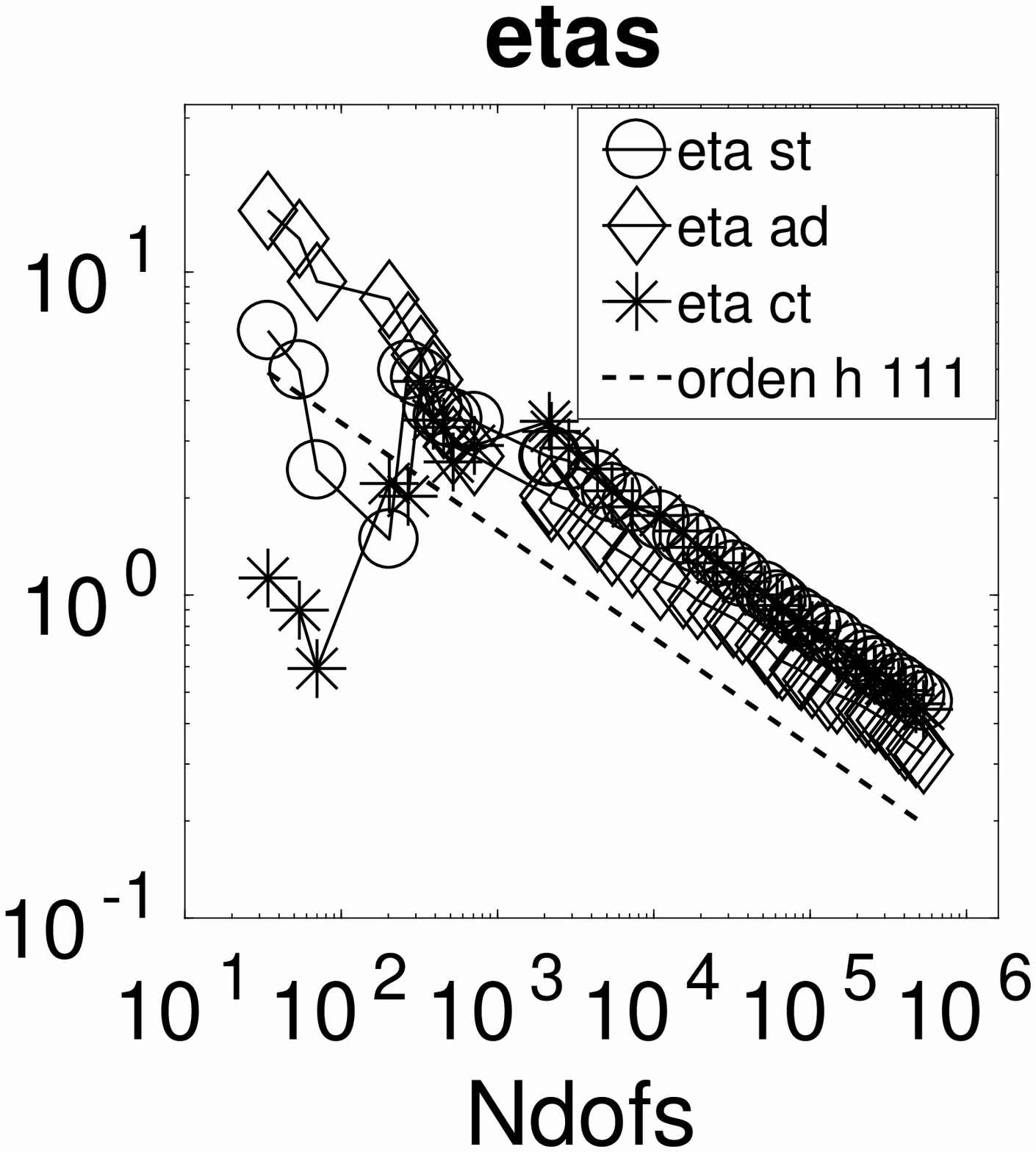}\\
\hspace{0.0cm}\tiny{(A.1)}\\
\includegraphics[trim={0 0 0 0},clip,width=3.4cm,height=3.4cm,scale=0.3]{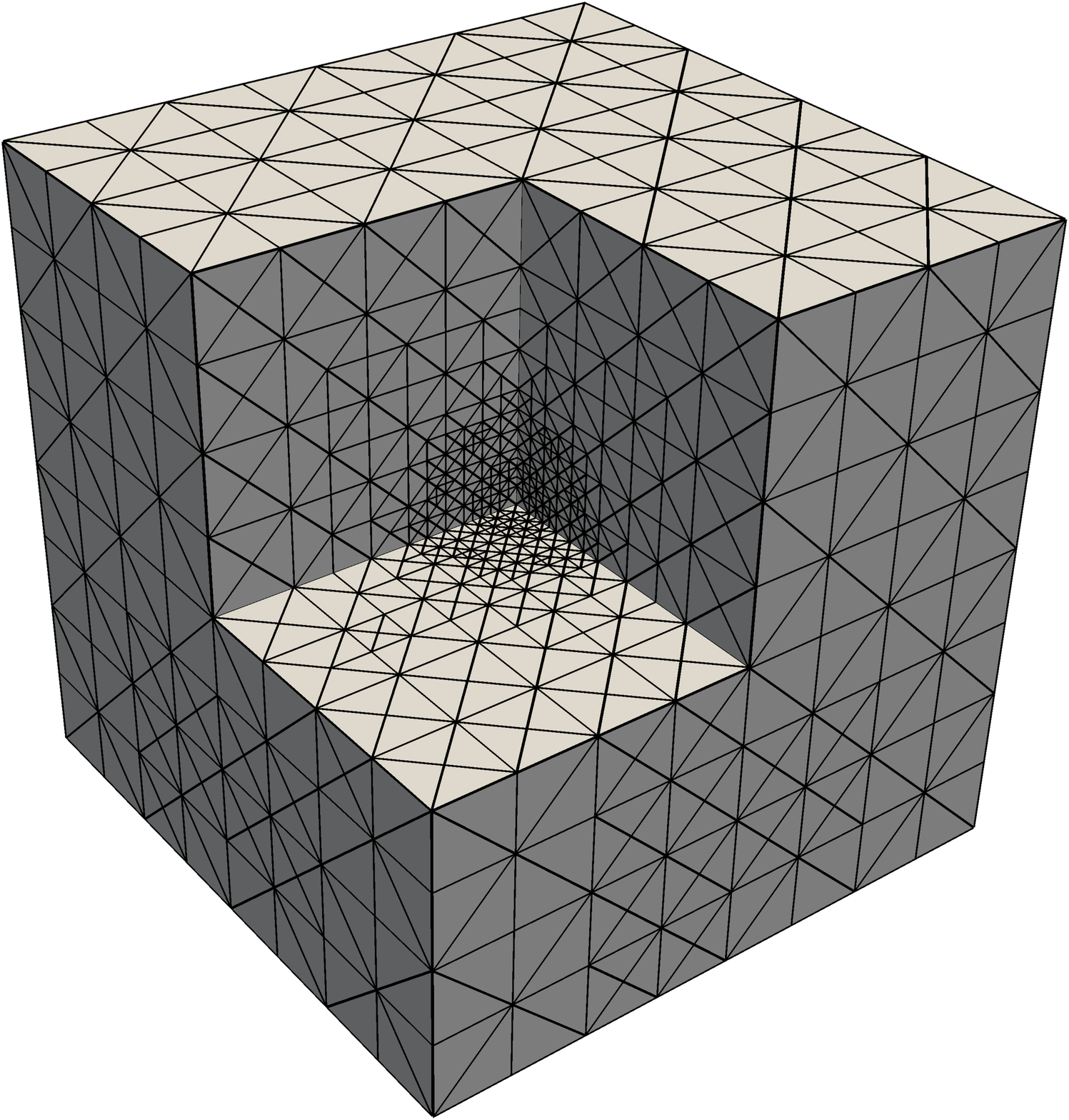}\\
\hspace{0.0cm}\tiny{(B.1)}\\
\end{minipage}
\begin{minipage}[c]{0.333\textwidth}\centering
$a = a_{2}$\\
\psfrag{etas}{\hspace{-1.7cm}\large{Estimator contributions}}
\includegraphics[trim={0 0 0 0},clip,width=4.35cm,height=3.8cm,scale=0.33]{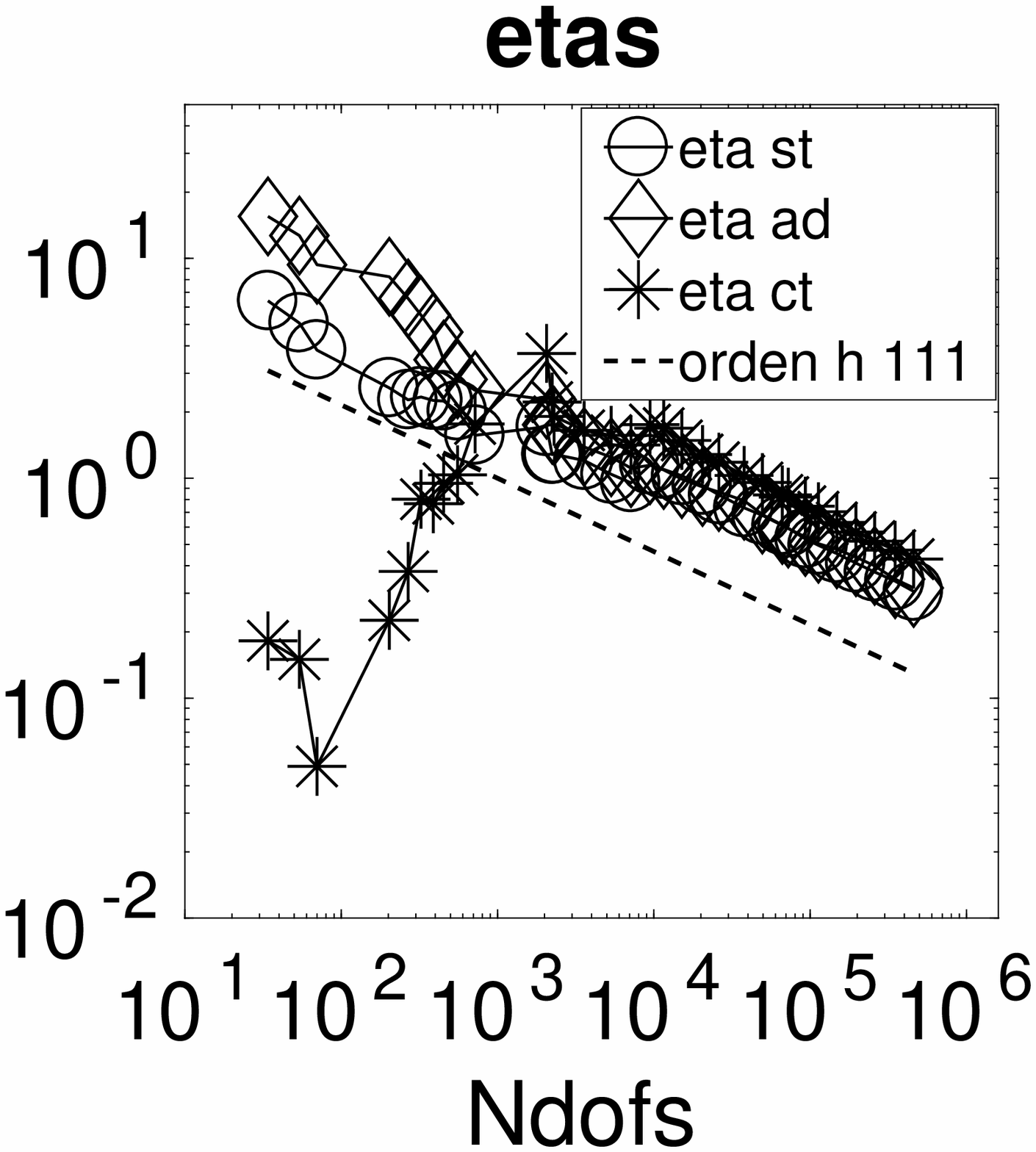}\\
\hspace{0.0cm}\tiny{(A.2)}\\
\includegraphics[trim={0 0 0 0},clip,width=3.4cm,height=3.4cm,scale=0.3]{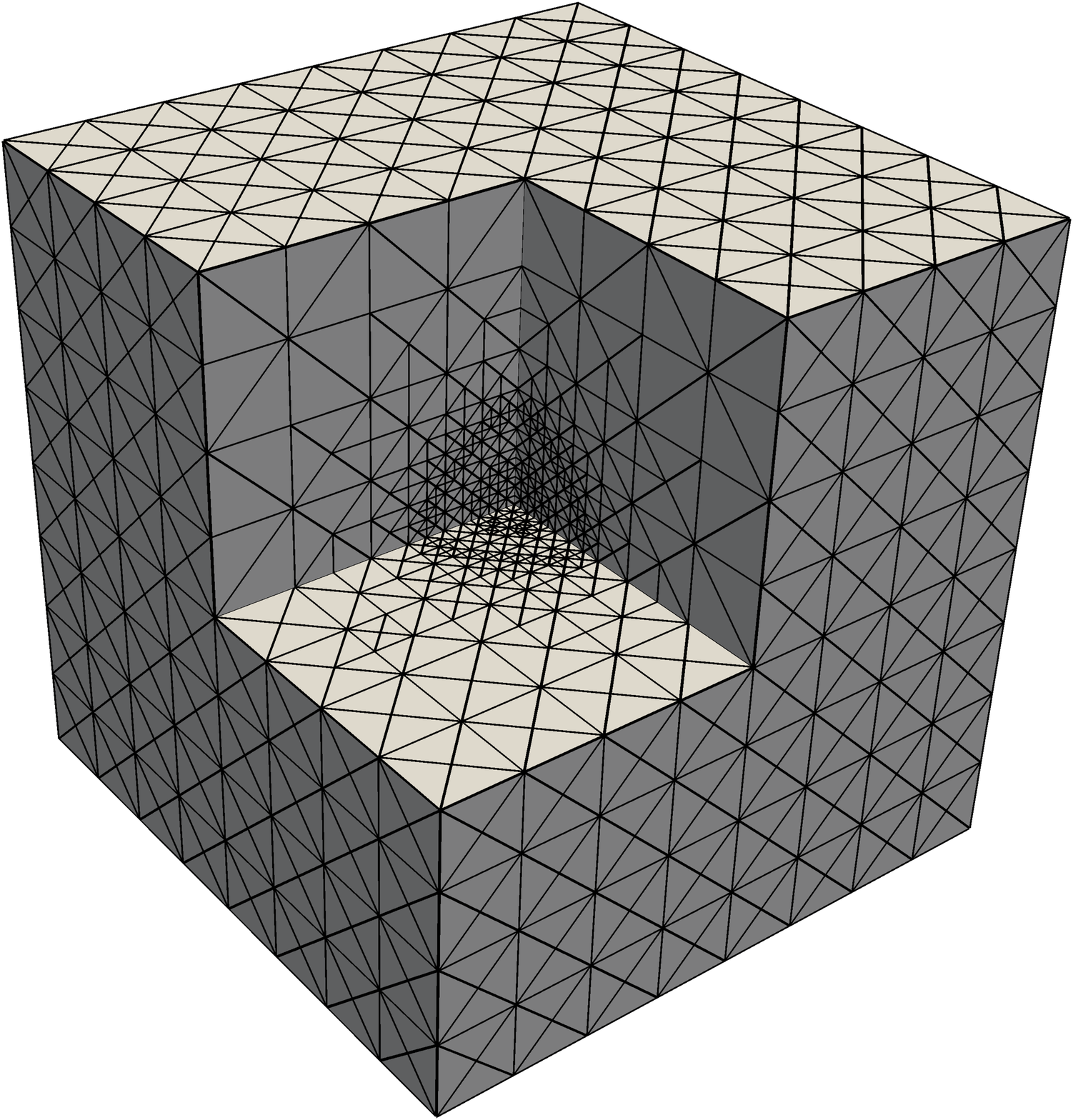}\\
\hspace{0.0cm}\tiny{(B.2)}\\
\end{minipage}
\begin{minipage}[c]{0.333\textwidth}\centering
$a = a_{3}$\\
\psfrag{etas}{\hspace{-1.7cm}\large{Estimator contributions}}
\includegraphics[trim={0 0 0 0},clip,width=4.35cm,height=3.8cm,scale=0.33]{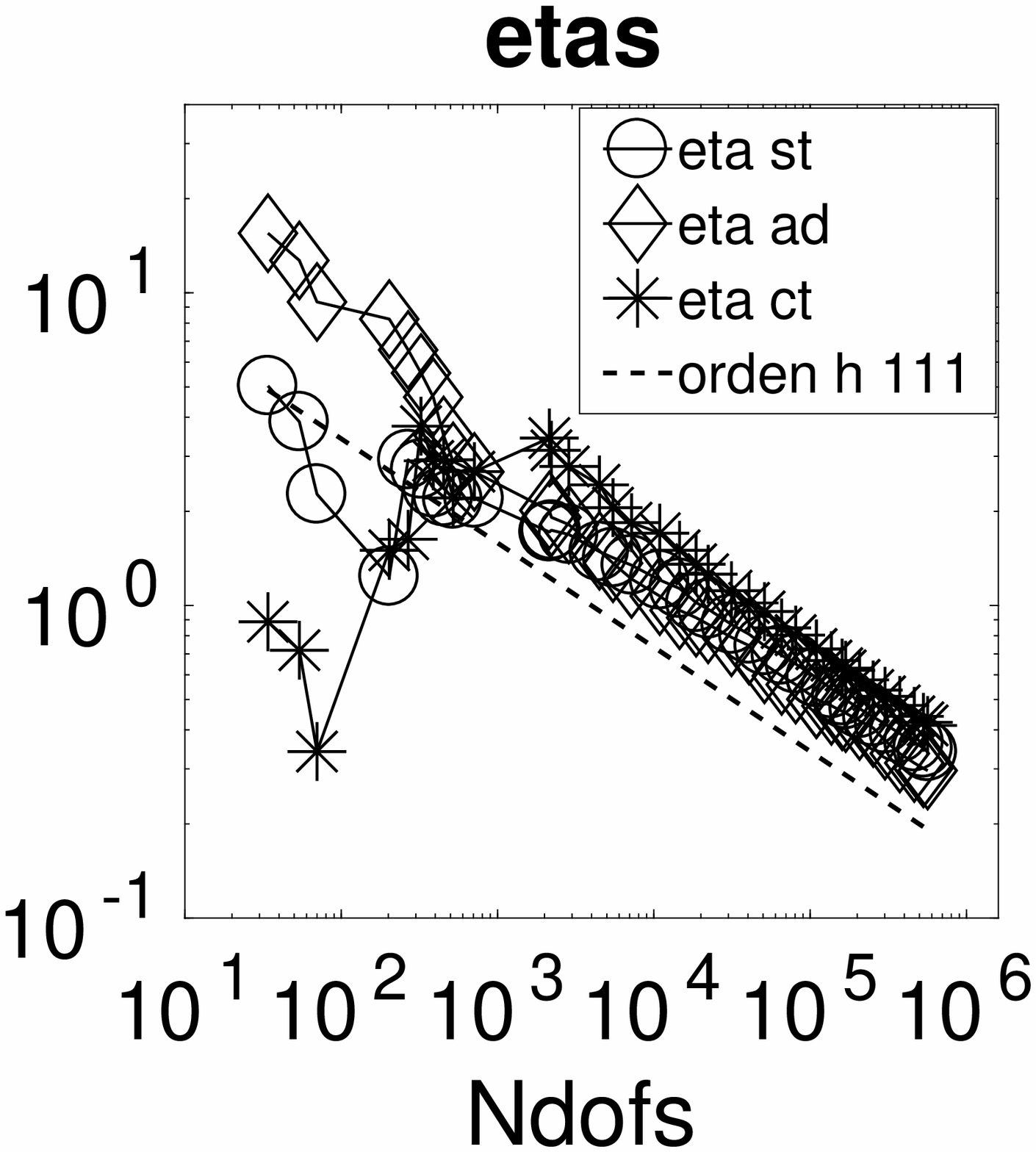}\\
\hspace{0.0cm}\tiny{(A.3)}\\
\includegraphics[trim={0 0 0 0},clip,width=3.4cm,height=3.4cm,scale=0.3]{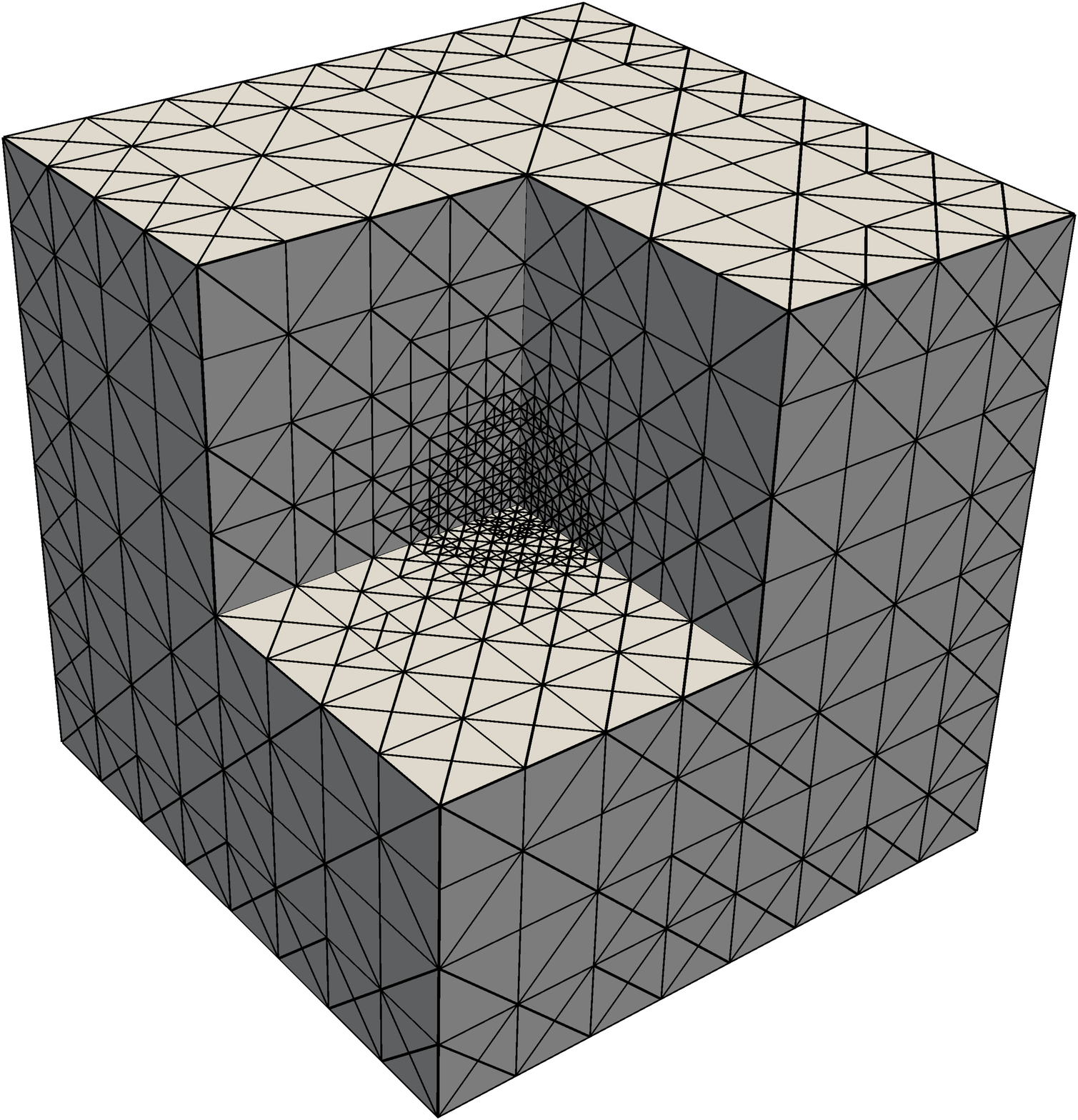}\\
\hspace{0.0cm}\tiny{(B.3)}\\
\end{minipage}
\caption{Example 2: Experimental rates of convergence for $\mathcal{E}_{st}$, $\mathcal{E}_{ad}$, and $\mathcal{E}_{ct}$ (A.1)--(A.3) and adaptively refined meshes obtained after 25 adaptive loops (B.1)--(B.3) with $\nu = 10^{-3}$. }
\label{fig:ex_2}
\end{figure}

\subsection{Conclusions}

We present the following conclusions:
~\\
$\bullet$ Most of the refinement occurs near to the interface of the control variable. This attests to the efficiency of the devised estimator. When the domain involves geometric singularities, refinement is also being performed in regions that are close to them. This shows a competitive performance of the a posteriori error estimator.
~\\
$\bullet$ All the individual contributions of the total error $\VERT e\VERT_\Omega$ exhibit optimal experimental rates of convergence for all the experiments and the nonlinear functions $a$ considered in the experiments that we have performed.
~\\
$\bullet$ The devised a posteriori error estimator, defined in \eqref{def:error_estimator_ocp}, is able to recognize the interface of $\bar{u}_{\T}$. This estimator also delivers, for all the numerical experiments that we have performed, optimal experimental rates of convergence. This is not the case when the error estimator \eqref{eq:chinese_total_indicator} is used in \textbf{Algorithm} \ref{Algorithm1}.


\bibliographystyle{siam}
\footnotesize
\bibliography{biblio}

\begin{thebibliography}{10}

\bibitem{MR1885308}
{\sc M.~Ainsworth and J.~T. Oden}, {\em A posteriori error estimation in finite
  element analysis}, Pure and Applied Mathematics (New York),
  Wiley-Interscience [John Wiley \& Sons], New York, 2000.

\bibitem{10.1093/imanum/drz025}
{\sc A.~Allendes, F.~Fuica, and E.~Ot\'arola}, {\em {Adaptive finite element
  methods for sparse PDE-constrained optimization}}, IMA J. Numer. Anal.,
  (2019).
\newblock https://doi.org/10.1093/imanum/drz025.

\bibitem{MUMPS1}
{\sc P.~Amestoy, I.~Duff, and J.-Y. L'Excellent}, {\em Multifrontal parallel
  distributed symmetric and unsymmetric solvers}, Computer Methods in Applied
  Mechanics and Engineering, 184 (2000), pp.~501 -- 520.

\bibitem{MUMPS2}
{\sc P.~R. Amestoy, I.~S. Duff, J.-Y. L'Excellent, and J.~Koster}, {\em A fully
  asynchronous multifrontal solver using distributed dynamic scheduling}, SIAM
  J. Matrix Anal. Appl., 23 (2001), pp.~15--41 (electronic).

\bibitem{MR1937089}
{\sc N.~Arada, E.~Casas, and F.~Tr\"{o}ltzsch}, {\em Error estimates for the
  numerical approximation of a semilinear elliptic control problem}, Comput.
  Optim. Appl., 23 (2002), pp.~201--229.

\bibitem{MR1817388}
{\sc K.~Atkinson and W.~Han}, {\em Theoretical numerical analysis}, vol.~39 of
  Texts in Applied Mathematics, Springer-Verlag, New York, 2001.
\newblock A functional analysis framework.

\bibitem{MR1780911}
{\sc R.~Becker, H.~Kapp, and R.~Rannacher}, {\em Adaptive finite element
  methods for optimal control of partial differential equations: basic
  concept}, SIAM J. Control Optim., 39 (2000), pp.~113--132.

\bibitem{MR2556843}
{\sc O.~Benedix and B.~Vexler}, {\em A posteriori error estimation and
  adaptivity for elliptic optimal control problems with state constraints},
  Comput. Optim. Appl., 44 (2009), pp.~3--25.

\bibitem{MR1756264}
{\sc J.~F. Bonnans and A.~Shapiro}, {\em Perturbation analysis of optimization
  problems}, Springer Series in Operations Research, Springer-Verlag, New York,
  2000.

\bibitem{MR2373954}
{\sc S.~C. Brenner and L.~R. Scott}, {\em The mathematical theory of finite
  element methods}, vol.~15 of Texts in Applied Mathematics, Springer, New
  York, third~ed., 2008.

\bibitem{MR2350349}
{\sc E.~Casas}, {\em Using piecewise linear functions in the numerical
  approximation of semilinear elliptic control problems}, Adv. Comput. Math.,
  26 (2007), pp.~137--153.

\bibitem{MR3023751}
{\sc E.~Casas, R.~Herzog, and G.~Wachsmuth}, {\em Optimality conditions and
  error analysis of semilinear elliptic control problems with {$L^1$} cost
  functional}, SIAM J. Optim., 22 (2012), pp.~795--820.

\bibitem{MR2009948}
{\sc E.~Casas and M.~Mateos}, {\em Uniform convergence of the {FEM}.
  {A}pplications to state constrained control problems}, vol.~21, 2002,
  pp.~67--100.
\newblock Special issue in memory of Jacques-Louis Lions.

\bibitem{MR3586845}
\leavevmode\vrule height 2pt depth -1.6pt width 23pt, {\em Optimal control of
  partial differential equations}, in Computational mathematics, numerical
  analysis and applications, vol.~13 of SEMA SIMAI Springer Ser., Springer,
  Cham, 2017, pp.~3--59.

\bibitem{MR2150243}
{\sc E.~Casas, M.~Mateos, and F.~Tr\"{o}ltzsch}, {\em Error estimates for the
  numerical approximation of boundary semilinear elliptic control problems},
  Comput. Optim. Appl., 31 (2005), pp.~193--219.

\bibitem{MR2272157}
{\sc E.~Casas and J.-P. Raymond}, {\em Error estimates for the numerical
  approximation of {D}irichlet boundary control for semilinear elliptic
  equations}, SIAM J. Control Optim., 45 (2006), pp.~1586--1611.

\bibitem{MR2902693}
{\sc E.~Casas and F.~Tr\"{o}ltzsch}, {\em Second order analysis for optimal
  control problems: improving results expected from abstract theory}, SIAM J.
  Optim., 22 (2012), pp.~261--279.

\bibitem{MR0520174}
{\sc P.~G. Ciarlet}, {\em The finite element method for elliptic problems},
  North-Holland Publishing Co., Amsterdam-New York-Oxford, 1978.
\newblock Studies in Mathematics and its Applications, Vol. 4.

\bibitem{MR2421327}
{\sc M.~Hinterm\"{u}ller and R.~H.~W. Hoppe}, {\em Goal-oriented adaptivity in
  control constrained optimal control of partial differential equations}, SIAM
  J. Control Optim., 47 (2008), pp.~1721--1743.

\bibitem{MR2434065}
{\sc M.~Hinterm\"{u}ller, R.~H.~W. Hoppe, Y.~Iliash, and M.~Kieweg}, {\em An a
  posteriori error analysis of adaptive finite element methods for distributed
  elliptic control problems with control constraints}, ESAIM Control Optim.
  Calc. Var., 14 (2008), pp.~540--560.

\bibitem{MR1786735}
{\sc D.~Kinderlehrer and G.~Stampacchia}, {\em An introduction to variational
  inequalities and their applications}, vol.~31 of Classics in Applied
  Mathematics, Society for Industrial and Applied Mathematics (SIAM),
  Philadelphia, PA, 2000.
\newblock Reprint of the 1980 original.

\bibitem{MR3212590}
{\sc K.~Kohls, A.~R\"{o}sch, and K.~G. Siebert}, {\em A posteriori error
  analysis of optimal control problems with control constraints}, SIAM J.
  Control Optim., 52 (2014), pp.~1832--1861.

\bibitem{MR1887737}
{\sc W.~Liu and N.~Yan}, {\em A posteriori error estimates for distributed
  convex optimal control problems}, vol.~15, 2001, pp.~285--309 (2002).
\newblock A posteriori error estimation and adaptive computational methods.

\bibitem{MR2024491}
\leavevmode\vrule height 2pt depth -1.6pt width 23pt, {\em A posteriori error
  estimates for control problems governed by nonlinear elliptic equations},
  vol.~47, 2003, pp.~173--187.
\newblock 2nd International Workshop on Numerical Linear Algebra, Numerical
  Methods for Partial Differential Equations and Optimization (Curitiba, 2001).

\bibitem{MR3076038}
{\sc R.~H. Nochetto and A.~Veeser}, {\em Primer of adaptive finite element
  methods}, in Multiscale and adaptivity: modeling, numerics and applications,
  vol.~2040 of Lecture Notes in Math., Springer, Heidelberg, 2012,
  pp.~125--225.

\bibitem{MR3014456}
{\sc T.~Roub\'{\i}\v{c}ek}, {\em Nonlinear partial differential equations with
  applications}, vol.~153 of International Series of Numerical Mathematics,
  Birkh\"{a}user/Springer Basel AG, Basel, second~ed., 2013.

\bibitem{MR192177}
{\sc G.~Stampacchia}, {\em Le probl\`eme de {D}irichlet pour les \'{e}quations
  elliptiques du second ordre \`a coefficients discontinus}, Ann. Inst. Fourier
  (Grenoble), 15 (1965), pp.~189--258.

\bibitem{Troltzsch}
{\sc F.~Tr\"oltzsch}, {\em Optimal control of partial differential equations},
  vol.~112 of Graduate Studies in Mathematics, American Mathematical Society,
  Providence, RI, 2010.
\newblock Theory, methods and applications, Translated from the 2005 German
  original by J\"urgen Sprekels.

\bibitem{MR3059294}
{\sc R.~Verf\"{u}rth}, {\em A posteriori error estimation techniques for finite
  element methods}, Numerical Mathematics and Scientific Computation, Oxford
  University Press, Oxford, 2013.

\bibitem{MR2373479}
{\sc B.~Vexler and W.~Wollner}, {\em Adaptive finite elements for elliptic
  optimization problems with control constraints}, SIAM J. Control Optim., 47
  (2008), pp.~509--534.

\bibitem{MR2826983}
{\sc G.~Wachsmuth and D.~Wachsmuth}, {\em Convergence and regularization
  results for optimal control problems with sparsity functional}, ESAIM Control
  Optim. Calc. Var., 17 (2011), pp.~858--886.

\bibitem{MR1033498}
{\sc E.~Zeidler}, {\em Nonlinear functional analysis and its applications.
  {II}/{B}}, Springer-Verlag, New York, 1990.
\newblock Nonlinear monotone operators, Translated from the German by the
  author and Leo F. Boron.

\end{thebibliography}

\end{document}